\newtheorem*{claim*}{Claim}
\newtheorem{theorem}{Theorem}[section]
\newtheorem{lemma}[theorem]{Lemma}
\newtheorem{corollary}[theorem]{Corollary}
\newtheorem{proposition}[theorem]{Proposition}
\newtheorem{assumption}[theorem]{Assumption}
\newtheorem{algorithm}[theorem]{Algorithm}
\theoremstyle{definition}
\newtheorem{definition}[theorem]{Definition}
\newtheorem{example}[theorem]{Example}
\newtheorem{property}[theorem]{Property}
\newtheorem{question}[theorem]{Question}
\theoremstyle{remark}
\newtheorem{remark}[theorem]{Remark}
\newtheorem{fact}[theorem]{Fact}
\numberwithin{equation}{section}
\definecolor{gray}{rgb}{.5,.5,.5}
\definecolor{black}{rgb}{0,0,0}
\definecolor{blue}{rgb}{0,0,1}
\def\blue{\color{blue}}
\definecolor{red}{rgb}{1,0,0}
\def\red{\color{red}}
\definecolor{green}{rgb}{0,1,0}
\definecolor{yellow}{rgb}{1,1,.4}
\newcommand{\etalchar}[1]{$^{#1}$}
\def\@tocline#1#2#3#4#5#6#7{\relax
  \ifnum #1>\c@tocdepth 
  \else
    \par \addpenalty\@secpenalty\addvspace{#2}%
    \begingroup \hyphenpenalty\@M
    \@ifempty{#4}{%
      \@tempdima\csname r@tocindent\number#1\endcsname\relax
    }{%
      \@tempdima#4\relax
    }%
    \parindent\z@ \leftskip#3\relax \advance\leftskip\@tempdima\relax
    \rightskip\@pnumwidth plus4em \parfillskip-\@pnumwidth
    #5\leavevmode\hskip-\@tempdima
      \ifcase #1
       \or\or \hskip 1em \or \hskip 2em \else \hskip 3em \fi%
      #6\nobreak\relax
    \dotfill\hbox to\@pnumwidth{\@tocpagenum{#7}}\par
    \nobreak
    \endgroup
  \fi}
\DeclareMathAlphabet{\mathpzc}{OT1}{pzc}{m}{it}
\begin{document}

\title{Moduli spaces of ten-line arrangements with double and triple points}

\author{Meirav Amram}
\address{Department of Mathematics and Computer Science, Bar-Ilan University, Ramat-Gan, 52900, Israel\newline and Shamoon College of Engineering,
Bialik/Basel Sts., Beer-Sheva 84100, Israel}
\email{meirav@macs.biu.ac.il, meiravt@sce.ac.il}

\author{Moshe Cohen}
\address{Department of Mathematics and Computer Science, Bar-Ilan University, Ramat Gan 52900, Israel}
\email{cohenm10@macs.biu.ac.il}

\author{Mina Teicher}
\address{Department of Mathematics and Computer Science, Bar-Ilan University, Ramat Gan 52900, Israel}
\email{teicher@macs.biu.ac.il}

\author{Fei Ye}
\address{Department of Mathematics, The University of Hong Kong, Hong Kong}
\email{fye@maths.hku.hk}

\date{\today}

\begin{abstract}
Two arrangements with the same combinatorial intersection lattice but whose complements have different fundamental groups are called a Zariski pair.  This work finds that there are at most nine such pairs amongst all ten line arrangements whose intersection points are doubles or triples. 
 This result is obtained by considering the moduli space of a given configuration table which describes the intersection lattice.  A complete combinatorial classification is given of all arrangements of this type under a suitable assumption, producing a list of  seventy-one described in a table, 
 most of which do not explicitly appear in the literature.  This list also includes other important counterexamples:  nine combinatorial arrangements that are not geometrically realizable.
\end{abstract}


\keywords{matroid, oriented matroid, pseudoline arrangement}
\subjclass[2000]{14N20, 52C35, 52C40, 05B35}

\maketitle

\tableofcontents

 

\textbf{Acknowledgements.} 
  This research was partially supported by the Minerva Foundation of Germany through the Emmy Noether Institute.  The second author was partially supported by the Oswald Veblen Fund of the Institute of Advanced Study in Princeton.

\section{Introduction}
\label{sec:Intro}



  A {\it line arrangement} $\mathcal{A}$ 
in $\mathbb{CP}^2$ is a finite collection of projective lines. We define the complement of $\mathcal{A}$ as $\mathbb{CP}^2\setminus \bigcup_{L\in \mathcal{A}}L$ and denote it as $M(\mathcal{A})$. The set $L(\mathcal{A})=\{\bigcap_{i\in S}L_i | S\subseteq\{1, 2, \dots, k\}\}$ partially ordered by reverse inclusion is called the {\em intersection lattice} of $\mathcal{A}$. A set with such an intersection lattice structure is often called a \emph{configuration}. 
 Two line arrangements $\mathcal{A}_1$ and $\mathcal{A}_2$ are lattice isomorphic, denoted as $\mathcal{A}_1\sim \mathcal{A}_2$,  if up to a permutation on the labels of the lines their lattices are the same. See the textbook \cite{OrlTer} by Orlik and Terao for a brief introduction to the history and general theory of arrangements of hyperplanes.


In 1980, Orlik and Solomon \cite{OrlSol} proved that the cohomology
algebra of $M(\mathcal{A})$ is determined by the combinatorics of $L(\mathcal{A})$. This led to a ``conjecture'' that homotopy invariants of $M(\mathcal{A})$ are combinatorial invariants.

Conversely,  Falk studied  whether $L(\mathcal{A})$ is a homotopy invariant. In \cite{Falk}, he presented a pair of central arrangements
in $\mathbb{C}^3$ with different underlying lattices but homotopy equivalent complements. However, for line arrangements, Jiang and Yau \cite{JiaYau93} showed that homeomorphic equivalence implies lattice isomorphism.  
Towards the ``conjecture,'' it was  proved for line arrangements by Jiang and Yau \cite{JiaYau93} and for hyperplane arrangement by 
Randell \cite{Ran} independently that if two arrangements are lattice isotopic, i.e. they are connected by a one-parameter family with constant intersection lattice, then their complements are diffeomorphic.  
  As applications,  Jiang, Wang, and Yau \cite{JiaYau94}, \cite{Wang} showed that the intersection lattices of {\em nice arrangements} and {\em simple arrangements} determine the topology of the complements. Nazir and Yoshinaga \cite{NazYos} define {\em simple $C_3$} line arrangements and show that their intersection lattices determine the topology of their complements. 

However, the ``conjecture'' is not true in general. In 1998, Rybnikov \cite{Ryb} discovered a pair of two complex arrangements of thirteen lines with fifteen triple points. The arrangements are lattice isomorphic but the fundamental groups of complements are different. We call a pair of lattice isomorphic line arrangements a {\em Zariski pair} if the fundamental groups of complements are different.  Our definition is stronger than the original definition, introduced by Artal Bartolo in \cite{AB}, which is a pair of lattice isomorphic line arrangements with different embedding type.  Rybnikov's example is the first and smallest Zariski pair so far. It is not known if there is a Zariski pair of arrangements of less than thirteen lines. It is not known if there is a Zariski pair of real line arrangements.

By studying fundamental groups, Garber, Teicher and Vishne \cite{GTV:8} proved that there is no Zariski pair of arrangements of up to eight real lines which covered a result of Fan \cite{Fan} on arrangements of six lines.

By studying moduli spaces, Nazir and Yoshinaga \cite{NazYos} proved that there is no Zariski pair of arrangements of up to eight complex lines and listed a classification of arrangements of nine lines without proof (later proved to be complete by Ye in \cite{Fei9}). The classification implies that there is also no Zariski pair of arrangements of nine lines. 

\subsection{The moduli space of an arrangement with fixed intersection lattice}
\label{subsec:geom}

The earlier program classified the moduli spaces of arrangements of nine lines \cite{Fei9}, followed by arrangements of ten lines with some multiple points of order greater than three \cite{Fei:10}.  The goal of this work is to classify the moduli spaces of arrangements of ten lines with only triple points and double points. One purpose of the classification is to search for Zariski pairs  from arrangements of ten lines.  
 
Comparing fundamental groups of complements of line arrangements is very hard. Rybnikov \cite{Ryb} distinguished the two fundamental groups of his pair of arrangements by delicate analysis of the lower central series quotients of the groups.  Following Rybnikov's idea, Artal Bartolo, Carmona Ruber, Cogolludo Agust{\'{\i}}n, and Marco Buzun{\'a}riz present in \cite{ABC2}  an alternative proof in detail by using the Alexander module and its truncations as well as certain combinatorial invariants developed only for line arrangements with only double and triple points.  We hope  that their method can be applied to the arrangements of ten lines with only double and triple points that we produce in this present paper.

 Let  $\mathcal{A}$ be  a complex line arrangement.  We define the {\em moduli space} of line arrangements with the fixed lattice $L(\mathcal{A})$ (or simply, the moduli space of $\mathcal{A}$) as
\[\mathcal{M}_\mathcal{A}=\{\mathcal{B}\in ((\mathbb{CP}^2)^*)^n | \mathcal{B}\sim \mathcal{A})\}/ PGL(3, \mathbb{C}).\]

By Randell's Lattice-Isotopy Theorem in \cite{Ran} and Cohen and Suciu's Theorem 3.9 in  \cite{CoSuc}, we know that two arrangements in the same connected component of the moduli space, or in two complex conjugate components, respectively, are not Zasiki pairs.  See more in Section \ref{subsec:linemethod}.

One very useful result on structure of moduli spaces of line arrangements is the following lemma of Nazir and Yoshinaga.

\begin{lemma}\cite[Lemma 2.4]{NazYos}
\label{lemma:extend-irreduciblity}
Let $\mathcal{A}=\{L_1,L_2,\dots, L_n\}$ be a line arrangement in $\mathbb{CP}^2$. Assume that the line $L_n$ passes through \textbf{at most two} multiple points of the arrangement $\mathcal{A}$. Set $\mathcal{A}'=\mathcal{A}\setminus\{L_n\}$. Then the moduli space $\mathcal{M}_{\mathcal{A}}$ is irreducible if $\mathcal{M}_{\mathcal{A}'}$ is irreducible.
\end{lemma}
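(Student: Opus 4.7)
The plan is to exhibit $\mathcal{M}_\mathcal{A}$ as fibered over $\mathcal{M}_{\mathcal{A}'}$ via the forgetful map that drops $L_n$, and then to invoke the standard fact that a dominant morphism of varieties with irreducible base and irreducible equidimensional fibers has irreducible source.

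More precisely, I would first lift the picture to the ordered arrangement spaces $W_\mathcal{A}, W_{\mathcal{A}'} \subset ((\mathbb{CP}^2)^*)^n$ before quotienting by $PGL(3, \mathbb{C})$. Since $PGL(3, \mathbb{C})$ is connected and irreducible, $W_{\mathcal{A}'}$ inherits irreducibility from $\mathcal{M}_{\mathcal{A}'}$ (the preimage of an irreducible variety under a geometric quotient by a connected group is irreducible), and symmetrically $\mathcal{M}_\mathcal{A}$ will be irreducible as soon as $W_\mathcal{A}$ is. The $PGL(3, \mathbb{C})$-equivariant forgetful map $\tilde\pi : W_\mathcal{A} \to W_{\mathcal{A}'}$, $(L_1, \ldots, L_n) \mapsto (L_1, \ldots, L_{n-1})$, is well defined because removing a labeled line from any arrangement lattice-isomorphic to $\mathcal{A}$ produces one lattice-isomorphic to $\mathcal{A}'$.

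The heart of the argument is to analyze the fiber of $\tilde\pi$ over a fixed $(L_1, \ldots, L_{n-1}) \in W_{\mathcal{A}'}$ according to the number $k \in \{0, 1, 2\}$ of multiple points of $\mathcal{A}$ through which $L_n$ must pass. When $k = 2$, the lattice $L(\mathcal{A})$ specifies two intersection points of $(L_1, \ldots, L_{n-1})$ through which $L_n$ must pass, so $L_n$ is uniquely determined and the fiber is a point. When $k = 1$, the line $L_n$ must lie in the pencil $\mathbb{P}^1$ through the distinguished multiple point, and the remaining combinatorial requirements (that $L_n$ meet each $L_i$ in a double point of $\mathcal{A}$ and avoid all other multiple points of $(L_1, \ldots, L_{n-1})$) are Zariski-open, cutting out an irreducible open subvariety of that $\mathbb{P}^1$. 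When $k = 0$, the fiber is the Zariski-open subset of $(\mathbb{CP}^2)^*$ defined by the analogous open conditions, irreducible of dimension $2$. In each case the fiber is irreducible of dimension $2 - k$, so the fibers are equidimensional on the image.

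Combining these facts, $W_\mathcal{A}$ is irreducible, and hence so is $\mathcal{M}_\mathcal{A}$. The main obstacle in executing the plan is the fiber bookkeeping: one must verify that in the cases $k = 0, 1$ the open conditions cut out a nonempty subset of $\mathbb{P}^2$ or $\mathbb{P}^1$ over every point of the image, so that no special component of $W_{\mathcal{A}'}$ is missed, and one must handle nongeneric stabilizers under the $PGL(3,\mathbb{C})$ action when passing to the quotient. The hypothesis that $L_n$ passes through at most two multiple points is essential in exactly this step: a third required multiple point would generically overdetermine $L_n$ and replace the clean irreducible fiber above by a higher-codimension locus inside $(\mathbb{CP}^2)^*$ for which irreducibility would no longer be automatic.
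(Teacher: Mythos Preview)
The paper does not actually prove this lemma; it is quoted verbatim from Nazir--Yoshinaga \cite[Lemma~2.4]{NazYos} and used as a black box. So there is no ``paper's own proof'' to compare against.

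That said, your sketch is the standard argument and is essentially what Nazir--Yoshinaga do: exhibit the forgetful map and observe that its fibers are Zariski-open subsets of a point, a $\mathbb{P}^1$, or a $(\mathbb{CP}^2)^*$ according to whether $L_n$ is constrained to pass through two, one, or zero prescribed multiple points. The caveats you flag (nonemptiness of the open fiber conditions, passage through the $PGL(3,\mathbb{C})$ quotient) are the real details to check, but none is an obstruction. One small refinement worth making explicit: $\tilde\pi$ need not be surjective onto $W_{\mathcal{A}'}$ (for instance, in the $k=2$ case the unique candidate line through the two prescribed points might accidentally hit a third multiple point of $\mathcal{A}'$), but its image is a nonempty Zariski-open subset of the irreducible $W_{\mathcal{A}'}$ and hence itself irreducible, which is all the fiberwise argument requires.
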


With the support of this lemma, we make the following reasonable assumption throughout, except for the general results in Sections \ref{sec:Background} and \ref{subsec:linemethod}.

\begin{assumption}
\label{ass:reduce}
Let $\mathcal{A}$ be a line arrangement with only double and triple points such that each line passes through at least \textbf{three} triple points.
\end{assumption}



Departing from the previous work, the techniques presented in this paper prioritize the points instead of the lines.  This perspective comes from matroid theory, which due to its more combinatorial nature serves as a better model for enumeration.

\subsection{The matroid perpective}
\label{subsec:matroid}

A hyperplane arrangement can be realized in more combinatorial language as an \textit{oriented matroid}, an object well-documented in the textbook \cite{OM} by Bj{\"o}rner, Las Vergnas, Sturmfels, White, and Ziegler.

  We investigate an arrangement of interest above by looking at its \textit{underlying matroid}.  Matroids, now stripped of their topology and appearing as purely combinatorial objects, are also heavily documented in the literature, with special introductory care taken in the textbook \cite{Oxley} by Oxley.

A \textit{matroid} is the simultaneous generalization of a matrix and a graph:  the common property is the notion of dependence, achieved by linear independence of column vectors in a matrix or by cycles of edges in a graph.  
A matroid obtained from a matrix (whose entries belong to some field) is called \emph{representable} (over this field).  
 
 The notion of representability over the field $\mathbb{R}$ or $\mathbb{C}$ corresponds to the notion of geometrically realizable in the setting above over $\mathbb{R}$ or $\mathbb{C}$, respectively.

Thus the arrangements produced in the work below may be used to find 
 \emph{forbidden minors} (also called \emph{excluded minors} or \emph{minor-minimal obstructions}) studied in infinite classes of matroids.  
%
Although it is most natural to ask about forbidden minors for the class of $\mathbb{R}$-representable matroids, this list is infinite.  Mayhew, Newman, and Whittle \cite{MNW} recently showed that for any infinite field $\mathbb{K}$ and any $\mathbb{K}$-representable matroid $N$, there is an excluded minor in $\mathbb{K}$ that has $N$ as a minor.

However, as our list produces new, relatively larger matroids whose base sets have between nineteen and twenty-five elements, they might also be useful for the study of forbidden minors of finite fields, another topic of popular interest.

A \emph{geometric matroid} represents elements from its base set by points, showing dependency geometrically:  a circuit of size three by a line through three points, a circuit of size four by a plane through four points, etc.  This is the setting that will be most useful for us below, as it naturally gives rise to line arrangements.

\begin{remark}
Assumption \ref{ass:reduce} is the right choice from the matroid perspective, as well.  Every two distinct points in a geometric matroid are independent, but the lines between them are omitted to avoid confusion.   The lines included are ones that give dependencies and so must contain at least three points.
\end{remark}

In order to describe the matroids in our investigation one would have to enumerate all of the intersection points.  However in this context we only enumerate the triple points.  For this approach we use \emph{configuration tables} following the textbook \emph{Configurations of points and lines} by Gr\"unbaum \cite{Grun}.   Examples of configuration tables can be found in Table \ref{tab:93} and throughout the paper.





A configuration table is \emph{geometrically realizable} if it is the configuration table associated to a geometrically realizable line arrangement. In Section \ref{subsec:linemethod}, we illustrate how to determine whether a given configuration table is realizable.

\subsection{Results and organization}
\label{subsec:results}

The two main results are the classification of ten-line arrangements that satisfy Assumption \ref{ass:reduce} with only double and triple points given in Theorem \ref{thm:classification} and Table \ref{tab:summaryALT} and also the discovery of nine potential Zariski pairs from this classification based on the moduli spaces of the arrangements given in Theorem \ref{thm:Zariskipairs} and Table \ref{tab:finallist}.

Aside from these, other important counterexamples are given:  nine non-geometrically realizable arrangements in Theorem \ref{thm:nongeom} and Table \ref{tab:nongeom}. 

This work is important for its own sake, as a complete list of such large arrangements cannot be found in the literature; but furthermore it will be instrumental in answering the question of whether there exist other small Zariski pairs of arrangements.

The paper is structured as follows.  The combinatorics is discussed in Section \ref{sec:Background}, giving restrictions that will be used as lemmas throughout the main proofs.   The geometric methodology is outlined in Section \ref{subsec:linemethod}, highlighting intermediate steps and results that are part of Algorithm \ref{algorithm} used to classify the moduli spaces of the arrangements.  Some examples are given that demonstrate the procedures we use.

In Section \ref{sec:9} earlier results are discussed for arrangements of nine lines with nine and ten triples.  These appear as reductions in later proofs of our arrangements of ten lines.

Our main results are outlined in Section \ref{sec:Main}, with Theorem \ref{thm:classification} giving a classification of our arrangements and Theorem \ref{thm:Zariskipairs} giving nine potential Zariski pairs.

The outline of the proof is then explained in Subsection \ref{subsec:proofs}.  The proof is presented in four parts:  Sections \ref{sec:10}, \ref{sec:13}, \ref{sec:12}, \ref{sec:11} for ten, thirteen, twelve, and eleven triples, respectively.  These are ordered based on the relative straightforwardness of the proofs, with the final section on eleven lines being the most fragmented via casework.

%
%
%
%
%

%

\section{The combinatorial data}
\label{sec:Background}


For the uninitiated reader, we review some background that sets up the proof of our classification Theorem \ref{thm:classification} and Table \ref{tab:summaryALT}.  We then introduce some basic arrangement results and intuitions that will be used as lemmas in the main proofs below.


Let $\mathcal{A}$ be an arrangement of ten lines, and let $n_i$ be the number of intersection points of order $i$.  We assume throughout that $n_i=0$ for $i>3$, as the other cases have been handled already by the work of \cite{Fei:10}, where $n_4\neq0$.

\begin{lemma}
\label{lemma:basic} Assume that $\mathcal{A}$ satisfies Assumption \ref{ass:reduce}.
Let $\ell_i$ be the number of lines with exactly $i$ triple points on it.  Then we have
\begin{eqnarray}
\label{eqn:basic1}
\ell_3+\ell_4 & = & 10\\
\label{eqn:basic2}
3\ell_3+4\ell_4 & = & 3n_3,
\end{eqnarray}
yielding the results of Table \ref{tab:triplenumbers}.
\begin{table}[htbp]
\begin{tabular}{|c|c|c|c|c|c|}
\hline
$n_3$ & $\#$ of triples & 10 & 11 & 12 & 13 \\
\hline
\hline
$\ell_3$ & $\#$ of lines with three triples & 10 & 7 & 4 & 1 \\
\hline
$\ell_4$ & $\#$ of lines with four triples & 0 & 3 & 6 & 9 \\
\hline 
\end{tabular}
\caption{The four cases for the number of triple points in a ten-line arrangement.}
\label{tab:triplenumbers}
\end{table}
\end{lemma}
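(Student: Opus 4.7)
The plan is to first pin down the range of triple points per line, then derive the two equations by partitioning and by double-counting, and finally solve the $2\times 2$ linear system for each admissible value of $n_3$.

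\textbf{Upper bound on triples per line.} Fix a line $L\in\mathcal{A}$ and suppose it carries $k$ triple points $P_1,\dots,P_k$. At each $P_j$ two other lines of $\mathcal{A}$ pass through, giving a set $S_j$ of size $2$. I claim the $S_j$ are pairwise disjoint: if some line $L'\neq L$ belonged to both $S_i$ and $S_j$, then $L$ and $L'$ would meet both at $P_i$ and at $P_j$, contradicting that two distinct projective lines meet in a single point. Hence $L$ must meet $\bigsqcup_j S_j$, totalling $2k$ lines, inside the remaining nine lines of $\mathcal{A}$, forcing $2k\le 9$ and thus $k\le 4$. Combined with Assumption \ref{ass:reduce}, which requires $k\ge 3$, every line carries exactly $3$ or $4$ triple points; that is, $\ell_i=0$ unless $i\in\{3,4\}$.

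\textbf{The two equations.} Equation \eqref{eqn:basic1} is immediate from the partition of the ten lines into those with three and those with four triple points. For equation \eqref{eqn:basic2} I would double-count the incidences between lines and triple points: summing over lines, the total is $3\ell_3+4\ell_4$; summing over triple points, each contributes exactly three lines, giving $3n_3$. Equating the two expressions yields $3\ell_3+4\ell_4=3n_3$.

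\textbf{Solving the system and filling in Table \ref{tab:triplenumbers}.} Subtracting three times \eqref{eqn:basic1} from \eqref{eqn:basic2} gives $\ell_4=3n_3-30$, and then \eqref{eqn:basic1} gives $\ell_3=40-3n_3$. The nonnegativity constraints $\ell_3,\ell_4\ge 0$ force $10\le n_3\le 13$, with the values $n_3=10,11,12,13$ producing $(\ell_3,\ell_4)=(10,0),(7,3),(4,6),(1,9)$ respectively, exactly the columns displayed in the table. I expect no genuine obstacle here; the only subtlety is the geometric step justifying $k\le 4$, which hinges on the unique-intersection property rather than anything about triples in particular.
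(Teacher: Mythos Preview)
Your proof is correct and follows essentially the same approach as the paper: both bound the number of triples per line by observing that $k$ triple points on a fixed line $L$ require $2k$ distinct other lines, both obtain equation~\eqref{eqn:basic2} by double-counting line--triple incidences, and both rely on Assumption~\ref{ass:reduce} for the lower bound. Your write-up is simply more explicit---you spell out the disjointness of the $S_j$ and you solve the $2\times2$ system to derive the admissible range $10\le n_3\le 13$ and the table entries, whereas the paper leaves these routine steps to the reader.
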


\begin{proof}
Assumption \ref{ass:reduce} gives that $\ell_i= 0$ for $i\leq 2$. For a line to have at least five triple points on it, there must be at least ten other lines.  Because there are only ten lines total, we have $\ell_i= 0$ for $i\geq 5$.  This gives Equation \ref{eqn:basic1}.

Since each triple point is on three lines, we need a total of $3n_3$ lines \emph{counted with multiplicities} to form $n_3$ triple points. On the other hand, if a line passes through $i$ triple points, then it contributes $i$ lines.  This gives Equation \ref{eqn:basic2}.
\end{proof}

This can also be expressed as a corollary of a more general result.

\begin{lemma}
\label{lemma:eqns}
Let $\mathcal{A}$ be an arrangement of $k$ lines with at most triple points. 
Denote by $\ell_i$ the number of lines that each passes through exactly $i$ triple points. Then the following equalities hold. 
\begin{eqnarray*}
\ell_0+\ell_1+\ell_2+\ldots+\ell_k & = & k\\
\ell_1+2\ell_2+\ldots+k\ell_k & = & 3n_3,
\end{eqnarray*}
where $n_3$ is the number of triple points in the arrangement.
\end{lemma}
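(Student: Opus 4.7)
The plan is to prove both identities by straightforward double counting, exactly in the spirit of Lemma \ref{lemma:basic} but now without invoking Assumption \ref{ass:reduce}.

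First I would handle the partition identity $\sum_{i=0}^{k}\ell_i=k$. Every line of $\mathcal{A}$ contains a well-defined number of triple points, which is an integer between $0$ and $k$ (trivially at most $k$, since there are only $k$ lines in $\mathcal{A}$ and a triple point on a given line requires two further lines passing through that point). Thus the sets of lines carrying exactly $i$ triple points, for $i=0,1,\dots,k$, partition the set of all lines of $\mathcal{A}$, and counting the cells of this partition gives the first equation.

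Second, I would prove $\sum_{i=0}^{k} i\,\ell_i = 3n_3$ by counting the set
\[
I=\{(L,p)\mid L\in\mathcal{A},\ p\text{ a triple point of }\mathcal{A},\ p\in L\}
\]
in two ways. Summing over triple points $p$: since $\mathcal{A}$ has at most triple points, each triple point is the intersection of exactly three lines of $\mathcal{A}$, so $|I|=3n_3$. Summing over lines $L$: a line that carries exactly $i$ triple points contributes $i$ to $|I|$, and there are $\ell_i$ such lines, so $|I|=\sum_{i=0}^{k} i\,\ell_i$. Equating the two counts yields the second equation.

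There is no real obstacle here; the only thing worth being careful about is the hypothesis ``at most triple points,'' which is what guarantees that every intersection point appearing in $I$ contributes exactly $3$ rather than some larger number, so that the right-hand side is literally $3n_3$ and no $n_i$ with $i\ge 4$ appear. Lemma \ref{lemma:basic} then drops out as the specialization $k=10$ together with Assumption \ref{ass:reduce}, which forces $\ell_0=\ell_1=\ell_2=0$ and $\ell_i=0$ for $i\geq 5$, reducing the two identities to Equations \eqref{eqn:basic1} and \eqref{eqn:basic2}.
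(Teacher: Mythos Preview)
Your proof is correct and is exactly the standard double-counting argument one would use here. The paper in fact does not supply a separate proof of this lemma; it simply presents it as the evident generalization of the preceding Lemma~\ref{lemma:basic}, whose proof already contains precisely the incidence count you give (``each triple point is on three lines, so we need a total of $3n_3$ lines counted with multiplicities''), so your approach matches the paper's implicit argument.
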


There are two other useful formulae involving the number of lines and the number of points of different multiplicities.

\begin{fact}[see for instance Section 6 of \cite{Hirz}]
\label{lemma:intersection-formula}
Let $\mathcal{A}$ be an arrangement of $k$ projective lines. 
Then
\begin{equation}
\label{eqn:equality}
\dfrac{k(k-1)}{2}=\sum\limits_{i=2}^k\dfrac{i(i-1)n_i}{2}.
\end{equation}
\end{fact}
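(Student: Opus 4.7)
The plan is to prove the identity by double-counting the number of unordered pairs of lines in $\mathcal{A}$, using the fact that in $\mathbb{CP}^2$ any two distinct projective lines meet in exactly one point.

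First I would observe that the left-hand side $\binom{k}{2} = \tfrac{k(k-1)}{2}$ is, by definition, the number of unordered pairs of distinct lines drawn from $\mathcal{A}$. Next I would reorganize this count according to where the two lines of each pair meet: since any two distinct lines of $\mathcal{A}$ intersect in a unique point of $\mathbb{CP}^2$, every unordered pair of lines corresponds to exactly one intersection point of the arrangement, and conversely each intersection point $p$ of multiplicity $i$ (that is, a point through which exactly $i$ lines of $\mathcal{A}$ pass) accounts for exactly $\binom{i}{2} = \tfrac{i(i-1)}{2}$ such pairs.

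Summing the local contribution $\binom{i}{2}$ over all intersection points and grouping them by multiplicity yields $\sum_{i\geq 2} \tfrac{i(i-1)}{2} n_i$. Because no intersection point can have multiplicity larger than $k$, the upper limit of the sum is $k$, and multiplicities $i=0,1$ contribute nothing, so this is exactly the right-hand side of \eqref{eqn:equality}. Equating the two counts gives the stated equality.

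The argument is entirely elementary; the only subtle point to flag is the use of the projective setting, since in an affine arrangement parallel pairs of lines would not produce an intersection point and the identity would fail. Working in $\mathbb{CP}^2$ precisely ensures that the map from unordered pairs of lines to intersection points is a surjection whose fibre over a point of multiplicity $i$ has size $\binom{i}{2}$, which is the only nontrivial ingredient and is the main (small) obstacle to keep straight.
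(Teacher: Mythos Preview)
Your proof is correct: the double-counting of unordered pairs of lines, using that any two distinct lines in $\mathbb{CP}^2$ meet in a unique point, is exactly the standard argument. The paper itself does not supply a proof but merely states this as a Fact with a reference to Hirzebruch, so your argument fills in what the paper leaves implicit.
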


\begin{theorem}[Hirzebruch \cite{Hirz},  Equation (9)]
\label{lemma:Hirzebruch}
 Let $\mathcal{A}$ be an arrangement of $k$ projective lines. Assume that $n_k=n_{k-1}=n_{k-2}=0$. Then we have the following inequality
\label{ineqn:Hirzebruch}
\begin{equation}
n_2+\dfrac{3}{4}n_3\geq k+\sum_{i\geq 5}(2i-9)n_i.
\end{equation}
\end{theorem}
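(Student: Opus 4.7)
The plan is to derive this inequality from the Bogomolov--Miyaoka--Yau inequality $c_1^2 \leq 3c_2$ applied to a suitable branched abelian cover of $\mathbb{CP}^2$. The hypothesis $n_k = n_{k-1} = n_{k-2} = 0$ is exactly what is needed so that such a cover has nef and big canonical divisor, placing it in the regime where BMY applies; degenerate configurations (almost all lines through one point) would produce $K$-negative curves and must be excluded.

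First I would fix an integer $n \geq 2$ and form the Kummer cover $\pi\colon Y_n \to \mathbb{CP}^2$ of degree $n^{k-1}$ with Galois group $(\mathbb{Z}/n)^{k-1}$, totally ramified along the $k$ lines of $\mathcal{A}$ (using that $\sum L_i$ is divisible by $n$ modulo linear equivalence after choosing a suitable base). Above each intersection point of multiplicity $i \geq 2$ the surface $Y_n$ acquires a quotient singularity, which can be resolved by first blowing up on the base, so that after normalization one obtains a smooth projective surface $\tilde{Y}_n$ together with $\tilde{\pi}\colon \tilde Y_n \to \widetilde{\mathbb{CP}^2}$.

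Next I would compute the Chern numbers of $\tilde Y_n$ by the standard projection-formula bookkeeping. Both $c_1^2(\tilde Y_n)$ and $c_2(\tilde Y_n)$ come out as $n^{k-1}$ times explicit polynomial expressions in $n$ whose coefficients are linear in $k$ and in the $n_i$, with an easily described local contribution from the resolution of each $i$-fold point. I would then verify that for $n$ sufficiently large, $K_{\tilde Y_n}$ is nef under the assumption $n_k = n_{k-1} = n_{k-2} = 0$, by checking each potential $K$-negative curve (preimages of the original lines and exceptional divisors over points of each multiplicity $i$) and confirming the intersection number $K_{\tilde Y_n} \cdot C \geq 0$. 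Substituting into $c_1^2(\tilde Y_n) \leq 3 c_2(\tilde Y_n)$, dividing through by $n^{k-1}$, and letting $n \to \infty$ cancels leading powers of $n$ and leaves exactly the claimed bound
\[
n_2 + \tfrac{3}{4} n_3 \geq k + \sum_{i \geq 5}(2i-9) n_i.
\]

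The main obstacle will be the nefness verification in the third step: one has to run through all candidate curves that could violate $K \cdot C \geq 0$, and use the restriction on high-multiplicity points to rule them out. The Chern-number calculation itself, though tedious, is routine given the local analysis of quotient singularities, and the final algebraic simplification of BMY is a straightforward polynomial manipulation in $n$.
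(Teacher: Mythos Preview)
The paper does not give its own proof of this statement; it is simply quoted from Hirzebruch's 1986 paper \cite{Hirz} and used as a black box (notably in the proof of Lemma~\ref{lem:6lines4pts}). So there is nothing in the paper to compare against.

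Your outline is essentially Hirzebruch's original argument: build the abelian Kummer cover of exponent $n$ branched along the arrangement, resolve, compute $c_1^2$ and $c_2$ as polynomials in $n$ with coefficients linear in $k$ and the $n_i$, invoke Bogomolov--Miyaoka--Yau once the canonical class is nef, and pass to the limit $n\to\infty$. One small correction of emphasis: in Hirzebruch's treatment the hypothesis actually used is only $n_k = n_{k-1} = 0$ (no pencil and no near-pencil), which already suffices to make the relevant covers of general type for large $n$; the paper's extra assumption $n_{k-2}=0$ is not needed for the inequality and is presumably just a convenient blanket hypothesis in this section. Otherwise your plan is sound and would reproduce the cited result.
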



We now include several more facts and lemmas that appear in the main proof below.


\begin{fact}
\label{fact:2trip5lines}
An arrangement containing two triples must contain at least five lines.
\end{fact}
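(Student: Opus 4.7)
The plan is to prove this by casework on how many lines the two triples share, using the basic fact that two distinct points in $\mathbb{CP}^2$ lie on at most one common line.

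Let $P_1$ and $P_2$ be the two triple points, and let $\mathcal{L}_i$ denote the set of three lines through $P_i$. First I would observe that $|\mathcal{L}_1 \cap \mathcal{L}_2| \leq 1$: if two distinct lines both passed through $P_1$ and $P_2$, then $P_1$ and $P_2$ would determine those two lines, contradicting the fact that two distinct points determine a unique line in $\mathbb{CP}^2$.

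Next I would split into two cases. If $\mathcal{L}_1 \cap \mathcal{L}_2 = \emptyset$, then by inclusion-exclusion $|\mathcal{L}_1 \cup \mathcal{L}_2| = 3 + 3 = 6 \geq 5$. If instead $|\mathcal{L}_1 \cap \mathcal{L}_2| = 1$, say the common line is $L$, then $L$ together with the two remaining lines of $\mathcal{L}_1$ and the two remaining lines of $\mathcal{L}_2$ give $1 + 2 + 2 = 5$ distinct lines (distinctness of the four non-shared lines follows again from the uniqueness-of-line argument applied to each pair). Either way, the arrangement contains at least five lines.

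There is no real obstacle here; the only subtlety is making sure to invoke the uniqueness of the line through two distinct points when verifying that the four non-shared lines in the second case are pairwise distinct. I would also note in passing that the bound is sharp, realized for instance by a near-pencil configuration where five lines form a triangle-like picture with two triple points joined by a common line.
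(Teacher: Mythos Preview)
Your proof is correct. The paper actually states this as a \emph{Fact} without proof, so there is no argument in the paper to compare against; your inclusion--exclusion argument based on the uniqueness of the line through two distinct points is exactly the natural justification and fills in what the paper omits. One minor quibble: the phrase ``near-pencil configuration'' in your sharpness remark is nonstandard usage (a near-pencil usually means $n-1$ concurrent lines plus one transversal), but the example you describe---two pencils of three lines sharing a common line---is indeed a valid sharpness witness.
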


\begin{fact}
\label{fact:3col6lines}
A line containing three triples must be part of an arrangement with at least six other lines.
\end{fact}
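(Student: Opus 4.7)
The plan is to count the lines forced by the incidence structure at the three triple points on the given line. Let $L$ be the line containing three triples, and label these intersection points $P_1, P_2, P_3 \in L$. By definition of a triple point, exactly three lines meet at each $P_i$, and one of these is $L$ itself, so at each $P_i$ there are two further lines of the arrangement, neither of which coincides with $L$.

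This produces a list of $2 + 2 + 2 = 6$ lines besides $L$. The key observation is that these six lines are pairwise distinct. Indeed, any two of the $P_i$ lie on $L$, so any line passing through two of them must coincide with $L$; hence a line from the pair through $P_i$ cannot equal a line from the pair through $P_j$ when $i \neq j$. Within a single $P_i$, the two lines other than $L$ are distinct by the definition of a triple point (three distinct lines meet there). Thus the six lines are distinct from each other and from $L$, giving at least six other lines in the arrangement, which is exactly the claim.

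There is no real obstacle here; the only subtlety is ruling out coincidences among the six auxiliary lines, and this is handled by the elementary fact that two distinct points of $\mathbb{CP}^2$ determine a unique line. The fact can then be cited in subsequent proofs whenever a line is observed to contain three triple points.
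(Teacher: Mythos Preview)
Your proof is correct. The paper states this as a \emph{Fact} without proof, so there is no argument in the paper to compare against; your counting argument, with the observation that two distinct triple points on $L$ cannot share a second common line, is exactly the intended elementary justification.
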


\begin{fact}
\label{fact:4col8lines}
A line containing four triples must be part of an arrangement with at least eight other lines.
\end{fact}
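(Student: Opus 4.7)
My plan is to prove this by a direct counting argument using the fact that two distinct points determine a unique line in $\mathbb{CP}^2$.

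Suppose $L$ is a line containing four triple points $p_1, p_2, p_3, p_4$. At each triple point $p_i$, there are exactly three lines passing through $p_i$, one of which is $L$ itself. Let $L_{i,1}$ and $L_{i,2}$ denote the two lines other than $L$ passing through $p_i$. This gives a list of eight lines (with possible repetitions a priori):
\[
L_{1,1},\ L_{1,2},\ L_{2,1},\ L_{2,2},\ L_{3,1},\ L_{3,2},\ L_{4,1},\ L_{4,2}.
\]

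The heart of the argument is to show that these eight lines are pairwise distinct and each different from $L$. By construction each $L_{i,j} \neq L$. Suppose for contradiction that $L_{i,j} = L_{k,l}$ for some $i \neq k$. Then this common line passes through both $p_i$ and $p_k$, which are two distinct points of $L$. Since two distinct points in $\mathbb{CP}^2$ determine a unique line, we would have $L_{i,j} = L$, contradicting the previous observation. If instead $L_{i,1} = L_{i,2}$ for some $i$, then only two distinct lines pass through $p_i$, contradicting the fact that $p_i$ is a triple point.

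Thus all eight lines $L_{i,j}$ are distinct from each other and from $L$, so the arrangement contains at least eight lines besides $L$. There is no real obstacle here; the argument is purely combinatorial and relies only on the incidence geometry of lines in $\mathbb{CP}^2$, making this the easiest of the three facts stated in this block.
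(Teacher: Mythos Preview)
Your proof is correct. The paper states this as a Fact without proof, treating it as self-evident; your counting argument via the uniqueness of the line through two points is exactly the kind of elementary justification one would expect, and it fills in the details cleanly.
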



\begin{lemma}
\label{lem:0col6lines}
Three triples that are not colinear must be part of an arrangement with at least six lines.
\end{lemma}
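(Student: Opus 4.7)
The plan is to count lines by applying inclusion--exclusion to the three sets of lines through the three triple points. Let $P_1, P_2, P_3$ denote the three non-colinear triples, and for each $i$ let $L_i$ be the set of three lines of the arrangement passing through $P_i$. Since the arrangement must contain every line in $L_1 \cup L_2 \cup L_3$, it suffices to show that $|L_1 \cup L_2 \cup L_3| \geq 6$.

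Next I would establish two elementary observations. First, for any two distinct indices $i$ and $j$, the set $L_i \cap L_j$ consists of lines passing through both $P_i$ and $P_j$, and since two distinct points determine a unique line, $|L_i \cap L_j| \leq 1$. Second, any element of $L_1 \cap L_2 \cap L_3$ would be a line passing through all three $P_i$, which is ruled out by the non-colinearity hypothesis; hence $L_1 \cap L_2 \cap L_3 = \emptyset$.

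Inserting these bounds into the inclusion--exclusion identity
\[
|L_1 \cup L_2 \cup L_3| = \sum_{i} |L_i| - \sum_{i<j} |L_i \cap L_j| + |L_1 \cap L_2 \cap L_3|
\]
yields $|L_1 \cup L_2 \cup L_3| \geq 9 - 3 + 0 = 6$, as required. There is no serious obstacle here: the argument is a short counting exercise that only invokes the fact that two points determine at most one line (already used implicitly in Facts \ref{fact:2trip5lines}--\ref{fact:4col8lines}). The non-colinearity hypothesis is used precisely and only to force the triple-intersection term to vanish; without it the same calculation would yield the stronger bound of seven lines, since a common line through all three triples would contribute to each pairwise intersection as well as to the triple intersection.
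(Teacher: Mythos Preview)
Your proof is correct and takes a genuinely different route from the paper. The paper argues globally via the parameters $\ell_i$ of Lemma~\ref{lemma:eqns}: assuming the arrangement has exactly three (non-colinear) triples, it obtains $\ell_i=0$ for $i\ge 3$, combines $\ell_0+\ell_1+\ell_2=k$ with $\ell_1+2\ell_2=9$, and uses $\ell_2\le 3$ to conclude $k=9+\ell_0-\ell_2\ge 6$. Your inclusion--exclusion argument is more local and more elementary: it needs only the fact that two points determine at most one line, it avoids any appeal to Lemma~\ref{lemma:eqns}, and it applies verbatim even when the arrangement has additional triples beyond the three in question (a situation the paper's proof handles only implicitly). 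Conversely, the paper's method fits into its broader counting framework and makes the role of $\ell_2$ explicit.

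One small caveat about your closing remark: dropping the non-colinearity hypothesis does not automatically yield a bound of seven. Three triples can be geometrically colinear without the line through them belonging to the arrangement, in which case the triple intersection is still empty and inclusion--exclusion only gives six. The seven-line conclusion (matching Fact~\ref{fact:3col6lines}) requires that the common line actually be a line of the arrangement. This does not affect the main argument, which is complete as written.
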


\begin{proof}
Let $\mathcal{A}=\{L_1, L_2,\dots, L_k\}$ be an arrangement in $\mathbb{CP}^2$, and recall that $\ell_i$ is the number of lines that each passes through exactly $i$ triple points. Since no three triple points of $\mathcal{A}$ are collinear, then $\ell_i=0$ for $i\geq3$. By lemma \ref{lemma:eqns}, we have $\ell_0+\ell_1+\ell_2=k$ and $\ell_1+ 2\ell_2=3n_3=9$. Since there are only three triple points and there exists a unique line passing through two distinct points, then $\ell_2\leq 3$. Therefore, $k=9+\ell_0-\ell_2\geq 6$.
\end{proof}

\begin{lemma}
\label{lem:6lines4pts}
A subarrangement of six lines can have at most four triple points.
\end{lemma}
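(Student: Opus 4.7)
The plan is a short counting argument built on two results already in the paper. My first step would be to show that, in a six-line arrangement, no single line can contain three or more triple points. This is immediate from Fact \ref{fact:3col6lines}, which states that a line carrying three triples forces the arrangement to contain at least six other lines; hence for $k=6$ we have $\ell_i = 0$ for all $i \geq 3$.

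My second step would be to plug this into Lemma \ref{lemma:eqns} with $k=6$. The two equalities there reduce to
\[
\ell_0 + \ell_1 + \ell_2 = 6, \qquad \ell_1 + 2\ell_2 = 3n_3.
\]
Bounding $\ell_1 + 2\ell_2 \leq 2(\ell_0 + \ell_1 + \ell_2) = 12$ then gives $3n_3 \leq 12$, i.e.\ the desired $n_3 \leq 4$.

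There is no real obstacle here; both ingredients are already in place. As a sanity check, the same bound can be recovered through Theorem \ref{lemma:Hirzebruch}. A six-line subarrangement of $\mathcal{A}$ still has only double and triple points, so $n_4=n_5=n_6=0$ and Hirzebruch's inequality gives $n_2 + \tfrac{3}{4}n_3 \geq 6$, while the intersection formula in Fact \ref{lemma:intersection-formula} gives $n_2 + 3n_3 = 15$. Subtracting produces $\tfrac{9}{4}n_3 \leq 9$, i.e.\ $n_3 \leq 4$, matching the direct argument.
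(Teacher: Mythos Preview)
Your primary argument is correct and takes a genuinely different route from the paper's. The paper first applies the intersection formula (Fact~\ref{lemma:intersection-formula}) to get $n_3\le 5$, and then invokes Hirzebruch's inequality (Theorem~\ref{lemma:Hirzebruch}) to rule out $n_3=5$. You instead use Fact~\ref{fact:3col6lines} to force $\ell_i=0$ for $i\ge 3$ and then the incidence identity from Lemma~\ref{lemma:eqns} to obtain $3n_3=\ell_1+2\ell_2\le 2\cdot 6=12$ directly. This is more elementary: it avoids Hirzebruch entirely and uses only a pigeonhole-style count. Your sanity check, on the other hand, is essentially the paper's proof, just combined into a single subtraction rather than a two-step ``$\le 5$, then not $5$'' argument.

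One small point worth making explicit in your primary argument: Lemma~\ref{lemma:eqns} is stated for arrangements with at most triple points, so you are tacitly using that a six-line subarrangement of $\mathcal{A}$ inherits this property. You say this in the sanity check but not in the main argument. The paper's proof, by contrast, allows $n_4,n_5,n_6$ to be nonzero a priori and only forces them to vanish in the extremal case $n_3=5$, so it proves the slightly more general statement for arbitrary six-line arrangements. For the applications in the paper this makes no difference.
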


\begin{proof} 
Since $n_i\geq 0$ for $i\geq 2$, using
the Fact \ref{lemma:intersection-formula}, we see that $n_3=\dfrac{1}{3}(15-n_2-6n_4-10n_5-15n_6)\leq 5$.  
 If $n_3=5$, then $n_2=n_4=n_5=n_6=0$. By Theorem \ref{lemma:Hirzebruch}, we would have that $n_2+\dfrac{3}{4}n_3=\dfrac{15}{4}\geq 6$, a contradiction. Therefore,  $n_3\leq 4$.
\end{proof}

\begin{lemma}
\label{lem:6lines3triples}
There is a unique combinatorial arrangement $\mathcal{B}$ of six lines with exactly three triples as shown in Figure \ref{fig:lemmas}.
\end{lemma}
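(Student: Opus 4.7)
The plan is to proceed purely combinatorially using the equations of Lemma \ref{lemma:eqns} together with the incidence facts already listed, and then to reconstruct the unique combinatorial type from the resulting parameters.

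First I would set up the counting. Applying Lemma \ref{lemma:eqns} with $k=6$ and $n_3=3$ gives
\[
\ell_0+\ell_1+\ell_2+\ell_3+\ell_4+\ell_5+\ell_6=6, \qquad \ell_1+2\ell_2+3\ell_3+4\ell_4+5\ell_5+6\ell_6=9.
\]
Fact \ref{fact:3col6lines} immediately rules out $\ell_j\geq 1$ for any $j\geq 3$, since a line through three triples would force at least seven lines in the arrangement. So the system collapses to $\ell_0+\ell_1+\ell_2=6$ and $\ell_1+2\ell_2=9$. Eliminating $\ell_1$ yields $\ell_0=\ell_2-3$, and I would bound $\ell_2$ from above by observing that a line in the $\ell_2$ class is the unique line through some pair of triple points, so $\ell_2\leq\binom{3}{2}=3$. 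Combined with $\ell_0\geq 0$, this forces $\ell_2=3$, $\ell_1=3$, $\ell_0=0$.

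Next I would translate these parameters into an incidence description. Because $\ell_3=0$, no line contains all three triples, so by Lemma \ref{lem:0col6lines} (or just the previous sentence) the three triples $P_1,P_2,P_3$ are the vertices of a genuine triangle, and the three $\ell_2$-lines must be exactly the three sides $L_{12}$, $L_{13}$, $L_{23}$ of this triangle (one per pair). Each triple $P_i$ already has two of the three triangle sides passing through it, so to complete its multiplicity to three we need a third line $M_i$ through $P_i$; the three $\ell_1$-lines are precisely $M_1,M_2,M_3$, each passing through exactly one triple (by definition of $\ell_1$). This pins down the combinatorial incidence structure: six lines partitioned as a triangle plus one additional line through each vertex, with all remaining intersections being double points, and with none of $M_1,M_2,M_3$ meeting at a common triple (otherwise a fourth triple would appear).

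The main obstacle is simply the bookkeeping needed to see that the above data determines the arrangement up to lattice isomorphism, i.e.\ that any relabeling of the triples and of the $M_i$ yields an isomorphic incidence structure. This follows because the incidence matrix between $\{P_1,P_2,P_3\}$ and $\{L_{12},L_{13},L_{23},M_1,M_2,M_3\}$ is, up to simultaneous permutation of rows and columns, the unique $3\times 6$ (0,1)-matrix with row sums $3$ and column sums $(2,2,2,1,1,1)$ in which the first three columns have supports exactly $\{1,2\},\{1,3\},\{2,3\}$ and the last three columns are distinct singletons. Hence $\mathcal{B}$ is unique as claimed, and I would conclude by referring to Figure \ref{fig:lemmas} for the picture.
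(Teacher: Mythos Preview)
Your proof is correct, and it takes a genuinely different route from the paper's.  The paper argues constructively: it fixes one triple $L_1\cap L_2\cap L_3$, shows directly that the remaining two triples must lie on $L_1\cup L_2\cup L_3$ (otherwise a seventh line is forced), and then checks by hand that the resulting incidence pattern is unique up to relabeling.  You instead run the counting equations of Lemma~\ref{lemma:eqns} together with Fact~\ref{fact:3col6lines} to pin down $(\ell_0,\ell_1,\ell_2)=(0,3,3)$, and then read the structure off from these parameters.  Your approach has the advantage of being more systematic and making the uniqueness argument at the end completely explicit via the incidence matrix; the paper's approach is shorter but leaves the final ``up to a permutation'' step to the reader.

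One small comment: the appeal to Lemma~\ref{lem:0col6lines} is not actually what you need at that point (that lemma gives a lower bound on the number of lines assuming non-collinearity, not the converse).  Fortunately your parenthetical ``or just the previous sentence'' is the correct justification: since $\ell_3=0$, no line of the arrangement carries all three triples, so the three $\ell_2$-lines pairwise intersect in distinct points $P_1,P_2,P_3$ and hence form a genuine triangle.  I would simply drop the reference to Lemma~\ref{lem:0col6lines}.
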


\begin{proof}
Let $L_1\cap L_2\cap L_3$ be a triple point. We claim that the other two triple points must be on $L_1\cup L_2\cup L_3$.  Otherwise, let $L_4\cap L_5\cap L_6$ be another triple point apart from $L_1\cup L_2\cup L_3$, then we need one more line to form the third triple point.  Let $L_1\cap L_4\cap L_5$ be the second triple point. Since there is only one more line,  then one of the intersections $(L_2\cup L_3)\cap (L_4\cup L_5)$  must be a triple point. Up to a permutation, we may assume that $L_2\cap L_4\cap L_6$ is the third triple points.  It is not hard to check that up to a permutation, this is the unique arrangement of $6$ line with exactly $3$ triple points.
\end{proof}

\begin{lemma}
\label{lem:6lines4triples}
There is a unique combinatorial arrangement $\mathcal{C}$ called the \emph{Ceva arrangement} of six lines with exactly four triples as shown in Figure \ref{fig:lemmas}.
\end{lemma}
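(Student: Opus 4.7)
The plan is to combine the counting relations of Lemma \ref{lemma:eqns} with an incidence argument: first pin down the distribution $(\ell_i)$ exactly, then recognize the resulting incidence structure as the complete quadrilateral on four points.

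First I would apply Lemma \ref{lemma:eqns} with $k = 6$ and $n_3 = 4$, giving
\[ \ell_0+\ell_1+\ell_2+\cdots+\ell_6=6,\qquad \ell_1+2\ell_2+3\ell_3+\cdots+6\ell_6=12. \]
The next step is to show $\ell_i = 0$ for all $i \geq 3$. If a line $L$ passes through $i$ triple points, then at each such triple point there are two lines other than $L$, and because two distinct lines meet in at most one point, no such ``extra'' line can serve two different triples on $L$. Hence $L$ forces $2i$ distinct other lines to exist in $\mathcal{A}$, so $2i \leq 5$ and $i\leq 2$.

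With $\ell_i = 0$ for $i\geq 3$, the two equations reduce to $\ell_0+\ell_1+\ell_2=6$ and $\ell_1+2\ell_2=12$. Subtracting gives $\ell_0-\ell_2=-6$, and since $\ell_0,\ell_2\geq 0$ and $\ell_2\leq 6$, the unique solution is $\ell_0=\ell_1=0$ and $\ell_2=6$. Thus every one of the six lines passes through \emph{exactly} two triple points.

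Finally I would identify the combinatorial structure. Since $\ell_i=0$ for $i\geq 3$, no three of the four triple points are collinear. Any line of $\mathcal{A}$ picks out an unordered pair from the four triples, and each such pair lies on at most one line. There are $\binom{4}{2}=6$ pairs and $6$ lines, so the assignment from lines to pairs of triples is a bijection: every pair of triples is joined by exactly one line of $\mathcal{A}$. This forces $\mathcal{A}$ to be the complete quadrilateral through four points in general position, the Ceva arrangement $\mathcal{C}$, which is unique up to combinatorial isomorphism.

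There is no real obstacle here beyond the careful bookkeeping in the first step; the mild subtlety is the observation that two triples on a common line cannot share an extra line, which upgrades the naive bound $i \le 5$ to $i \le 2$ and makes the counting collapse to a single possibility.
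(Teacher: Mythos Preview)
Your proof is correct and follows essentially the same strategy as the paper: use the counting identities to force every line through exactly two triple points, then identify the resulting incidence structure. Your final step is in fact a bit cleaner than the paper's---where the paper carries out an explicit line-by-line labeling to pin down the arrangement, you observe directly that the six lines are in bijection with the $\binom{4}{2}$ pairs of triples, immediately recognizing the complete quadrilateral; and your bound $2i\le 5$ makes explicit a step the paper leaves implicit.
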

\begin{proof}Since there are four triple points, then each line must pass through at least one but no more than two triple points. By Lemma \ref{lemma:basic}, we know that each line passes through exactly two triple points. Let $L_1\cap L_2\cap L_3$ be a triple point. Then all the other three triple points should be on $L_1\cup L_2\cup L_3$.  Let $L_1\cap L_4\cap L_5$ be the second triple point on $L_1$.  Then $L_2\cap L_4$ or $L_2\cap L_5$ should be a triple point. Without loss of generality, we assume that $L_2\cap L_4\cap L_6$ is a triple point. Then $L_3\cap L_5\cap L_6$ must be a triple point. Hence we get a unique arrangement of 6 lines with 4 triple points.
\end{proof}

\begin{figure}[htbp]
\begin{center}
\begin{tikzpicture}

\draw[domain=-.2:2,smooth,variable=\x] plot (\x,{-1.73*\x+2});
\draw[domain=-2:.2,smooth,variable=\x] plot (\x,{1.73*\x+2});
\draw[domain=-1.75:2.5,smooth,variable=\y] plot (0,\y);
\draw[domain=-2.25:2.25,smooth,variable=\x] plot (\x,{-1});
\draw[domain=-1.75:2.25,smooth,variable=\x] plot (\x,{-.58*\x});
\draw[domain=-2.25:1.75,smooth,variable=\x] plot (\x,{.58*\x});

	\fill[color=black] (0,0) circle (2pt);
	\fill[color=black] (0,2) circle (2pt);
	\fill[color=black] (1.73,-1) circle (2pt);
	\fill[color=black] (-1.73,-1) circle (2pt);

\begin{scope}[shift={(-5,0)}]

\draw[domain=-.2:2,smooth,variable=\x] plot (\x,{-1.73*\x+2});
\draw[domain=-2:.2,smooth,variable=\x] plot (\x,{1.73*\x+2});
\draw[rotate around={15:(0,2)}, domain=-1.75:2.5,smooth,variable=\y] plot (0,\y);
\draw[domain=-2.25:2.25,smooth,variable=\x] plot (\x,{-1});
\draw[rotate around={15:(1.73,-1)}, domain=-1.75:2.25,smooth,variable=\x] plot (\x,{-.58*\x});
\draw[rotate around={15:(-1.73,-1)}, domain=-2.25:1.75,smooth,variable=\x] plot (\x,{.58*\x});

	\fill[color=black] (0,2) circle (2pt);
	\fill[color=black] (1.73,-1) circle (2pt);
	\fill[color=black] (-1.73,-1) circle (2pt);

\end{scope}

\end{tikzpicture}
	\caption{The unique arrangements $\mathcal{B}$ and $\mathcal{C}$ given in Lemmas \ref{lem:6lines3triples} and \ref{lem:6lines4triples}, respectively.}
	\label{fig:lemmas}
\end{center}
\end{figure}
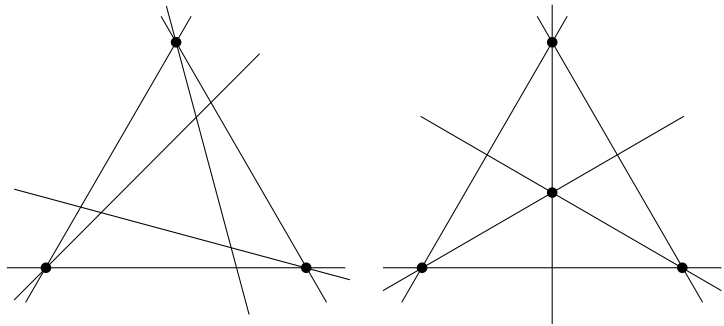

\section{The geometric methodology}
\label{subsec:linemethod}



The two goals of this section are to determine the realizability of the configuration tables that are produced in the main proofs below and to determine the irreducibility of these spaces to rule out arrangements which cannot produce Zariski pairs.

To begin we must first convert the combinatorial data of the configuration tables to algebraic equations of geometric lines.  From here we can study the moduli spaces of such arrangements to answer our main questions.  What follows is a lead up to Algorithm \ref{algorithm} that we used in the main proofs below.

We fix some notations from projective geometry.

\begin{definition}
Let $P_1=[a_1, b_1, c_1]$, $P_2=[a_2, b_2, c_2]$, and $P_3=[a_3, b_3, c_3]$ be three points in the projective plane. We define the determinant of $P_1$, $P_2$ and $P_3$ as
\[\det(P_1, P_2, P_3):=\det\begin{pmatrix}a_1&b_1&c_1\\a_2&b_2&c_2\\ a_3&b_3&c_3\end{pmatrix}.\]
For a projective line $L$ defined by $ax+by+cz=0$, we denote by $L^*=[a, b, c]$ the point in the dual projective plane $(CP^2)^*$. 
We define the \emph{determinant of the coefficient matrix} of three lines $L_1$, $L_2$ and $L_3$ to be $\det(L^*_1, L^*_2, L_3^*)$.
\end{definition}

To study the geometry, especially irreducibility, of the moduli space $\mathcal{M}_\mathcal{A}$ of a projective line arrangement $\mathcal{A}$, the idea is to convert combinatorial data into polynomial equations.

Here is the first such idea.

\begin{property}
\label{property:det}
For any three projective lines $L_i$ defined by $a_ix+b_iy+c_iz=0$, $i=1, 2, 3$,  the intersection $L_1\cap L_2\cap L_3$ is nonempty if and only if the determinant of the coefficient matrix, which we notate as $\det(L^*_1,L^*_2,L^*_3)$, is zero.  
\end{property}

We now use this to define spaces: the first by applying only one direction of this property to the triples that appear on the configuration tables; and the second by applying both directions to ensure that no other triples appear in arrangement except for those in the configuration tables.

\begin{definition}
\label{def:mod}

Let $C$ be a configuration table with $n$ columns  $L_1$, $L_2$, $\dots$, $L_n$. We define the total space $\mathcal{T}_C$ of the configuration table $C$  to be the Zariski  subset of $((\mathbb{CP}^2)^*)^n$ which consists of points $(P_1, P_2, \dots, P_n)$  such that $P_1$, $P_2$, $\dots$, $P_n$ are distinct and 
satisfy the following condition:
\begin{enumerate}
\item
$\det(P_i,P_j,P_k) = 0$ whenever the three columns $L_i$, $L_j$ and $L_k$ share a point.
\end{enumerate}

Let $\mathcal{A}=\{L_1, L_2,\dots, L_n\}$ be an arrangement of $n$ projective lines associated to the configuration table $C$. 
We define the total space $\mathcal{T}_\mathcal{A}$ of the arrangement $\mathcal{A}$ to be the Zariski subset of $((\mathbb{CP}^2)^*)^n$ which consists of points $(P_1, P_2, \dots, P_n)$ such that $P_1$, $P_2$, $\dots$, $P_n$ are distinct and satisfy the following two conditions:
\begin{enumerate}\item $\det(P_i, P_j, P_k)=0$ if the intersection of $L_i$, $L_j$ and $L_k$ is non-empty, and
\item $\det(P_r, P_s, P_t)\neq 0$ if the intersection of $L_r$, $L_s$, $L_t$ is empty.
\end{enumerate}  

The quotient of  $\mathcal{T}_C$ by the automorphism group $PGL(3, \mathbb{C})$ of the dual plane, denoted by $\mathcal{M}_C:=\mathcal{T}_C/PGL(3,\mathbb{C})$  is called the \emph{moduli space} of the configuration table $C$.

The quotient of  $\mathcal{T}_{\mathcal{A}}$ by the automorphism group $PGL(3, \mathbb{C})$ of the dual plane, denoted by $\mathcal{M}_\mathcal{A}:=\mathcal{T}_\mathcal{A}/PGL(3,\mathbb{C})$  is called the \emph{moduli space} of the line arrangement $\mathcal{A}$.

\end{definition}

In the above definition, the points $P_i$ should be considered as duals of projective lines.

\begin{proposition}
\label{prop:correspondence}Let $\mathcal{A}$ be an arrangement of projective lines associated to a configuration table $C$.
The moduli space $\mathcal{M}_\mathcal{A}$ is irreducible if the moduli space space $\mathcal{M}_C$ is irreducible.
\end{proposition}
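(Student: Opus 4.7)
The plan is to exhibit $\mathcal{M}_{\mathcal A}$ as a (nonempty) Zariski open subset of $\mathcal{M}_C$, from which irreducibility is automatic. I will work first at the level of total spaces and then descend to the quotients.

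First I would compare the defining conditions of $\mathcal{T}_{\mathcal A}$ and $\mathcal{T}_C$ directly from Definition \ref{def:mod}. Both sit inside $((\mathbb{CP}^2)^*)^n$; both impose the closed conditions $\det(P_i,P_j,P_k)=0$ for every triple $(i,j,k)$ of columns sharing a point in the configuration table $C$. The total space $\mathcal{T}_{\mathcal A}$ additionally requires $\det(P_r,P_s,P_t)\neq 0$ for every triple $(r,s,t)$ that is \emph{not} recorded as a triple point of $\mathcal A$, together with pairwise distinctness of the $P_i$'s. Since these are nonvanishing conditions on regular functions, $\mathcal{T}_{\mathcal A}$ is the intersection of $\mathcal{T}_C$ with a finite collection of principal open subsets of $((\mathbb{CP}^2)^*)^n$. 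In particular, $\mathcal{T}_{\mathcal A}$ is a Zariski open subset of $\mathcal{T}_C$.

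Next I would note that this open inclusion is $PGL(3,\mathbb{C})$-equivariant. An element $g\in PGL(3,\mathbb{C})$ acts on points of the dual plane by a linear substitution, so $\det(gP_i,gP_j,gP_k)$ is a nonzero scalar multiple of $\det(P_i,P_j,P_k)$; thus the vanishing pattern of the triple determinants is preserved, and $\mathcal{T}_{\mathcal A}$ is a $PGL(3,\mathbb{C})$-invariant Zariski open subset of $\mathcal{T}_C$. Passing to quotients by the same group, we obtain that $\mathcal{M}_{\mathcal A}$ is a Zariski open subset of $\mathcal{M}_C$. (Concretely, the orbit-map $\pi\colon\mathcal{T}_C\to\mathcal{M}_C$ is open, and $\mathcal{M}_{\mathcal A}=\pi(\mathcal{T}_{\mathcal A})$ because $\mathcal{T}_{\mathcal A}$ is saturated.)

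Finally I would invoke two standard facts: $\mathcal{M}_{\mathcal A}$ is nonempty, since by hypothesis the arrangement $\mathcal{A}$ itself provides a point in it; and any nonempty Zariski open subset of an irreducible variety is irreducible. Combining these with the irreducibility of $\mathcal{M}_C$ yields that $\mathcal{M}_{\mathcal A}$ is irreducible.

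The only delicate point is the assertion that passing to the $PGL(3,\mathbb{C})$-quotient commutes with taking open subsets in the required sense; this is what makes the equivariance step essential. Once that is in place the argument is essentially formal, which is why no serious obstacle arises. This proposition is important in practice because it allows the classification in the sections that follow to work entirely with the combinatorial object $\mathcal{M}_C$, ignoring the (infinitely many) open non-collinearity constraints that would otherwise clutter $\mathcal{M}_{\mathcal A}$.
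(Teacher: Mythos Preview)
Your proposal is correct and follows the same route as the paper: show that $\mathcal{T}_{\mathcal A}$ is a Zariski open subset of $\mathcal{T}_C$, pass to the $PGL(3,\mathbb{C})$-quotients, and conclude that a nonempty open subset of an irreducible space is irreducible. The paper's own proof is essentially two sentences stating exactly this, whereas you have spelled out the equivariance step and the nonemptiness point that the paper leaves implicit; one small slip is that pairwise distinctness of the $P_i$ is already built into both $\mathcal{T}_C$ and $\mathcal{T}_{\mathcal A}$ in Definition~\ref{def:mod}, so it is not an \emph{additional} open condition distinguishing the two.
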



\begin{proof}
We see that $\mathcal{T}_\mathcal{A}$ is a Zariski open subset of $\mathcal{T}_C$.  Hence, the moduli space $\mathcal{M}_\mathcal{A}$ is a Zariski open subset of $\mathcal{M}_C$. 
\end{proof}

This prescribes Algorithm \ref{algorithm} that we present in the next subsection, where the reader can find more details.

First we present an important theorem from the literature that  will be used to classify moduli spaces.

\begin{theorem}[Randell's Lattice-Isotopy Theorem \cite{Ran}]
\label{thm:Rand}
If $\mathcal{A}_t$ is a lattice-isotopy between two line arrangements $\mathcal{A}_0$ and $\mathcal{A}_1$, then the complement of $\mathcal{A}_0$ is diffeomorphic to the complement  of $\mathcal{A}_1$. 
\end{theorem}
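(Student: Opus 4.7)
The plan is to realize the family $\{\mathcal{A}_t\}_{t\in[0,1]}$ as the fibers of a projection from a stratified total space and then produce a diffeomorphism of complements by integrating a smooth vector field that lifts $\partial/\partial t$.  Set
\[
\mathcal{X}=\{(p,t)\in\mathbb{CP}^2\times[0,1]: p\in L \text{ for some } L\in\mathcal{A}_t\},\qquad \mathcal{Y}=(\mathbb{CP}^2\times[0,1])\setminus\mathcal{X},
\]
and let $\pi:\mathbb{CP}^2\times[0,1]\to[0,1]$ be the second projection.  It suffices to exhibit $\pi|_{\mathcal{Y}}$ as a smooth fiber bundle, because parallel transport from $t=0$ to $t=1$ then yields the desired diffeomorphism $M(\mathcal{A}_0)\to M(\mathcal{A}_1)$.

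I would stratify $\mathbb{CP}^2\times[0,1]$ into the open piece $\mathcal{Y}$, the \emph{line strata} obtained by sweeping each $L_i(t)$ with its multiple points removed, and the \emph{vertex strata} obtained by tracking each multiple point $P_\alpha(t)=\bigcap_{i\in S_\alpha}L_i(t)$ through time.  Because the intersection lattice is constant, each index set $S_\alpha$ cuts out exactly one point for every $t$, and that point depends smoothly on $t$ since the lines $L_i(t)$ do; thus the vertex strata are smooth curves and the line strata are smooth surfaces.  The projection $\pi$ restricts to a proper submersion on each stratum, so either by invoking Thom's first isotopy lemma directly, or by constructing via a partition of unity a smooth stratified vector field $v$ on $\mathbb{CP}^2\times[0,1]$ with $d\pi(v)=\partial/\partial t$ and then flowing its time-one map, one obtains a stratum-preserving homeomorphism that carries the fiber at $t=0$ to the fiber at $t=1$ and is a diffeomorphism on the open stratum $\mathcal{Y}$.

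The main obstacle is verifying that $v$ can be chosen smooth on $\mathcal{Y}$ while remaining tangent to each smaller stratum, despite the singularities of $\mathcal{X}$ along its vertex strata.  This reduces to a local normal form: near a point $(P_\alpha(t_0),t_0)$ where $k$ lines of $\mathcal{A}_{t_0}$ meet, I want local holomorphic coordinates $(u,w,t)$ in which $P_\alpha(t)$ sits at the origin of the $(u,w)$-slice and each of the $k$ lines through $P_\alpha(t)$ becomes a linear equation whose coefficients depend smoothly on $t$.  Constructing such a normal form requires only that the participating $k$ lines remain distinct and concurrent throughout the isotopy, which is precisely the constancy of the lattice at $P_\alpha$.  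Once these local trivializations are in hand, the pullback of $\partial/\partial t$ in each chart is automatically tangent to every stratum, and patching by a partition of unity subordinate to a finite cover of $[0,1]$ (which exists by compactness) yields the global field $v$; its integration then delivers the required diffeomorphism.
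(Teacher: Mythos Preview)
The paper does not supply its own proof of this theorem; it is quoted from Randell \cite{Ran} and used as a black box (it is invoked, for instance, in the proof of Proposition~\ref{prop:blue}).  Your sketch is essentially the standard argument, and in outline it matches what Randell does: view the family as a stratified space over $[0,1]$, use the constant-lattice hypothesis to see that each stratum submerses properly onto the base, and then either invoke Thom's first isotopy lemma or, equivalently, lift $\partial/\partial t$ to a stratum-tangent vector field and integrate its flow.  So there is nothing to compare against in this paper, but your approach is the expected one.

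Two small points you would want to tighten in a full write-up.  First, Thom's first isotopy lemma requires Whitney regularity of the stratification; you should at least remark that the strata you describe (open complement, punctured lines, vertex curves) form a Whitney stratification---this is automatic here because the closures are smooth and the incidence is locally trivial, but it deserves a sentence.  Second, your ``local holomorphic coordinates $(u,w,t)$'' cannot be holomorphic in $t$, since $t$ ranges over a real interval; what you mean is holomorphic in the $\mathbb{CP}^2$ directions and smooth in $t$, which is exactly what is needed for the patched vector field to be smooth on $\mathcal{Y}$.
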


We denote by $\mathcal{M}^\mathbb{C}_{\mathcal{A}}$ the quotient of $\mathcal{M}_{\mathcal{A}}$ under complex conjugation.  Using Randell's Lattice-Isotopy Theorem, we observe the following rigidity result on diffeomorphic types of complements of line arrangements.

\begin{proposition}
\label{prop:blue}
 If $\mathcal{M}^\mathbb{C}_\mathcal{A}$ is irreducible, 
 then the complements of any two line arrangements in $\mathcal{M}_\mathcal{A}$ are diffeomorphic. In particular, the fundamental group of the complement of $\mathcal{A}$ is a combinatorial invariant.
\end{proposition}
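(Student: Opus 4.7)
The plan is to analyze how complex conjugation acts on $\mathcal{M}_\mathcal{A}$ and then combine Randell's Lattice-Isotopy Theorem (inside irreducible components) with the fact that complex conjugation on $\mathbb{CP}^2$ is itself a diffeomorphism (across conjugate components).

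First I would note that complex conjugation on $(\mathbb{CP}^2)^*$ induces an algebraic involution $\sigma$ on $((\mathbb{CP}^2)^*)^n$ that preserves the incidence relations, since the vanishing of the determinants $\det(P_i,P_j,P_k)$ appearing in Definition \ref{def:mod} is invariant under coordinatewise conjugation. This $\sigma$ commutes (after conjugating $PGL(3,\mathbb{C})$ entries) with the $PGL(3,\mathbb{C})$-action and therefore descends to an involution on $\mathcal{M}_\mathcal{A}$; by construction $\mathcal{M}^\mathbb{C}_\mathcal{A}=\mathcal{M}_\mathcal{A}/\sigma$. Because $\sigma$ has order two, it permutes the irreducible components of $\mathcal{M}_\mathcal{A}$ in orbits of size one or two, and irreducibility of the quotient forces exactly one such orbit. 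Hence $\mathcal{M}_\mathcal{A}$ is either itself irreducible, or consists of exactly two irreducible components interchanged by $\sigma$.

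Next, if two arrangements $\mathcal{A}_0,\mathcal{A}_1 \in \mathcal{M}_\mathcal{A}$ lie in the same irreducible component, then, since irreducible complex quasi-projective varieties are path-connected in the classical topology, one can join them by a continuous path. Along this path the intersection lattice is constant by the very definition of $\mathcal{M}_\mathcal{A}$, so after choosing a continuous $PGL(3,\mathbb{C})$-lift one obtains a lattice-isotopy in the sense of Theorem \ref{thm:Rand}. Randell's theorem then yields a diffeomorphism between the complements of $\mathcal{A}_0$ and $\mathcal{A}_1$.

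If instead $\mathcal{A}_0$ and $\mathcal{A}_1$ lie in the two distinct components swapped by $\sigma$, then $\overline{\mathcal{A}_0}$ lies in the same component as $\mathcal{A}_1$. Complex conjugation on $\mathbb{CP}^2$ restricts to a diffeomorphism between $M(\mathcal{A}_0)$ and $M(\overline{\mathcal{A}_0})$, and the previous paragraph supplies a diffeomorphism between $M(\overline{\mathcal{A}_0})$ and $M(\mathcal{A}_1)$; composing them proves the first assertion. Combinatorial invariance of $\pi_1$ is then immediate, since any two line arrangements lattice-isomorphic to $\mathcal{A}$ give (diffeomorphic, hence) homotopy equivalent complements.

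The step I expect to require the most care is the first one: verifying that $\sigma$ descends cleanly to $\mathcal{M}_\mathcal{A}$ (i.e.\ compatibility of complex conjugation with the $PGL(3,\mathbb{C})$-quotient and with the relabelling implicit in lattice isomorphism), and that irreducibility of the quotient genuinely forces the two-orbit dichotomy above rather than some more exotic identification. Once this is pinned down, the remainder of the argument is a routine combination of path-connectedness of irreducible complex varieties, Randell's theorem, and the diffeomorphism induced by complex conjugation of $\mathbb{CP}^2$.
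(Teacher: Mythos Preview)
Your proposal is correct and follows essentially the same approach as the paper: split into the cases where the two arrangements lie in the same irreducible component (handled by Randell's Lattice-Isotopy Theorem) or in two complex-conjugate components (handled by the diffeomorphism induced by complex conjugation on $\mathbb{CP}^2$, composed with the first case). If anything, you are more careful than the paper, which simply asserts the dichotomy ``either $\mathcal{M}_\mathcal{A}$ is irreducible or it has two conjugate components'' without the orbit-counting justification you supply.
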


\begin{proof}

Let $[\mathcal{A}_1]$ and $[\mathcal{A}_2]$ be two points in $\mathcal{M}_\mathcal{A}$. If the two arrangements $\mathcal{A}_1$ and $\mathcal{A}_2$ are in the same irreducible component of $\mathcal{M}_\mathcal{A}$, then by Randell's Lattice-Isotopy Theorem \cite{Ran} the  complements of  $\mathcal{A}_1$ and $\mathcal{A}_2$ are diffeomorphic. 

Suppose that  the two arrangements $\mathcal{A}_1$ and $\mathcal{A}_2$  are in two different irreducible components of $\mathcal{M}_\mathcal{A}$. Since $\mathcal{M}^\mathbb{C}_\mathcal{A}$ is irreducible, the two irreducible components must be complex conjugated to each other.  Therefore, there exist a line arrangement $\mathcal{A}'_1$ which is complex conjugated to $\mathcal{A}_1$ and in the same irreducible component of $[\mathcal{A}_2]$.  Notice that complex conjugation is a diffeomorphism. Then the complements of  $\mathcal{A}_1$ and $\mathcal{A}'_1$ are diffeomorphic.  Since $\mathcal{A}'_1$  and $\mathcal{A}_2$ are in the same irreducible component, then their complements are diffeomorphic.  Therefore,  the complement of  $\mathcal{A}_1$ is diffeomorphic to the complement of $\mathcal{A}_2$.
\end{proof}

\begin{remark}Notice that in  \cite{CoSuc}, Cohen-Suciu prove the following theorem which implies that the fundamental groups of complements of complex conjugated curves are isomorphic. Together with Randell's Lattice-Isotopy Theorem, this also implies that the fundamental group of the complement of $\mathcal{A}$ is combinatorially invariant if $\mathcal{M}^\mathbb{C}_\mathcal{A}$ is irreducible.
\end{remark}

\begin{theorem}[Cohen-Suciu \cite{CoSuc}, Theorem 3.9]
\label{thm:CS}
The braid monodromies of complex conjugated curves are equivalent. 
\end{theorem}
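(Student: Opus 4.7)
The plan is to exhibit complex conjugation as a diffeomorphism intertwining the two braid monodromies. Consider $\iota: \mathbb{C}^2 \to \mathbb{C}^2$, $(x,y) \mapsto (\bar x, \bar y)$. A quick Jacobian computation shows $\iota$ is orientation-preserving on $\mathbb{C}^2$ (the two $-1$'s coming from conjugating each coordinate multiply to $+1$), and of course $\iota(C) = \bar C$. Moreover, $\iota$ conjugates any generic linear projection $\pi: \mathbb{C}^2 \to \mathbb{C}$ for $C$ into a generic projection $\bar\pi$ for $\bar C$, identifying the fibration $\pi|_{\mathbb{C}^2 \setminus C}$ with $\bar\pi|_{\mathbb{C}^2 \setminus \bar C}$, sending the discriminant $\Sigma \subset \mathbb{C}$ to $\bar\Sigma$ and a basepoint $x_0$ to $\bar x_0$.

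First I would fix a working definition of braid monodromy: given the fibration $\pi: \mathbb{C}^2 \setminus C \to \mathbb{C} \setminus \Sigma$, a basepoint $x_0$, and a Hurwitz (geometric) basis $(\gamma_1, \dots, \gamma_n)$ of $\pi_1(\mathbb{C} \setminus \Sigma, x_0)$, parallel transport in the fibration assigns a braid $b_i \in B_d$ to each $\gamma_i$, where $d = \deg C$. Cohen--Suciu's equivalence identifies two such tuples that differ by a simultaneous conjugation in $B_d$ and by Hurwitz moves on the basis. For $\bar C$ I would use the data pushed forward by $\iota$: the projection $\bar\pi$, basepoint $\bar x_0$, and pushed-forward loops $\iota \circ \gamma_i$.

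Second, I would compute how $\iota$ transforms the braids. Restricted either to the base $\mathbb{C}$ or to a single complex-line fiber, $\iota$ is ordinary complex conjugation, hence orientation-reversing on each factor separately, even though the two reversals combine to preserve the orientation of $\mathbb{C}^2$. The base orientation reversal flips counterclockwise loops around $\Sigma$ into clockwise loops around $\bar\Sigma$, which inverts each monodromy braid. The fiber orientation reversal acts on the braid group by the involution $\sigma_j \mapsto \sigma_j^{-1}$. For the positive braids that arise from algebraic curves, these two effects combine to give word reversal: $\sigma_{j_1} \cdots \sigma_{j_k} \mapsto \sigma_{j_k} \cdots \sigma_{j_1}$.

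The main obstacle is then showing that the word-reversed tuple, together with the reversed cyclic order of critical values (also induced by the base orientation flip), is Cohen--Suciu equivalent to the original tuple $(b_1, \dots, b_n)$. This should follow from the fact that reversing a Hurwitz basis and inverting each of its entries is realizable by an explicit sequence of Hurwitz moves — essentially the action of a half-twist of the whole disk containing the critical values — composed with a single simultaneous conjugation in $B_d$. The delicate bookkeeping concerns orientation conventions for the base, the fiber, and the Hurwitz-basis ordering; pinning these down precisely is the key technical step, after which the theorem follows mechanically from pushing the braid-monodromy construction through $\iota$.
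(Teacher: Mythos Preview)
The paper does not prove this theorem at all: it is quoted as a result of Cohen and Suciu \cite{CoSuc} and used as a black box (together with Randell's isotopy theorem) to justify Proposition~\ref{prop:blue}. There is therefore no ``paper's own proof'' to compare your attempt against.

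That said, your outline is the right shape for how such a statement is actually proved. The diffeomorphism $\iota:(x,y)\mapsto(\bar x,\bar y)$ is orientation-preserving on $\mathbb{C}^2$ and carries the data $(C,\pi,x_0,\Sigma)$ to $(\bar C,\bar\pi,\bar x_0,\bar\Sigma)$; the content is entirely in tracking what happens to the braid factorization under the two separate orientation reversals on base and fiber. Two points deserve care. First, the identification of the braid groups of the fiber over $x_0$ and the fiber over $\bar x_0$ is only well-defined up to an inner automorphism, so ``the involution $\sigma_j\mapsto\sigma_j^{-1}$'' must be read modulo conjugation; this is harmless for the Cohen--Suciu equivalence but should be said explicitly. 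Second, your assertion that reversing a Hurwitz basis and inverting its members is realizable by Hurwitz moves plus a global conjugation is correct (it is the action of the central half-twist $\Delta$ in the mapping class group of the base disc), but it is exactly the step that requires a clean argument rather than a sentence; in Cohen--Suciu's treatment this is packaged via the observation that the braid monodromy is determined up to their equivalence by the pair $(\mathbb{C}^2,C)$ as an oriented topological pair, which $\iota$ preserves. Your sketch would become a proof once that last bookkeeping is written out.
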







\subsection{A practical algorithm and examples}
\label{subsec:ex}

The highlight of this subsection is Algorithm \ref{algorithm}, which uses a configuration table to obtain a line arrangement and its moduli space, and to determine whether this space is irreducible and  whether the configuration table is geometrically realizable.

Several examples are given highlighting the problematic spots of this algorithm, and the reader should pay careful attention to Example \ref{example:12.B.2.iv} which produces a potential Zariski pair.

To set this up we begin with an important lemma that will allow us to use the same three-by-three grid of lines for all of our cases.  Afterwards we mention several results and techniques that are used to discern the moduli spaces.

\begin{definition}
\label{def:pencil}
A \emph{pencil} is an arrangement of $k$ lines with a multiple point of multiplicity $k$.
\end{definition}

 We start from a basic observation that a line arrangement can be specified to a special position in the projective plane.  This fact, without proof, has been used in \cite{NazYos}, \cite{Fei9}, and \cite{Fei:10} to calculate moduli spaces of arrangements.  We include the proof here for the sake of completion.
 
  Note that we use Lemma \ref{grid} to draw pictures in the affine plane which is  the complement of the line at infinity  (not included in the arrangement) defined by $z=0$ in $\mathbb{CP}^2$.

\begin{lemma}
\label{grid}
Let $\{L_1, L_2, L_3\}$ and $\{L_4, L_5, L_6\}$ be two pencils of lines which intersect transversally in $9$ points.  Then there is a unique automorphism of the dual projective plane such that the six lines under the automorphism are defined by  $x=0$, $x=z$, $x=t_1z$, $y=0$, $y=z$, $y=t_2z$. 
\end{lemma}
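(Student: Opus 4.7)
The strategy is to construct $\phi \in PGL(3,\mathbb{C})$ in two stages: first kill six degrees of freedom by fixing three non-collinear points, then consume the residual two-dimensional diagonal torus to normalize two scalar parameters. Let $P = L_1 \cap L_2 \cap L_3$ and $Q = L_4 \cap L_5 \cap L_6$ be the centers of the two pencils. Transversality at nine distinct intersections forces $P \neq Q$ (else all nine pairwise intersections would coincide at $P$) and forces no line of the first pencil to equal any line of the second (else that pair would meet in a whole line rather than a single point). In particular $L_1 \neq L_4$, so $R := L_1 \cap L_4$ is a well-defined point, distinct from both $P$ and $Q$.

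The main technical step is showing that $P$, $Q$, $R$ are non-collinear. If $R$ lay on $\overline{PQ}$, then $L_1$ would contain the two distinct points $P$ and $R$ of $\overline{PQ}$, forcing $L_1 = \overline{PQ}$; but then $L_1 \ni Q$ would place $L_1$ in the second pencil as well, contradicting the previous observation. With $P,Q,R$ non-collinear, pick any $\phi_0 \in PGL(3,\mathbb{C})$ with $\phi_0(P) = [0,1,0]$, $\phi_0(Q) = [1,0,0]$, $\phi_0(R) = [0,0,1]$. Every line of the first pencil passes through $[0,1,0]$, and none can be the line at infinity $z = 0$ (such a line would pass through $Q$, which no $L_i$ in the first pencil does); hence $\phi_0(L_i) : x = c_i z$ for some $c_i \in \mathbb{C}$, and symmetrically $\phi_0(L_j) : y = d_j z$ for $j = 4,5,6$. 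The condition $\phi_0(R) = [0,0,1] \in \phi_0(L_1) \cap \phi_0(L_4)$ forces $c_1 = d_4 = 0$, so $\phi_0(L_1) : x = 0$ and $\phi_0(L_4) : y = 0$.

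The stabilizer of $\{[0,1,0],[1,0,0],[0,0,1]\}$ in $PGL(3,\mathbb{C})$ is the diagonal torus $\mathrm{diag}(\lambda,\mu,1)$, under which a line $x = c z$ rescales to $x = (\lambda c) z$ and $y = d z$ to $y = (\mu d) z$. Choosing $\lambda = 1/c_2$ and $\mu = 1/d_5$ (both nonzero by transversality) normalizes $\phi(L_2)$ to $x = z$ and $\phi(L_5)$ to $y = z$, and delivers $\phi(L_3) : x = t_1 z$ and $\phi(L_6) : y = t_2 z$ with $t_1 = c_3/c_2$, $t_2 = d_6/d_5$. Uniqueness is then immediate: any further element of the torus preserving both $x = z$ and $y = z$ must satisfy $\lambda = \mu = 1$, so the composite automorphism is pinned down. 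The only non-formal input in the entire argument is the non-collinearity of $\{P,Q,R\}$; everything else is routine projective-linear bookkeeping once the reference triple is fixed.
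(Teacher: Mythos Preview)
Your proof is correct, and the overall architecture---normalize a projective frame first, then spend the residual torus---is the standard one. The paper organizes the same computation differently: it works in the dual plane with explicit $3\times 3$ matrices, first sending the line $\overline{PQ}$ to $z=0$, then using a linear change $M_1$ to make the two pencils parallel to the coordinate axes, and finally an affine map $M_2$ (fixing both pencil-centers at infinity) to pin down four parameters at once. Your version instead fixes the triangle $\{P,Q,R\}$ in a single stroke (absorbing six degrees of freedom), leaving only the two-dimensional diagonal torus; this is cleaner and avoids writing any matrices, at the cost of needing the non-collinearity of $P,Q,R$, which the paper's matrix route never isolates as a separate lemma.

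One small wrinkle in your non-collinearity step: you conclude that ``$L_1\ni Q$ would place $L_1$ in the second pencil as well, contradicting the previous observation.'' But the previous observation was only that $L_1\notin\{L_4,L_5,L_6\}$, and $L_1\ni Q$ does not force $L_1$ to equal one of those three specific lines. The contradiction you want is with the nine-point hypothesis directly: if $L_1$ passes through $Q$ then $L_1\cap L_4=L_1\cap L_5=L_1\cap L_6=Q$, collapsing three of the nine transverse intersections to one. The same remark justifies your earlier assertion that $R$ is distinct from $P$ and $Q$ (e.g.\ $R=P$ would mean $P\in L_4$, giving the same collapse). These are one-line fixes and do not affect the rest of the argument.
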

\begin{proof}We know that the automorphism group of the projective plane is $PGL(3,\mathbb{C})$. We can find an automorphism $M_0$ sending the line passing through the two triple points $L_1\cap L_2\cap L_3$ and $L_4\cap L_5\cap L_6$ to the line at infinity $z=0$.  Under this automorphism, the six lines can be defined by $ax+by+c_iz=0$ for $i=1,2,3$ and $dx+ey+f_{i-3}z=0$ for $i=4,5,6$, respectively, such that $D=\det \begin{pmatrix}a & b\\ d & e\end{pmatrix}\neq 0$ and $c_i\neq c_j$, $f_i\neq f_j$ if $i\neq j$.

In the next step we will find an automorphism fixing the line at infinity and changing the six lines to three horizontal and three vertical lines.   Consider  the general linear matrix
 \[M_1=-\dfrac{1}{D}\begin{pmatrix}b&-e & 0\\ -a&d &0\\ 0&0 & -D \end{pmatrix}\] 
 which acts on the dual projective plane from right and fixes the point $[0, 0, 1]$.  We see that $M_1$ sends the  points  $[a, b, c_i]$ and $[d, e, f_i]$ to $[0, 1, c_i]$ and $[1, 0, f_i]$, respectively, for $i=1, 2, 3$, as we expected.

To complete the proof, let us consider the matrix 
 \[M_2=\begin{pmatrix}f_2-f_1&0 & -f_1\\ 0&c_2-c_1 &-c_1\\ 0&0 & 1 \end{pmatrix}.\]
  One can check that $M_2$ sends $[0, 1, c_1]$, $[0, 1, c_2]$, $[1, 0, f_1]$ and $[1, 0, f_2]$ to $[0, 1, 0]$, $[0,1, 1]$, $[1, 0, 0]$ and $[1, 1, 0]$, 
respectively. The automorphism $M_0M_1M_2$ is then  the unique automorphism we want. 
\end{proof}

{ We now present the full details of the procedures used throughout the proofs below.  We conclude with examples that illustrate the different methods and situations that occur in the algorithm.}

\begin{algorithm}
\label{algorithm}
This algorithm is used to determine the irreducibility and realizability of an arrangement obtained from a given configuration table.
\begin{enumerate}
	\item[(A1)] Use the Grid Lemma \ref{grid} and its intersection points to determine equations for the lines.
	\item[(A2)] Obtain special defining equations from triple points not used in (A1) above.
	\item[(A3)] Determine the \textbf{irreducibility} of the moduli space $\mathcal{M}$ by checking the irreducibility of the equations from (A2) above.
		\begin{enumerate}
			\item True for $\dim \mathcal{M}=0$.
			\item If $\dim \mathcal{M}>0$, either use Mathematica's irreducibility test given by the command\\ \verb=IrreduciblePolynomialQ[poly, Extension->All]=\\
			and notated by ($*$) or equations ending in $=0^*$ in the tables\\
			or check by hand, as in Example \ref{11.B.3.a.-irr} for the case 11.B.3.a.iii.
		\end{enumerate}
	\item[(A4)] Determine the \textbf{realizability} of the configuration table.
		\begin{enumerate}
			\item Whenever the set $\mathcal{M}$ is empty, a contradiction arises, and it is noted in the tables.
			\item True for $\dim \mathcal{M}>0$.
			\item If $\dim \mathcal{M}=0$, look for a realization for one of the given points:  if a real solution exists, draw the arrangement to check, as in Example \ref{example:12.B.2.iv} for the case 12.B.3.b.iii.;\\ 
 if only complex solutions exist, as notated by ($\mathbb{C}$) or equations ending in $=0^\mathbb{C}$ in the tables, need to check, as in Example \ref{example:cpx-cnjgt} for the case $(9_3)$.ii.DFH, that no double points coincide in a triple.
		\end{enumerate}
\end{enumerate}
Potential Zariski paris are notated by ($\mathcal{Z}$) or equations ending in $=0^\mathcal{Z}$ in the tables.
\end{algorithm}

\begin{example}[The moduli space of the arrangement 12.B.3.b.iii.:  two real points]
\label{example:12.B.2.iv}

Using Lemma \ref{grid}, we may assume that the lines $L_7$,  $L_5$, $L_4$, $L_8$, $L_9$ and $L_3$ are defined by equations  $y=0$,  $y=z$, $y=bz$,   $x=0$,  $x=z$, and $x=az$, respectively, where $a$ and $b$ are complex numbers in $\mathbb{C}\setminus\{0, 1\}$.

Since the three triples $e_{11}=L_4\cap L_8=[0, b, 1]$, $e_6=L_3\cap L_7=[a, 0, 1]$ and $e_{12}=L_5\cap L_9=[1, 1, 1]$ are on $L_6$, the equation of the line $L_6$ can be written as $y=-\frac{b}{a}x+bz$ such that the first defining equation $1=b-\frac{b}{a}$, or equivalently, $ab=a+b$, holds.

Now we determine the equations for the rest of lines, $L_1$, $L_2$, and $L_{10}$. Since the line $L_1$ passes though the two triples $e_2=L_3\cap L_5=[a, 1, 1]$ and $e_4=L_7\cap L_9=[1, 0, 1]$, the line $L_1$ is defined by $y=\frac{1}{a-1}(x-z)$. Therefore, the triple $e_1=L_1\cap L_4$ is given by $[b(a-1)+1, b, 1]=[a+1, b, 1]$ by the equality $ab=a+b$.  The line $L_2$ passes through the two triples $e_5=L_7\cap L_8=[0, 0, 1]$ and $e_1=[a+1, b, 1]$, and so its equation can be written as $y=\frac{b}{a+1}x$. The last line $L_{10}$ passes through the triples $e_8=L_4\cap L_9=[1, b, 1]$ and $e_9=L_5\cap L_8=[0, 1, 1]$, and so its equation is given by $y=(b+1)x+z$.

Note that the triple $e_3=L_2\cap L_6\cap L_{10}$ did not yet enter into the equations of the lines; thus it will become the second defining equation
\begin{equation*}
\det(L_2, L_6, L_{10})=\det\begin{pmatrix}1&\frac{b}{a+1}& 0\\ 1&-\frac{b}{a}& b\\ 1&b-1&1 \end{pmatrix}=0.
\end{equation*}
Simplifying this second defining equation, we obtain the equation $a^2b-a^2+a+1=0$. Together with the first defining equation $ab=a+b$, we solve for $a$ and $b$ to obtain $a=\pm\frac{\sqrt{2}}{2}$ and $b=\frac{1}{1\mp\sqrt{2}}$.

We then have two real line arrangements shown in Figure \ref{12.B.2.iv}.
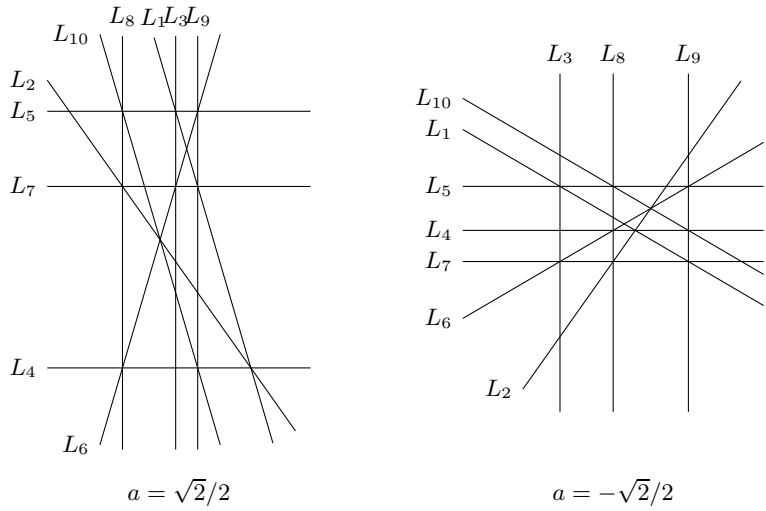
\begin{figure}[htbp]
\centering
\subfigure{}{
\begin{tikzpicture}[scale=1]
\tikzstyle{every node}=[font=\footnotesize]
\draw[domain=-3.5:2] plot (0, \x) node[above]{$L_8$};
\draw[domain=-3.5:2] plot (1, \x) node[above]{$L_9$};
\draw[domain=-3.5:2] plot ( {sqrt(2)/2}, \x) node[above]{$L_3$};

\draw[domain=2.5:-1] plot (\x, 0) node[left]{$L_7$};
\draw[domain=2.5:-1] plot (\x, 1) node[left]{$L_5$};
\draw[domain=2.5:-1] plot (\x, {1/(1-sqrt(2))}) node[left]{$L_4$};

\draw[domain={1.3:-0.3}] plot (\x, {sqrt(2)/(1-sqrt(2))*\x+1}) node[left]{$L_{10}$};
\draw[domain={2:0.42}] plot (\x, {2/(sqrt(2)-2)*(\x-1)}) node[above]{$L_1$};
\draw[domain={1.3:-0.3}] plot (\x, {-2/(sqrt(2)-2)*\x+1/(1-sqrt(2))}) node[left]{$L_6$};
\draw[domain={2.3:-1}] plot (\x, {-sqrt(2)*\x}) node[left]{$L_2$};
\node at (0.75,-3.75)[below]{$a=\sqrt{2}/2$};
\end{tikzpicture}
}\hspace{1cm}
\subfigure{}{\begin{tikzpicture}[scale=1]
\tikzstyle{every node}=[font=\footnotesize]
\draw[domain=-2:2.5] plot (0, \x) node[above]{$L_8$};
\draw[domain=-2:2.5] plot (1, \x) node[above]{$L_9$};
\draw[domain=-2:2.5] plot (-{sqrt(2)/2}, \x) node[above]{$L_3$};

\draw[domain=2:-2] plot (\x, 0) node[left]{$L_7$};
\draw[domain=2:-2] plot (\x, 1) node[left]{$L_5$};
\draw[domain=2:-2] plot (\x, {1/(1+sqrt(2))}) node[left]{$L_4$};

\draw[domain={2:-2}] plot (\x, {-sqrt(2)/(1+sqrt(2))*\x+1}) node[left]{$L_{10}$};
\draw[domain={2:-2}] plot (\x, {-2/(sqrt(2)+2)*(\x-1)}) node[left]{$L_1$};
\draw[domain={2:-2}] plot (\x, {2/(sqrt(2)+2)*\x+1/(1+sqrt(2))}) node[left]{$L_6$};
\draw[domain={1.7:-1.2}] plot (\x, {sqrt(2)*\x}) node[left]{$L_2$};
\node at (0,-2.75)[below]{$a=-\sqrt{2}/2$};
\end{tikzpicture}
}
\caption{The two real realizations of the arrangement 12.B.2.iv \label{12.B.2.iv}}
\end{figure}

From the figure, we see that there are exactly twelve triple points and no points of higher multiplicities. 
Therefore, the moduli space of the arrangement 12.B.2.iv consists of two distinct real points.
\end{example}

\begin{example}[The moduli space of the arrangement  11.B.3.a.iii.]\label{11.B.3.a.-irr}
Again using Lemma \ref{grid}, we may assume that the lines $L_1$,  $L_2$, $L_4$, $L_3$, $L_{10}$ and $L_9$ are defined by equations  $y=0$,  $y=z$, $y=bz$,   $x=0$,  $x=z$, and $x=az$, respectively, where $a$ and $b$ are complex numbers in $\mathbb{C}\setminus\{0, 1\}$.

Since the line $L_{6}$ passes through the triples $e_3=L_2\cap L_3=[0, 1, 1]$ and $e_4=L_1\cap L_{9}=[a, 0, 1]$, the defining equation of the line $L_{6}$ can be written as $y=-\frac{1}{a}x+z$. Therefore the triple $e_{10}=L_4\cap L_5\cap L_{6}$ can be given by $[a(1-b), b, 1]$. Notice that the triple $e_2=L_2\cap L_5\cap L_{10}$ can be given by $[0, 0, 1]$.

The defining equation of the line $L_5$ is $y=\frac{b}{a(1-b)}x$.  Then we can find the triples $e_6=L_2\cap L_5\cap L_7=[\frac{a(1-b)}{b}, 1, 1]$.

Since $L_7$ passes through $e_6$ and $e_5=L_1\cap L_{10}=[1, 0, 1]$, its defining equation can be written as $y=\frac{b}{a-b-ab}(x-1)$. 

Now only the defining equation of the line $L_8$ was not written down yet.  Since $L_8$ passes through the points $e_7=L_2\cap L_8\cap L_{10}=[1, 1, 1]$ and $e_8=L_3\cap L_4\cap L_8=[0, b, 1]$, we can write  its defining equations  as $y=(1-b)x+bz$.

We have used all but $e_{11}=L_7\cap L_8\cap L_9$ to calculate the defining equations. The parameters $a$ and $b$ must satisfy Property \ref{property:det} for the determinant of the coefficient matrix of $L_4$, $L_5$ and $L_6$, i.e.
\[\det\begin{pmatrix}1&\frac{b}{a-b-ab}& -\frac{b}{a-b-ab}\\ 1&1-b& b\\ 0&-a&1 \end{pmatrix}=0.\]
Simplify this equation to get $a^2b^2-2a^2b+a^2-ab-b^2+b=0$.   So we see the moduli space is a curve defined by $a^2b^2-2a^2b+a^2-ab-b^2+b=0$.

We claim that this equation defines an irreducible curve in $\mathbb{C}^2$. Assume contrarily that the polynomial $p(a, b)=a^2b^2-2a^2b+a^2-ab-b^2+b=a^2(b-1)^2-ab-b(b-1)$ is reducible. Then by the definition of reducibility it must be factored as $p(a, b)=[f_1(b)a+g_1(b)][f_2(b)a+g_2(b)]$, where $f_1(b)$, $f_2(b)$, $g_1(b)$ and $g_2(b)$ are polynomials in $\mathbb{C}[b]$.  Viewing $p(a, b)$ as a quadratic polynomial of $a$, we see that the discriminant $b^2+4b(b-1)^3=(f_1(b)g_2(b)-g_1(b)f_2(b))^2$ must be a perfect square. We check that $b^2+4b(b-1)^3$ is not a perfect square. In fact, it has no double root at all. 

Therefore, the moduli space is irreducible.

Choose the two evaluations $a=4$ and $b=0.62404$ to get the arrangement pictured in Figure \ref{11.B.3.a}.
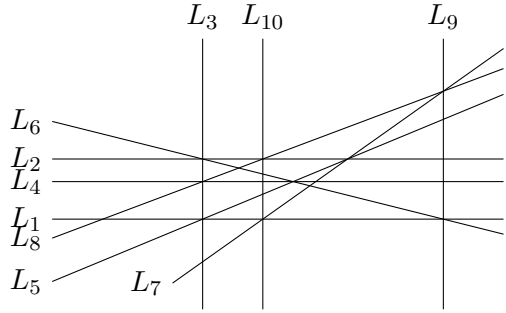
\begin{figure}[htbp]
\centering

\pgfmathsetmacro{\a}{4}
\pgfmathsetmacro{\b}{0.62404}

\begin{tikzpicture}[domain=-1.5:3,scale=0.8]
\draw plot (0, \x) node[above]{$L_{3}$};
\draw plot (1, \x) node[above]{$L_{10}$};
\draw plot (4, \x) node[above]{$L_9$};

\draw[domain=5:-2.5] plot (\x, 0) node[left]{$L_1$};
\draw[domain=5:-2.5] plot (\x, 1) node[left]{$L_2$};
\draw[domain=5:-2.5] plot (\x, 0.62404) node[left]{$L_4$};

\draw[domain=5:-2.5] plot (\x, {(-1/4)*\x+1}) node[left]{$L_6$};
\draw[domain=5:-2.5] plot (\x, {(0.62404/(4*(1-0.62404)))*\x}) node[left]{$L_{5}$};
\draw[domain=5:-0.5] plot (\x, {(0.62404/(4-0.62404-4*0.62404))*(\x-1)}) node[left]{$L_{7}$};
\draw[domain=5:-2.5] plot (\x, {(1-0.62404)*\x+0.62404)}) node[left]{$L_{8}$};
\end{tikzpicture}
\caption{The arrangement 11.B.3.a.iii.\label{11.B.3.a}}
\end{figure}
\end{example}

\begin{example}[A moduli space consists of two complex conjugate points]\label{example:cpx-cnjgt} Consider the arrangement in Lemma \ref{lem:12Reduction-b} obtained by adding the line $DFH$ to the ($9_3$).ii. arrangement (see Table \ref{tab:93.ii.EQNS}), i.e. add a line passing through the intersection points $L_2\cap L_5$, $L_3\cap L_6$, and $L_4\cap L_9$.

Assume that the lines $L_3$,  $L_2$, $L_1$, $L_4$, $L_5$, and $L_6$ are defined by equations  $y=0$,  $y=z$, $y=bz$,   $x=0$,  $x=z$, and $x=az$, respectively, as depicted in 
 Figure \ref{9.3.ii.DFH}.
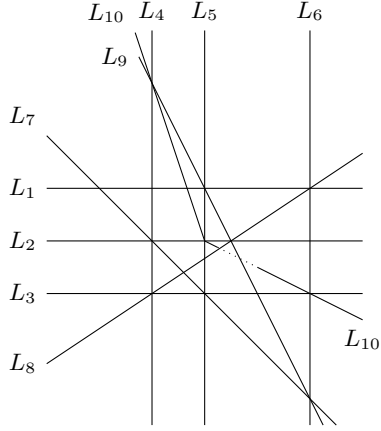
\begin{figure}[htbp]
\centering
\begin{tikzpicture}[scale=0.7]
\tikzstyle{every node}=[font=\footnotesize]
\draw[domain=-2.5:5] plot (0, \x) node[above]{$L_4$};
\draw[domain=-2.5:5] plot (1, \x) node[above]{$L_5$};
\draw[domain=-2.5:5] plot (3, \x) node[above]{$L_6$};
\draw[domain=4:-2] plot (\x, 0) node[left]{$L_3$};
\draw[domain=4:-2] plot (\x, 1) node[left]{$L_2$};
\draw[domain=4:-2] plot (\x, 2) node[left]{$L_1$};
\draw[domain=4:-2] plot (\x, {2/3*\x}) node[left]{$L_8$};
\draw[domain=3.25:-0.25] plot (\x, {-2*\x+4}) node[left]{$L_9$};
\draw[domain=3.5:-2] plot (\x, {-\x+1}) node[above left]{$L_7$};
\draw[domain=1:-0.32] plot (\x, {-3*\x+4}) node[above left]{$L_{10}$};
\draw[domain=1:1.25] plot (\x, {-1/2*(\x-3)});
\draw[domain=1.25:2,dotted] plot (\x, {-1/2*(\x-3)});
\draw[domain=2:4] plot (\x, {-1/2*(\x-3)}) node[below]{$L_{10}$};
\end{tikzpicture}
\caption{The arrangement ($9_3$).ii.DFH \label{9.3.ii.DFH}}
\end{figure}

By the same method used in the previous examples, we can write down the equations of the rest of lines:
$L_7: y=-x+z$, $L_8: y=\frac{b}{a} x$, $L_9: y=\frac{b(b-1)}{b-a}x-\frac{b(1-a)}{b-a}z$ and $L_{10}: y=\frac{1}{1-a}(x-1)$, where $a$ and $b$ satisfy the following equations $a=b^2-b+1$ and $b^2+1=0$.

Solving the system of the two equations, we get two pairs of solutions $(a=-i, b=i)$ and $(a=i, b=-i)$. Replacing $a$ and $b$ in the defining equations of the ten lines by $\mp i$ and $\pm i$, respectively, we see that the line $L_{10}$ given by the equation $y=\frac{1}{1+i}(x-z)$ is indeed distinct from the other nine lines.

Now we check whether there will be extra multiple points forced to appear when adding the tenth line. If there is an extra multiple point, then it must be on the line $L_{10}$.  By checking the Figure \ref{9.3.ii.DFH} or Figure \ref{fig:932}, we see that only the points $P=L_1\cap L_7=[1-i, i, 1]$ and $Q=L_7\cap L_8=[-1, 1, 0]$ may fall on the line $L_{10}$ accidentally. However, by plugging the two points into the equations of $L_{10}$, we get contradictions.

Therefore,  $P$ and $Q$ cannot be on the line $L_{10}$, which means that adding the line $L_{10}$ introduces no extra multiple points.
\end{example}

\begin{remark}
\label{rem:checkdegen}
This process outlined in Example \ref{example:cpx-cnjgt} for the case ($9_3$).ii.DFH also occurs for the cases ($9_3$).iii.ACG, ($9_3$).iii.AEG, and ($9_3$).iii.BEG.
\end{remark}

\section{Arrangements of nine lines with nine and ten triples}
\label{sec:9}

The purpose of this section is to highlight previous results for nine lines that will be used in reduction arguments for ten lines in Sections \ref{sec:13} and \ref{sec:12}.


\begin{definition}
\label{def:Ck}
\cite[Definition 3.1]{NazYos}
Let $k\in\mathbb{N}$.  We say a line arrangement $\mathcal{A}$ is of type $C_k$ if $k$ is the minimum number of lines of $\mathcal{A}$ containing all points of multiplicity at least three.
\end{definition}

\begin{definition}
\label{def:simpleC3}
\cite[Definition 3.4]{NazYos}
Let $\mathcal{A}$ be an arrangement of type $C_3$.  Then $\mathcal{A}$ is a \emph{simple $C_3$} arrangement if there are three lines $L_1,L_2,L_3\in\mathcal{A}$ such that all points of multiplicity at least three are contained in $L_1\cup L_2\cup L_3$ and one of the following holds:
\begin{itemize}
	\item $L_1\cap L_2\cap L_3=\emptyset$ and one of the lines contains only one point of multiplicity at least three apart from the other two lines; or
	\item $L_1\cap L_2\cap L_3\neq\emptyset$.
\end{itemize}
\end{definition}


From these definitions, Nazir and Yoshinaga proved the following results.

\begin{theorem}[Theorem 3.2 of \cite{NazYos}]
\label{thm:C012}
Let $\mathcal{A}=\{H_1,\ldots,H_n\}$ be a line arrangement in $\mathbb{P}^2_\mathbb{C}$ of class $C_{\leq2}$ (i.e., either $C_0$, $C_1$ or $C_2$). Then the realization space $R(I(A))$ 
is irreducible.
\end{theorem}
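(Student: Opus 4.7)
The plan is to induct on $n = |\mathcal{A}|$ with Lemma \ref{lemma:extend-irreduciblity} as the engine. The base case covers arrangements with $n \leq 3$ lines, for which the realization space modulo $PGL(3,\mathbb{C})$ is either empty or a single orbit, and is therefore trivially irreducible.

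For the inductive step, the strategy is to exhibit a line $L_n \in \mathcal{A}$ passing through at most two multiple points. Granting such a line, the deleted arrangement $\mathcal{A}' = \mathcal{A}\setminus\{L_n\}$ is still of class $C_{\leq 2}$, because removing a line cannot create new multiple points, so the covering set of $\mathcal{A}$ still covers those of $\mathcal{A}'$. The inductive hypothesis then yields irreducibility of $\mathcal{M}_{\mathcal{A}'}$, and Lemma \ref{lemma:extend-irreduciblity} propagates it to $\mathcal{M}_\mathcal{A}$.

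The existence of an appropriate $L_n$ follows directly from the definition of $C_k$. In class $C_0$ there are no multiple points at all, so every line qualifies. In class $C_1$, fix a single covering line $\ell$; any $L_n \in \mathcal{A}\setminus\{\ell\}$ can meet a multiple point only at the intersection $L_n \cap \ell$, so passes through at most one. In class $C_2$, fix covering lines $\ell_1,\ell_2$; any $L_n \notin \{\ell_1,\ell_2\}$ meets a multiple point only at $L_n \cap \ell_1$ or $L_n \cap \ell_2$, hence at most two. Since these covering sets have size at most two, for $n \geq 3$ there is always a line outside the covering set available to remove, and iterating drives $n$ down to the base case.

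The only potential obstacle lies in verifying that the inductive reduction really preserves the $C_{\leq 2}$ hypothesis, but this is immediate from the observation that the multiple-point set of $\mathcal{A}'$ is a subset of that of $\mathcal{A}$ (a triple can become a double upon deleting a line, but no new multiple points are born). All of the substantive content is thus absorbed into Lemma \ref{lemma:extend-irreduciblity}; this theorem itself reduces to a clean structural induction driven by the definition of $C_k$.
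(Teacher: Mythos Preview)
The paper does not supply its own proof of this theorem; it is quoted from \cite{NazYos} (their Theorem 3.2) and used as a black box. So there is nothing in the paper to compare your argument against directly.

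That said, your argument is correct and is almost certainly the one Nazir--Yoshinaga had in mind, since the key tool you invoke---Lemma~\ref{lemma:extend-irreduciblity}---is their Lemma 2.4 from the same paper. The induction is clean: for $C_{\leq 2}$ any line outside the (at most two-element) covering set meets at most two multiple points, deletion preserves $C_{\leq 2}$ since multiple points can only disappear, and the base cases $n\leq 3$ are single $PGL(3,\mathbb{C})$-orbits. There are no gaps.
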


\begin{theorem}[Theorem 3.5 of \cite{NazYos}]
\label{thm:simpleC3}
Let $\mathcal{A}$ be an arrangement of $C_3$ of simple type. Then the realization space $R(I(A))$ 
 is irreducible.
\end{theorem}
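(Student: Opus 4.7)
The plan is to use $PGL(3, \mathbb{C})$ to normalize the three distinguished lines $L_1, L_2, L_3$ of Definition \ref{def:simpleC3} to a standard position (as in Lemma \ref{grid}), and then iteratively apply Lemma \ref{lemma:extend-irreduciblity} to peel off non-skeleton lines one at a time, reducing the realization space to that of $\{L_1, L_2, L_3\}$, which is a single point and hence irreducible. The two bullets of the definition require separate analyses of the peeling order.

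For the case $L_1 \cap L_2 \cap L_3 = \emptyset$ with $L_3$ carrying only one multiple point $q$ off $L_1 \cup L_2$, every non-skeleton line $L$ meets $L_3$ at exactly one point, which by the constraint on $L_3$ must be either $L_3 \cap L_1$, $L_3 \cap L_2$, $q$, or a non-multiple point. In the first, second, and fourth sub-cases, $L$ passes through at most two multiple points of $\mathcal{A}$ (one on $L_3$ together with at most one more on $L_1 \cup L_2$), so Lemma \ref{lemma:extend-irreduciblity} applies directly. After peeling off all such lines, the reduced arrangement is $\{L_1, L_2, L_3\}$ together with the pencil through $q$; since any two distinct lines through $q$ meet $L_1$ at distinct points and $L_2$ at distinct points, each remaining pencil line passes through only $q$ as a multiple point of the reduced arrangement and can be peeled off iteratively.

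For the case $L_1 \cap L_2 \cap L_3 = \{p\}$, non-skeleton lines split into the pencil at $p$ (each passing through just the multiple point $p$) and non-pencil lines meeting $L_1, L_2, L_3$ at three distinct points, of which up to three may be multiple points of $\mathcal{A}$. Non-pencil lines with at most two multiple points, together with pencil lines, can be peeled off first via Lemma \ref{lemma:extend-irreduciblity}. The remaining difficulty is the ``triangle-crossing'' lines, i.e. non-pencil lines passing through three multiple points, one on each $L_i$. My plan is to show that once the easy lines have been peeled, each intersection $L \cap L_i$ of a triangle-crossing line $L$ with a skeleton line remains a multiple point of the reduced arrangement only if another triangle-crossing line passes through that same intersection; absent such coincidence, $L$ has at most two (indeed often zero) multiple points in the reduced arrangement and can be peeled off.

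The main obstacle is the residual Case 1 situation in which two distinct triangle-crossing lines $L$ and $M$ share an intersection with some $L_i$: then $L \cap L_i = M \cap L_i$ remains a multiple point after all peelable lines have been removed, and Lemma \ref{lemma:extend-irreduciblity} no longer applies. To handle this I would abandon iterated peeling and switch to a direct parameterization: fix the positions of all multiple points on $L_1, L_2, L_3$ (an irreducible Zariski open subset of a product of affine lines) and then express each triangle-crossing line by two of its three intersections with the $L_i$, subject to a collinearity equation that is linear in the third coordinate. The hardest step will be to verify that the combined system of collinearity constraints coming from several coexisting triangle-crossing lines remains consistent and cuts out an irreducible subvariety of the base, which I expect to follow from a dimension count together with the linear-in-one-variable character of each individual constraint.
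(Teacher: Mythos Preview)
The paper does not contain a proof of this statement; Theorem~\ref{thm:simpleC3} is simply quoted from \cite{NazYos} without argument, so there is no in-paper proof to compare your proposal against.

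On the merits of your proposal: your treatment of the first bullet (generic $L_1, L_2, L_3$ with $L_3$ carrying a single extra multiple point $q$) is correct. The peeling order you give---first all non-skeleton lines not through $q$ (each meeting the skeleton in at most two multiple points), then the pencil through $q$ (each such line meeting only $q$, and possibly $L_1\cap L_2$, as multiple points in the reduced arrangement)---works and terminates at three generic lines, whose realization space is a point.

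Your treatment of the second bullet (concurrent $L_1, L_2, L_3$) correctly isolates the obstruction: triangle-crossing lines whose three skeleton-intersections are each shared with another such line cannot be removed via Lemma~\ref{lemma:extend-irreduciblity}. But your fallback parameterization stops just short of the decisive observation. You describe each collinearity constraint as ``linear in the third coordinate'' and then worry about combining several such constraints via a dimension count. The right move is to place $p = L_1 \cap L_2 \cap L_3$ on the line at infinity, so that $L_1, L_2, L_3$ become three parallel affine lines $y = 0,\, y=1,\, y=a$. A transversal meeting them at $x$-coordinates $\alpha, \beta, \gamma$ then satisfies $\gamma = (1-a)\alpha + a\beta$, which is linear in \emph{all} the position parameters simultaneously, not merely in $\gamma$. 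Hence the full system of collinearity constraints (one per triangle-crossing line) is a system of linear equations in the position parameters and cuts out an affine subspace of the parameter space; an affine subspace is irreducible whenever nonempty, and nonemptiness is given since $\mathcal{A}$ itself is a realization. No dimension count is needed, and a dimension count alone would not yield irreducibility anyway. With this observation your argument closes; as written, your final paragraph leaves the key step as an expectation rather than a proof.
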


The following is a more historical definition.

\begin{definition}
\label{def:n3}
An ($n_3$) arrangement is one with $n$ lines, $n$ triples, and no points of higher multiplicity.
\end{definition}

  The textbook \emph{Configurations of points and lines} by Gr\"unbaum \cite{Grun} gives a table summarizing the numbers of such combinatorial and geometric arrangements for small $n$ in Theorem 2.2.1. and Table 2.2.1. on p.69.

In the next proposition we consider the case $n=9$, first appearing as early as the 1880's, which Gr\"unbaum attributes to Kantor \cite{131}, Martinetti \cite{152}, Schr\"{o}ter \cite{199}, and again to Levi \cite[p.103]{145}, Hilbert and Cohn-Vossen \cite{126}, and Gropp \cite{94}.



\begin{proposition}[see Theorem 2.2.1. of \cite{Grun} or Proposition 3.7 of \cite{Fei9}]
\label{prop:9-3}
Let $\mathcal{A}$ be an arrangement of nine projective lines with nine triple points and no higher multiplicity points. If each line of $\mathcal{A}$ passes through exactly three triple points, then $\mathcal{A}$ is lattice isomorphic to one of the three arrangements given in Figure \ref{fig:9-3}.

We provide configuration tables in Table \ref{tab:93} (that differ from those in \cite{Grun}) and our own equations of these lines in Table \ref{tab:93EQNS}.

Note that this result holds for combinatorial as well as geometric arrangements.
\renewcommand{\thesubfigure}{}
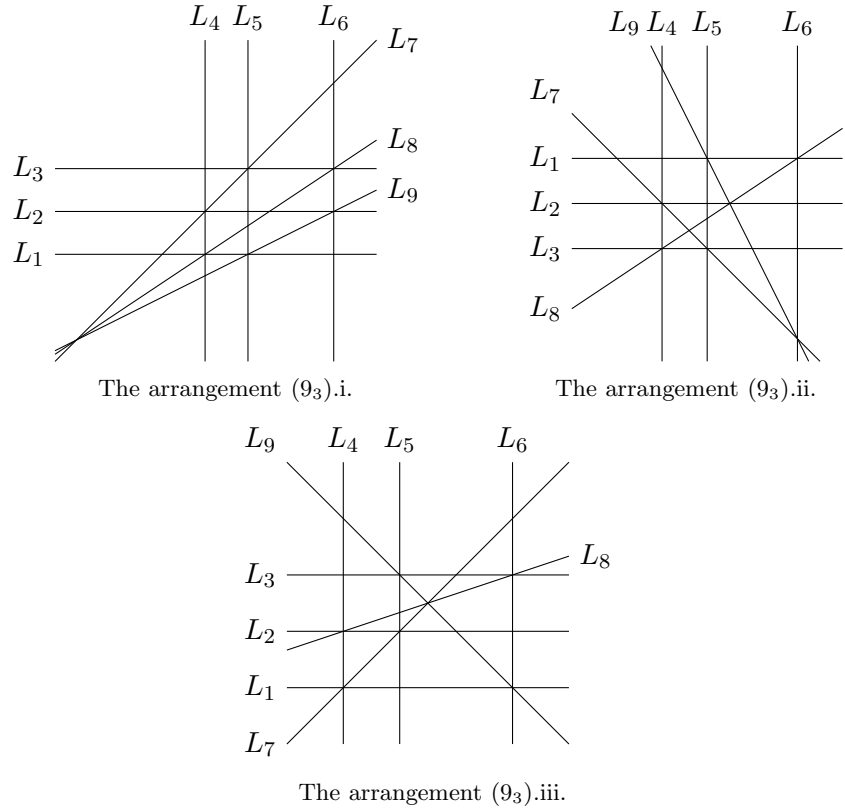
\begin{figure}[htbp]
\centering
\subfigure[The arrangement $(9_3)$.i.]{\begin{tikzpicture}[scale=0.57]
\draw[domain=-2.5:5]  plot (0, \x) node[above]{$L_4$};
\draw[domain=-2.5:5]  plot (1, \x) node[above]{$L_5$};
\draw[domain=-2.5:5]  plot (3, \x) node[above]{$L_6$};
\draw[domain=4:-3.5] plot (\x, 0) node[left]{$L_1$};
\draw[domain=4:-3.5] plot (\x, 1) node[left]{$L_2$};
\draw[domain=4:-3.5] plot (\x, 2) node[left]{$L_3$};
\draw[domain=-3.5:4] plot (\x, {\x+1}) node[right]{$L_7$};
\draw[domain=-3.5:4] plot (\x, {2*\x/3}) node[right]{$L_8$};
\draw[domain=-3.5:4] plot (\x, {(\x-1)/2}) node[right]{$L_9$};
\end{tikzpicture}
}
\hspace{2em}
\subfigure[The arrangement $(9_3)$.ii.]{
\begin{tikzpicture}[scale=0.6]
\draw[domain=-2.5:4.5] plot (0, \x) node[above]{$L_4$};
\draw[domain=-2.5:4.5] plot (1, \x) node[above]{$L_5$};
\draw[domain=-2.5:4.5] plot (3, \x) node[above]{$L_6$};
\draw[domain=4:-2] plot (\x, 0) node[left]{$L_3$};
\draw[domain=4:-2] plot (\x, 1) node[left]{$L_2$};
\draw[domain=4:-2] plot (\x, 2) node[left]{$L_1$};
\draw[domain=4:-2] plot (\x, {2/3*\x}) node[left]{$L_8$};
\draw[domain=3.25:-0.25] plot (\x, {-2*\x+4}) node[above left]{$L_9$};
\draw[domain=3.5:-2] plot (\x, {-\x+1}) node[above left]{$L_7$};
\end{tikzpicture}
}
\hspace{2em}
\subfigure[The arrangement $(9_3)$.iii.]{\begin{tikzpicture}[scale=0.75]
\draw[domain=-1:4] plot (0, \x) node[above]{$L_4$};
\draw[domain=-1:4] plot (1, \x) node[above]{$L_5$};
\draw[domain=-1:4] plot (3, \x) node[above]{$L_6$};
\draw[domain=4:-1] plot (\x, 0) node[left]{$L_1$};
\draw[domain=4:-1] plot (\x, 1) node[left]{$L_2$};
\draw[domain=4:-1] plot (\x, 2) node[left]{$L_3$};
\draw[domain=4:-1] plot (\x, \x) node[left]{$L_7$};
\draw[domain=-1:4] plot (\x, {\x/3+1}) node[right]{$L_8$};
\draw[domain=4:-1] plot (\x, {-\x+3}) node[above left]{$L_9$};
\end{tikzpicture}}
\caption{\label{fig:9-3} The three ($9_3$) arrangements of nine lines with nine triples and no higher multiplicity points as appearing in \cite{Fei9}.}
\end{figure}
\end{proposition}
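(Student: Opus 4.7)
The proof I would give is purely combinatorial, as geometric realizability is witnessed by the three figures; only the classification up to lattice isomorphism needs argument. Fix one triple point and call it $p_1 = L_1 \cap L_2 \cap L_3$. Because any two distinct lines meet in a unique point, no other triple can contain two of $L_1, L_2, L_3$, so every other triple uses at most one of these three lines. Counting: each of $L_1, L_2, L_3$ hosts two further triples, giving exactly six ``mixed'' triples (each using exactly one line of $\{L_1,L_2,L_3\}$ and two of $\{L_4,\dots,L_9\}$); the remaining $9-1-6=2$ triples are ``pure'', involving only lines from $\{L_4,\dots,L_9\}$. Writing $a_i$ for the number of mixed and $b_i$ for the number of pure triples on $L_i$ with $i\ge 4$, we have $a_i+b_i=3$, $\sum a_i=12$, $\sum b_i=6$.

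The first split is on the structure of the two pure triples, which (again because two lines meet in a unique point) share either zero or one common line.

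\emph{Case A (disjoint pure triples).} Up to relabeling, $\{L_4,L_5,L_6\}$ and $\{L_7,L_8,L_9\}$ are the pure triples, and every $L_i$ with $i\ge 4$ has $a_i=2$. A mixed triple on $L_1$ cannot contain two of $\{L_4,L_5,L_6\}$, since that would force a quadruple point through $L_1$ and the already-coincident lines; so every mixed triple takes one vertex from each of the three blocks $\{L_1,L_2,L_3\}$, $\{L_4,L_5,L_6\}$, $\{L_7,L_8,L_9\}$. The mixed triples thus form a $2$-regular $3$-partite $3$-uniform hypergraph on $3+3+3$ vertices, which I would encode as a $3\times 3$ matrix whose $(a,b)$-entry names the element of $\{L_7,L_8,L_9\}$ completing the triple, with blank entries forming a permutation matrix; up to the $(S_3)^3$ action on the three blocks this matrix collapses to one of two inequivalent fillings, matching arrangements $(9_3)$.i and $(9_3)$.iii (or $(9_3)$.ii, to be sorted by comparison with the tables).

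\emph{Case B (pure triples sharing one line).} Up to relabeling the shared line is $L_4$ and the pure triples are $\{L_4,L_5,L_6\}$, $\{L_4,L_7,L_8\}$, so $L_9$ lies on no pure triple; then $a_9=3$, forcing $L_9$ to share a triple with each of $L_1,L_2,L_3$ individually (using three distinct third lines drawn from $\{L_5,L_6,L_7,L_8\}$). The remaining six degrees $a_4=1$, $a_5=a_6=a_7=a_8=2$ together with the constraint that each line of $\{L_1,L_2,L_3\}$ hosts two mixed triples determine the remaining incidences uniquely up to the symmetry swapping the pair $\{L_5,L_6\}$ with $\{L_7,L_8\}$, yielding the third combinatorial type.

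The final step is to confront the three types produced above with the three arrangements in Figure \ref{fig:9-3} and check that they are pairwise non-isomorphic — this is straightforward from invariants such as the number of common lines of triples through a fixed point, or by comparing the configuration tables of Table \ref{tab:93} directly. The main obstacle is the bookkeeping in Case A: one must show that, of the many $3\times 3$ fillings, only the essentially distinct ones survive the $S_3\times S_3\times S_3$ symmetry and that each indeed realizes without forcing an accidental fourth triple on a line (which would violate $\ell_i=0$ for $i\ge 5$ and the assumption of exactly three triples per line). Once this bookkeeping is organized — e.g.\ by fixing the blank pattern as a permutation matrix and then enumerating the column-multiset patterns of the entries — the three cases arise cleanly and close out the classification.
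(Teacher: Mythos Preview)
The paper does not prove this proposition itself; it is quoted from Gr\"unbaum (Theorem~2.2.1) and Ye (Proposition~3.7), so there is no in-paper argument to compare against. Your overall strategy --- fix one triple $p_1=L_1\cap L_2\cap L_3$, show the remaining eight triples split as six ``mixed'' and two ``pure'' among $L_4,\dots,L_9$, then branch on whether the two pure triples share a line --- is the natural one and matches what is done in the cited sources.

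There is, however, a concrete error in Case~B. You write that the three mixed triples through $L_9$ use ``three distinct third lines drawn from $\{L_5,L_6,L_7,L_8\}$'', excluding the shared line $L_4$. In fact the opposite is forced: $L_4$ already meets each of $L_5,L_6$ at the first pure triple and each of $L_7,L_8$ at the second, so any mixed triple containing $L_4$ together with one of $L_5,L_6,L_7,L_8$ would sit at a pure-triple point and create a quadruple. Since $a_4=1$, the unique mixed triple through $L_4$ must therefore be $\{L_i,L_4,L_9\}$ for some $i\in\{1,2,3\}$, and $L_4$ \emph{is} necessarily one of the three third lines. (You can see this in the $(9_3)$.ii table: taking $e_1$ as $p_1$, the pure triples $e_8,e_9$ share $L_6$, and the off-line $L_8$ meets $L_6$ at the mixed triple $e_3$.) Your subsequent uniqueness claim for Case~B is thus built on a false premise and is not established as written.

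Case~A is correct in outline but, as you acknowledge, the reduction to exactly two orbits is only sketched. One clean way to finish it: the six mixed triples project to six distinct edges in $K_{3,3}$ on $\{L_4,L_5,L_6\}\times\{L_7,L_8,L_9\}$, each vertex of degree~$2$; the complement is a perfect matching, so the six edges form a $6$-cycle. The labels $L_1,L_2,L_3$ then give a partition of the cycle's edges into three non-adjacent pairs, and up to dihedral symmetry there are exactly two such partitions (all three pairs opposite, versus one opposite pair and two at distance~$2$), matching $(9_3)$.i and $(9_3)$.iii respectively.
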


\begin{table}[htbp]
{ \begin{tabular}{cc}

\begin{tabular}{|ccc|ccc|ccc|}
\hline
$L_1$ &	$L_2$ &	$L_3$ & $L_4$ &	$L_5$ &	$L_6$ & $L_7$ &	$L_8$ & $L_9$\\
\hline
$e_1$ & $e_1$ & $e_1$ & $e_8$	&	$e_8$ & $e_8$ & $e_9$ & $e_9$	&	$e_9$ \\
$e_2$ & $e_4$ & $e_6$ & $e_2$	&	$e_3$ & $e_5$ & $e_4$ & $e_2$	&	$e_3$  \\
$e_3$ & $e_5$ & $e_7$ & $e_4$ &	$e_6$ & $e_7$ & $e_6$ & $e_7$	&	$e_5$ \\
\hline
\end{tabular}

 & 

\begin{tabular}{|ccc|ccc|ccc|}
\hline
$L_1$ &	$L_2$ &	$L_3$ & $L_4$ &	$L_5$ &	$L_6$ & $L_7$ &	$L_8$ & $L_9$\\
\hline
$e_1$ & $e_1$ & $e_1$ & $e_8$	&	$e_8$ & $e_8$ & $e_4$ & $e_3$	&	$e_2$ \\
$e_2$ & $e_4$ & $e_6$ & $e_4$	&	$e_2$ & $e_3$ & $e_7$ & $e_5$	&	$e_5$  \\
$e_3$ & $e_5$ & $e_7$ & $e_6$ &	$e_7$ & $e_9$ & $e_9$ & $e_6$	&	$e_9$ \\
\hline
\end{tabular}

\\

 &  \\

The arrangement ($9_3$).i. & The arrangement ($9_3$).ii.\\

 &  \\

\begin{tabular}{|ccc|ccc|ccc|}
\hline
$e_1$ & $e_1$ & $e_1$ & $e_8$	&	$e_8$ & $e_8$ & $e_9$ & $e_9$	&	$e_9$ \\
$e_2$ & $e_4$ & $e_6$ & $e_2$	&	$e_5$ & $e_3$ & $e_2$ & $e_4$	&	$e_3$  \\
$e_3$ & $e_5$ & $e_7$ & $e_4$ &	$e_6$ & $e_7$ & $e_5$ & $e_7$	&	$e_6$ \\
\hline
\end{tabular}

& \\

 &  \\

The arrangement ($9_3$).iii. & \\

 &  
\end{tabular}
}
\caption{The arrangements ($9_3$).i., ($9_3$).ii., and ($9_3$).iii. given as configuration tables.}
\label{tab:93}
\end{table}

\begin{table}[htbp]\renewcommand{\arraystretch}{2.25}
\begin{tabular}{|c||c|c|c|c|c|}
\hline
\multicolumn{6}{|c|}{($9_3$). Nine lines with nine triples}\\
\hline
\hline
Arr. & $y=0,z,bz$ & $x=0,z,az$ & $y=Ax$ &	$y=Bx+z$ &	$y=C(x-z)$ \\
\hline
\hline
i. & $L_1,L_2,L_3$	& $L_4,L_5,L_6$	& $L_8$:  $\frac{b}{a}$ & $L_7$:  $b-1$	&	$L_9$: $\frac{1}{a-1}$ \\
\hline
\hline
ii. & $L_3,L_2,L_1$	& $L_4,L_5,L_6$	& $L_8$: $\frac{b}{a}$ & $L_7$:  $-1$	&	{}\\
\hline
 & \multicolumn{5}{|c|}{with $L_9: y=\frac{b(b-1)}{b-a}(x+\frac{b(1-a)}{b-a}z)$ and satisfying $a-(b^2-b+1)=0$}\\
\hline
\hline
iii. & $L_1,L_2,L_3$	& $L_4,L_5,L_6$	& $L_7$:  $1$ & $L_8$: $\frac{b-1}{a}$ &	{} \\
\hline
 & \multicolumn{5}{|c|}{with $L_9: y=\frac{b}{1-a}(x-az)$ and satisfying $a-(b+1)=0$}\\
\hline
\end{tabular}
\caption{Equations for geometric arrangements ($9_3$).}
\label{tab:93EQNS}
\end{table}

Two of these three arrangements are $C_{3}$:  all triple points lie on three lines.  For ($9_3$).i. three such lines are $L_3,L_4,L_9$, and for ($9_3$).iii. three such lines are $L_1,L_5,L_8$.

These can be used to produce arrangements of nine lines with ten triples by making the three specified lines meet at a tenth triple.  In the first case this results in the Pappus arrangement (and so ($9_3$).i. can be thought of a degeneration of the Pappus arrangement), and in the second case this results in a non-geometric arrangement.

\begin{remark}
\label{rem:pappus}
The Pappus arrangement is named after Pappus of Alexandria, one of the last great Greek mathematicians of Antiquity, whose hexagon theorem states that given two sets of three collinear points, the three intersection points of six lines are collinear.  The arrangement comes from the geometry of this theorem.
\end{remark}


\begin{proposition}
\label{prop:comb9lines10triples}
There are three combinatorial arrangements of nine lines with ten triples and no points of higher multiplicity.  We provide configuration tables in Table \ref{tab:9ten} and equations of these lines in Table \ref{tab:10-3EQNS}.  Just two are geometric:  we show these in Figure \ref{fig:9lines10triples}.
\end{proposition}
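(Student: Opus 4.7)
I would first pin down the degree sequence. Fact~\ref{lemma:intersection-formula} with $k=9$ and $n_3=10$ forces $n_2=6$. A line through $i$ triple points uses $2i$ of the other eight lines, so $i\le 4$. Moreover, a line with four triples uses all eight other lines at triples, so every line meets each degree-four line at a triple. Consequently each line lies on at least $\ell_4$ triples, and combined with Lemma~\ref{lemma:eqns} this rules out $\ell_i>0$ for $i\le 2$ and leaves the unique solution $\ell_3=6,\ \ell_4=3$.

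Next I would exploit the three degree-four lines $M_1,M_2,M_3$. The observation above forces each pairwise intersection $M_i\cap M_j$ to be a triple, so there are two cases: case~(A), the three $M_i$ share a common triple $P$; and case~(B), $M_i\cap M_j$ are three distinct triples, each of the form $\{M_i,M_j,N\}$ for some degree-three line $N$.

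In case~(A), each of the nine remaining triples has the form $\{M_i,N_j,N_k\}$ with $N_j,N_k$ degree-three lines, so each $M_i$ determines a perfect matching on $\{N_1,\ldots,N_6\}$. Two distinct triples cannot share two lines, hence the three matchings are edge-disjoint, and their union is a $1$-factorable $3$-regular graph on six vertices, which is either $K_{3,3}$ or the triangular prism. These yield two combinatorial types, which I would identify with the extensions of $(9_3)$.i.\ and $(9_3)$.iii.\ produced by making the $C_3$-lines concur (as discussed after Table~\ref{tab:93EQNS}). In case~(B), the incidence count shows that the three ``big'' triples together with six triples of the form $\{M_i,N_a,N_b\}$ account for $12$ of the $18$ required degree-three-line incidences, forcing exactly one remaining triple to consist of three degree-three lines; a short symmetry argument then isolates a unique third lattice-isomorphism class which, unlike those of case~(A), cannot be obtained by deleting a single triple from any $(9_3)$.

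Finally, I would run each combinatorial type through Algorithm~\ref{algorithm}: fix the six-line grid via Lemma~\ref{grid}, translate the remaining triples into polynomial equations, and decide whether the resulting system is consistent. The Pappus arrangement is realized by Pappus' hexagon theorem (Remark~\ref{rem:pappus}), and the case~(B) arrangement admits a one-parameter family of realizations; the explicit defining equations go into Table~\ref{tab:10-3EQNS}. The main obstacle is the remaining case~(A) type: here I would show that the Grid-Lemma equations collapse into an inconsistent system (for instance a nonzero constant), which is essentially the projective incidence statement forbidden by Pappus' theorem, and hence $\mathcal{T}_C$ in the sense of Definition~\ref{def:mod} is empty.
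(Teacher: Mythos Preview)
Your combinatorial classification is correct and the route is genuinely different from the paper's. In case~(A) the paper degenerates the central triple to land in the $(9_3)$ classification (Proposition~\ref{prop:9-3}) and then reads off which of those are $C_3$; your perfect-matching argument is self-contained: the three matchings on the six degree-three lines are edge-disjoint, their union is a simple cubic graph on six vertices, and the only such graphs---$K_{3,3}$ and the prism---each admit a unique $1$-factorization up to isomorphism. In case~(B) the paper runs a four-way split on how many of the intersections $M_i\cap M_j$ are triples, but your observation that every line meets a degree-four line in a triple already forces all three to be, so those subcases are vacuous and your incidence count goes straight to the unique Nazir--Yoshinaga type. Your approach buys independence from Proposition~\ref{prop:9-3}; the paper's buys a direct link to the $(9_3)$ tables already in place.

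Two corrections. First, the step ``each line lies on at least $\ell_4$ triples'' does not quite follow: two degree-four lines may meet a given line $L$ at the \emph{same} triple $\{L,M_i,M_j\}$, so the argument only yields $\lceil\ell_4/2\rceil$ triples on $L$, enough to kill $\ell_0,\ell_1$ but not $\ell_2$. The paper sidesteps this by invoking Assumption~\ref{ass:reduce} (in force here), after which $\ell_3=6,\ \ell_4=3$ is immediate from Lemma~\ref{lemma:eqns}. Second, your claim that the case~(B) arrangement admits a one-parameter family of realizations is wrong: Table~\ref{tab:10-3EQNS} shows the Nazir--Yoshinaga moduli space is zero-dimensional, namely the two complex-conjugate points $a=b=\pm i$. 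It is Pappus, in case~(A), whose moduli is one-dimensional (only the relation $a=b$ is imposed). This slip does not affect the Proposition, which only asks which types are geometric, but you should not carry the wrong dimension forward.
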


\begin{proof}
There are nine lines with ten triples, and so exactly three lines $L_1,L_2,L_3$ must each contain four triples.

If these three lines form a central subarrangement, they intersect at a triple, say $e_1$, and the arrangement is in fact simple $C_3$.  In this case, the triple $e_1$ can be degenerated to obtain an arrangement of nine lines with nine triples that is also simple $C_3$.  Therefore it must be either ($9_3$).i. or ($9_3$).iii., as discussed above.  In the first case our arrangement of nine lines with ten triples is the Pappus arrangement; in the second case we obtain a non-geometrically realizable one.

Otherwise these three lines form a generic subarrangement.  We proceed according to how many of these three doubles are in fact triples in the original
arrangement.

Suppose none are triples in the arrangement.  Then there must be four distinct triples on each of the three lines, totalling twelve triples, a contradiction.

Suppose one is a triple in the arrangement.  Then there must be three other distinct triples on each of the two lines meeting at this double, along with four distinct triples on the third line, totalling eleven triples, a contradiction.

Suppose two are triples in the arrangement.  Then although there are exactly ten triples accounted for here, one of these three lines passes through just one of these two triples that contains three additional triples.  However, this contradicts Fact \ref{fact:3col6lines} because there are just five other lines for these triples to appear.

Suppose all three are triples $e_1,e_2,e_3$ in the arrangement.  Then the three additional lines passing through these points must also intersect in $e_{10}$, the only point not on these first three lines.  This gives exactly one arrangement:  the Nazir-Yoshinaga arrangement given in Example 5.3 of \cite{NazYos} and proved to be the only geometric arrangement of nine lines with ten triples which is not simple $C_3$ in Proposition 3.8 of \cite{Fei9}.
\end{proof}


%

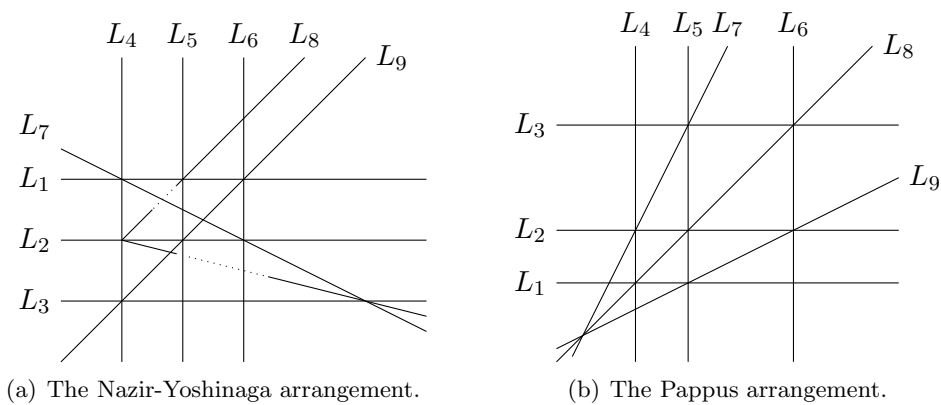
\begin{figure}[htbp]
\centering
\subfigure[The Nazir-Yoshinaga arrangement
.]{\label{fig:NY}
\begin{tikzpicture}[domain=5:-1, scale=0.81]
\draw plot (\x, 0) node[left]{$L_3$};
\draw plot (\x, 1) node[left]{$L_2$};
\draw plot (\x, 2) node[left]{$L_1$};
\draw[domain=-1:4] plot (0,\x) node[above]{$L_4$};
\draw[domain=-1:4] plot (1, \x) node[above]{$L_5$};
\draw[domain=-1:4] plot (2, \x) node[above]{$L_6$};
\draw[domain=-1:4] plot (\x, \x) node[right]{$L_9$};
\draw plot (\x, {-1/2*\x+2})  node[above left]{$L_7$};
\draw (0.9, 1.9)--(3, 4);
\draw (3,4) node[above]{$L_8$};
\draw[dotted] (0.9, 1.9)--(0.5, 1.5);
\draw (0.5, 1.5)--(0, 1);
\draw (0,1)--(0.9, {1-0.9/4});
\draw[dotted] (1.1, {1-1.1/4})--(2.4, {1-2.4/4});
\draw (2.4,{1-2.4/4})--(5, -1/4);
\end{tikzpicture}}
\hspace{2em}
\subfigure[The Pappus arrangement.]{\label{fig:Deg-Pappus}\begin{tikzpicture}[scale=0.7]
\draw[domain=-1.5:4.5]  plot (0, \x) node[above]{$L_4$};
\draw[domain=-1.5:4.5]  plot (1, \x) node[above]{$L_5$};
\draw[domain=-1.5:4.5]  plot (3, \x) node[above]{$L_6$};
\draw[domain=5:-1.5] plot (\x, 0) node[left]{$L_1$};
\draw[domain=5:-1.5] plot (\x, 1) node[left]{$L_2$};
\draw[domain=5:-1.5] plot (\x, 3) node[left]{$L_3$};
\draw[domain=-1.2:1.75] plot (\x, {2*\x+1}) node[above]{$L_7$};
\draw[domain=-1.5:4.5] plot (\x, {\x}) node[right]{$L_8$};
\draw[domain=-1.5:5] plot (\x, {1/2*(\x-1)}) node[right]{$L_9$};
\end{tikzpicture}}
\caption{The two geometric arrangements with nine lines and ten triples as appearing in \cite{Fei9}.}
\label{fig:9lines10triples}
\end{figure}

\begin{table}[htbp]
{ \begin{tabular}{cc}

\begin{tabular}{|ccc|ccc|ccc|}
\hline
$L_1$ &	$L_2$ &	$L_3$ & $L_4$ &	$L_5$ &	$L_6$ & $L_7$ &	$L_8$ & $L_9$\\
\hline
$e_1$ & $e_1$ & $e_1$ & $e_{10}$	&	$e_{10}$ & $e_{10}$ & $e_2$ & $e_3$	&	$e_4$ \\
$e_2$ & $e_5$ & $e_8$ & $e_2$	    &	$e_3$    & $e_4$    & $e_7$ & $e_5$	&	$e_6$  \\
$e_3$ & $e_6$ & $e_9$ & $e_5$     &	$e_6$    & $e_7$    & $e_9$ & $e_9$	&	$e_8$ \\
$e_4$ & $e_7$ &       & $e_8$     &	         &          &       &      	&	      \\
\hline
\end{tabular}

 & 

\begin{tabular}{|ccc|ccc|ccc|}
\hline
$L_1$ &	$L_2$ &	$L_3$ & $L_4$ &	$L_5$ &	$L_6$ & $L_7$ &	$L_8$ & $L_9$\\
\hline
$e_1$ & $e_1$ & $e_1$ & $e_9$	&	$e_9$ & $e_9$ & $e_{10}$ & $e_{10}$	&	$e_{10}$ \\
$e_2$ & $e_4$ & $e_7$ & $e_2$	&	$e_3$ & $e_6$ & $e_4$    & $e_2$	  &	$e_3$  \\
$e_3$ & $e_5$ & $e_8$ & $e_4$ &	$e_5$ & $e_8$ & $e_7$    & $e_5$	  &	$e_6$ \\
      & $e_6$ &       &       &	$e_7$ &       &          & $e_8$  	&	      \\
\hline
\end{tabular}

\\

 &  \\

The Nazir-Yoshinaga arrangement. & The Pappus arrangement.\\

  &  \\
 
 \begin{tabular}{|ccc|ccc|ccc|}
\hline
$e_1$ & $e_1$ & $e_1$ & $e_8$	&	$e_8$ & $e_8$ & $e_9$ & $e_9$	&	$e_9$ \\
$e_2$ & $e_4$ & $e_6$ & $e_2$	&	$e_5$ & $e_3$ & $e_2$ & $e_4$	&	$e_3$  \\
$e_3$ & $e_5$ & $e_7$ & $e_4$ &	$e_6$ & $e_7$ & $e_5$ & $e_7$	&	$e_6$ \\
    $e_{10}$&  &     &       &	$e_{10}$ &       &          & $e_{10}$  	&	      \\
\hline
\end{tabular}

& \\

 &  \\

{ Non-geometric degenerated ($9_3$).iii.} & \\

 & 
 
\end{tabular}
}
\caption{The arrangements of Proposition \ref{prop:comb9lines10triples} given as configuration tables.}
\label{tab:9ten}
\end{table}

\begin{table}[htbp]\renewcommand{\arraystretch}{2.25}
\begin{tabular}{|c||c|c|c|c|c|}
\hline
\multicolumn{6}{|c|}{Arrangements of nine lines with ten triples.}\\
\hline
\hline
Arr. & $y=0,z,bz$ & $x=0,z,az$ & $y=Ax$ &	$y=Bx+z$ &	$y=C(x-z)$ \\
\hline
\hline
NY. & $L_3,L_2,L_1$	& $L_4,L_5,L_6$	& $L_9$:  $1$ & $L_8$:  $a-1$	&	{} \\
\hline
 & \multicolumn{5}{|c|}{with $L_7: y=\frac{1-a}{a}x+az$ satisfying $a-b=0$ and $a^2+1=0$}\\
\hline
\hline
Pappus. & $L_1,L_2,L_3$	& $L_4,L_5,L_6$	& $L_8$:  $1$ & $L_7$:  $a-1$	&	$L_9$: $\frac{1}{a-1}$ \\
\hline
 & \multicolumn{5}{|c|}{satisfying $a-b=0$}\\
\hline
\hline
Non-geom. & $L_1,L_2,L_3$	& $L_4,L_5,L_6$	& $L_7$:  $1$ & $L_8$: $\frac{b-1}{a}$ &	{} \\
\hline
 & \multicolumn{5}{|c|}{with $L_9: y=\frac{b}{1-a}(x-az)$ and satisfying $a-(b+1)=0$}\\
 & \multicolumn{5}{|c|}{{ and $a+b-1=0$, a contradiction}}\\
\hline
\end{tabular}
\caption{Equations for arrangements of nine lines and ten triples.}
\label{tab:10-3EQNS}
\end{table}






 In this present work, whose focus is arrangements of ten lines, we are not concerned with combinatorial arrangements of nine lines with more than ten triple points, as these do not arise in our reduction arguments below. So we conclude this discussion here.  However, one may obviously ask for a complete list of combinatorial arrangements for nine lines.

\begin{question}
\label{q:9lines}
What other combinatorial (but not geometric)
line arrangements of ten lines and no points of multiplicity higher than three arise with some other number of triples  $n_3\notin[9,10]$?
\end{question}

\section{Main Results}
\label{sec:Main}

The main work of this paper gives a classification of non-trivially constructed geometric arrangements of ten lines with only triple points.  We ignore those arrangements that are constructed trivially by adding lines which do not contain at least three triple points.

\begin{theorem}[Classification]
\label{thm:classification}
The number of arrangements of ten lines with only triple points are as given in Table \ref{tab:summaryALT} and as follows:  there are \textbf{seventy-one} combinatorial arrangements and sixty-two geometric, with fifty-four of these having either irreducible or complex conjugate moduli spaces.  In particular, this classification gives exactly {nine} non-geometric arrangements and exactly \textbf{nine} potential Zariski pairs.
\end{theorem}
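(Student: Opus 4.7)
The plan is to enumerate combinatorial arrangements by the number of triple points $n_3$ and then apply Algorithm \ref{algorithm} to each candidate to test geometric realizability, irreducibility of the moduli space, and Zariski pair status. By Lemma \ref{lemma:basic} we have only four cases, $n_3\in\{10,11,12,13\}$, with the number of four-triple lines $\ell_4$ taking the values $0,3,6,9$ respectively. I would tackle these in order of increasing combinatorial complexity, exactly as Sections \ref{sec:10}, \ref{sec:13}, \ref{sec:12}, \ref{sec:11} suggest: start with $n_3=10$ (all lines meeting three triples each) to set up patterns, then $n_3=13$ (most constrained, with a single line carrying only three triples), then $n_3=12$, and finally $n_3=11$, which is the most fragmented.

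For the combinatorial enumeration within each case I would use a reduction argument. Removing a line from a ten-line arrangement satisfying Assumption \ref{ass:reduce} produces a nine-line subarrangement with a small and known number of triples; by Propositions \ref{prop:9-3} and \ref{prop:comb9lines10triples} such a subarrangement falls into a short list consisting of the three $(9_3)$ arrangements, the Pappus arrangement, the Nazir--Yoshinaga arrangement, and the non-geometric degeneration of $(9_3)$.iii. I would then enumerate each ten-line arrangement as an admissible extension of one of these base cases, using the combinatorial constraints from Section \ref{sec:Background} (Facts \ref{fact:2trip5lines}, \ref{fact:3col6lines}, \ref{fact:4col8lines} and Lemmas \ref{lem:0col6lines}, \ref{lem:6lines3triples}, \ref{lem:6lines4triples}) to prune impossible configurations, together with the global equalities of Lemma \ref{lemma:basic} and the Hirzebruch inequality (Theorem \ref{lemma:Hirzebruch}). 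This should produce the seventy-one combinatorial arrangements catalogued in Table \ref{tab:summaryALT}.

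For each surviving combinatorial arrangement I would then run Algorithm \ref{algorithm}: use the Grid Lemma \ref{grid} to place six suitable lines in a standard three-by-three grid position depending on two parameters $a$ and $b$; compute equations for the remaining four lines from the triple-point incidences, as illustrated in Examples \ref{example:12.B.2.iv}, \ref{11.B.3.a.-irr} and \ref{example:cpx-cnjgt}; record the resulting polynomial equations defining $\mathcal{M}_C$; and determine its dimension and irreducibility, either by hand factoring (using the discriminant trick of Example \ref{11.B.3.a.-irr}) or by the Mathematica irreducibility test. An empty moduli space marks a non-geometric arrangement; a zero-dimensional moduli space consisting of distinct, pairwise non-conjugate real points produces a potential Zariski pair; and an irreducible or complex-conjugate moduli space certifies that any two arrangements in it have diffeomorphic complements by Proposition \ref{prop:blue}, and hence do not form a Zariski pair. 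Collecting totals across all four values of $n_3$ from Tables \ref{tab:summaryALT}, \ref{tab:finallist} and \ref{tab:nongeom} then yields the claimed counts of $71$, $62$, $54$, $9$, and $9$.

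The main obstacle will be the case $n_3=11$. Here only three lines carry four triples while the remaining seven carry exactly three, so the arrangement lacks the strong Ceva-type rigidity (Lemma \ref{lem:6lines4triples}) enjoyed by the $n_3=12,13$ cases, and the reduction to a nine-line subarrangement can land in several distinct $(9_3)$ types. Each such landing spawns a subtree of extensions whose moduli equations are plane curves of moderate degree, and ruling out their reducibility requires careful hand arguments of the type in Example \ref{11.B.3.a.-irr} rather than a single uniform test. Managing this bookkeeping, and making sure that in the complex-conjugate zero-dimensional cases no accidental coincidence of double points creates a spurious triple (as guarded against in Example \ref{example:cpx-cnjgt} and Remark \ref{rem:checkdegen}), is the delicate part of the proof and the reason why the $n_3=11$ analysis is deferred to the final Section \ref{sec:11}.
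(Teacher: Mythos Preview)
Your overall plan---split by $n_3$, enumerate combinatorially, then run Algorithm \ref{algorithm}---matches the paper's architecture, and your treatment of $n_3=13$ is essentially what the paper does. But the reduction-to-nine-lines argument you propose does not extend uniformly to the other cases, and this is a genuine gap.

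The obstruction is arithmetic. Deleting a line from $\mathcal{A}$ destroys exactly as many triples as that line carries. For $n_3=13$ you delete the unique $3$-triple line and are left with $13-3=10$ triples on nine lines, landing in Proposition \ref{prop:comb9lines10triples}. For $n_3=12$ you can delete a $3$-triple line $L_{10}$ and land in the $(9_3)$ list of Proposition \ref{prop:9-3} \emph{only when} $L_{10}$ meets the other three $3$-triple lines $L_1,L_2,L_3$ in doubles; otherwise some $L_i$ drops to two triples and the deleted arrangement falls outside both propositions. The paper isolates exactly these non-reducible configurations (its classes 12.B.2 and 12.B.3) and handles them by direct casework on the subarrangement of $L_1,L_2,L_3,L_{10}$, not by deletion. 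For $n_3=11$ the reduction fails outright: deleting a $3$-triple line leaves eight triples, deleting a $4$-triple line leaves seven, and neither count is covered by Propositions \ref{prop:9-3} or \ref{prop:comb9lines10triples} (indeed a nine-line arrangement satisfying Assumption \ref{ass:reduce} needs at least nine triples). So your assertion that for $n_3=11$ ``the reduction to a nine-line subarrangement can land in several distinct $(9_3)$ types'' cannot be right.

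What the paper actually does for $n_3=11$ is to focus on the three distinguished $4$-triple lines $L_1,L_2,L_3$ and branch on whether they are central (case 11.A) or generic (case 11.B), then on how many of their three pairwise intersections are triples of $\mathcal{A}$, and then on the structure of a second subarrangement $L_4,L_5,L_6$. Lemmas \ref{lem:6lines3triples} and \ref{lem:6lines4triples} enter here not as pruning tools for an extension tree but as rigidity statements pinning down these six-line subarrangements. For $n_3=10$ the paper does no enumeration of its own: it simply quotes Gr\"unbaum's classical $(10_3)$ classification (Theorem \ref{thm:10triples}) and only carries out the moduli-space check.
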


\begin{table}[htbp]
\begin{tabular}{|c||c||c|c||c|c||c|c|c|}
\hline
Case & $\#$ & $\#$ non- & $\#$ & $\#$  & $\#$  & Result & Config & Table \\
by $\#$ &	comb    & geom: &	geom    & ($\mathcal{Z}$):  & irred	&  & Table & of\\ 
triples &	    & Tab \ref{tab:nongeom} &	    & Tab \ref{tab:finallist} & or ($\mathbb{C}$)	&  &  & Eqns\\ 
\hline
\hline
$10$.  & $10$  & $1$  & $9$  &   & $9$	 & Thm \ref{thm:10triples}-\ref{thm:10triplesMOD} & \ref{tab:10triples} & \ref{tab:10.EQNS4}-\ref{tab:10.EQNS4two}\\
\hline
\hline
$13$.  & $2$  & $2$  &   &   & 	 & Thm \ref{thm:13} & \ref{tab:13triples} & \ref{tab:13.EQNS} \\
\hline
\hline
($9_3$).i. & 5 &  & 5 &  & 5 & Lem \ref{lem:12Reduction-a} &  & \ref{tab:93.i.EQNS}\\
($9_3$).ii. & 4 &  & 4 & 2 & 2 & Lem \ref{lem:12Reduction-b} &  & \ref{tab:93.ii.EQNS}\\
($9_3$).iii. &  5 & 2 & 3 & 2 & 1 & Lem \ref{lem:12Reduction-c} & &  \ref{tab:93.iii.EQNS}\\
12.B.3. & 4 & 1 & 3 & 3 &  & Lem \ref{lem:12non3} & \ref{tab:12non3} & \ref{tab:12.B.3.EQNS1}\\
12.B.2. & 4 & 2 & 2 & 1 & 1 & Lem \ref{lem:12non2} & \ref{tab:12non2} & \ref{tab:12.B.2.EQNS1}\\
\hline
12. total & 22 & 5 & 17 & 8 & 9 & Thm \ref{thm:12} & & \\
\hline
\hline
11.A.       & 10 &  & 10 &  & 10 & Lem \ref{lem:11central} & \ref{tab:11central} & \ref{tab:11.A.EQNS1}-\ref{tab:11.A.EQNS2}\\
11.B.2.     & 4 &  & 4 &  & 4 & Lem \ref{lem:11generic2} & \ref{tab:11generic2} & \ref{tab:11.B.2.EQNS1} \\
11.B.3.a.   & 3 &  & 3 &  & 3 &  & \ref{tab:11generic3central} & \ref{tab:11.B.3.a.EQNS1}\\
11.B.3.b.2. & 7 &  & 7 & 1 & 6 & Lem \ref{lem:11generic3generic2} & \ref{tab:11generic3generic2} & \ref{tab:11.B.3.b.2.EQNS1}\\
11.B.3.b.1. & 13 & 1 & 12 &  & 12 & Lem \ref{lem:11generic3generic1} & \ref{tab:11generic3generic1a}-\ref{tab:11generic3generic1b} & \ref{tab:11.B.3.b.1.EQNS1}-\ref{tab:11.B.3.b.1.EQNS2}\\
\hline
11. total & 37 & 1 & 36 & 1 & 35 & Thm \ref{thm:11} & & \\
\hline
\hline
Total & 71 & 9 & 62 & 9  & 53 & Thm \ref{thm:classification}+\ref{thm:Zariskipairs} & & \\
\hline
\end{tabular}
\caption{A summary of results with hyperlinks.}
\label{tab:summaryALT}
\end{table}


%
%
Here the arrangements considered in the last column either have irreducible moduli spaces or have moduli spaces of two components which are complex conjugate.


We are primarily interested in geometric line arrangements here, and so the only cases considered are arrangements of ten lines with some number of triples in order for a geometric arrangement to occur:  $n_3\in[10,13]$ following Lemma \ref{lemma:basic} and Table \ref{tab:triplenumbers}.  Since some combinatorial but not geometric arrangements arose in these situations, we have included them.  However, this is not the complete list.

\begin{question}
\label{q:nongeometric}
What combinatorial but not geometric line arrangements of ten lines and no points of multiplicity higher than three arise with some other number of triples  $n_3\notin[10,13]$?
\end{question}

In particular, this classification gives rise to several arrangements of particular interest:  those whose combinatorics do not determine their topology.

\begin{theorem}[Main Theorem]
\label{thm:Zariskipairs}
This present classification of arrangements of ten complex lines with only triple points gives a list of {nine} potential Zariski pairs listed in Table \ref{tab:finallist}.
\end{theorem}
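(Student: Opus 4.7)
The plan is to extract Theorem \ref{thm:Zariskipairs} as a corollary of the classification Theorem \ref{thm:classification}, combined with the rigidity result Proposition \ref{prop:blue}. By Proposition \ref{prop:blue}, whenever $\mathcal{M}^{\mathbb{C}}_{\mathcal{A}}$ is irreducible (equivalently, whenever $\mathcal{M}_{\mathcal{A}}$ is either irreducible or consists of exactly two components interchanged by complex conjugation), no Zariski pair can be produced from the lattice type $L(\mathcal{A})$. Consequently, the potential Zariski pairs must come exactly from those combinatorial types $\mathcal{A}$ in the classification for which $\mathcal{M}^{\mathbb{C}}_{\mathcal{A}}$ is reducible, i.e., $\mathcal{M}_{\mathcal{A}}$ has at least two components not exchanged by complex conjugation.

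The first step is to run through every one of the seventy-one combinatorial types produced in Theorem \ref{thm:classification} and apply Algorithm \ref{algorithm}. Steps (A1)--(A2) use Lemma \ref{grid} to normalize six lines into the standard grid, with the remaining lines parametrized by the coordinates $(a,b)$ and any auxiliary parameters, yielding a polynomial system $S_{\mathcal{A}}(a,b,\dots)=0$ whose vanishing locus equals $\mathcal{T}_{\mathcal{A}}$. Step (A3) then analyzes this system: if $\dim \mathcal{M}_{\mathcal{A}}=0$ we factor the defining polynomial and test whether the roots are Galois-conjugate over $\mathbb{R}$, and if $\dim \mathcal{M}_{\mathcal{A}}>0$ we test irreducibility either by hand (as in Example \ref{11.B.3.a.-irr}) or via the \verb|IrreduciblePolynomialQ| routine. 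The non-realizable cases of (A4) are simply discarded, as they contribute neither to the count of realizable arrangements nor to any Zariski pair.

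The second step is bookkeeping: tabulate for each arrangement whether $\mathcal{M}^{\mathbb{C}}_{\mathcal{A}}$ is irreducible. According to the partial counts already compiled in Table \ref{tab:summaryALT}, the arrangements that fail this irreducibility criterion collect as follows: two from the $(9_3)$.ii.\ family, two from the $(9_3)$.iii.\ family, three from the 12.B.3.\ family, one from 12.B.2., and one from 11.B.3.b.2., summing to nine. For each such case one exhibits two representative arrangements in distinct non-conjugate components of $\mathcal{M}_{\mathcal{A}}$; these are the representatives that populate Table \ref{tab:finallist}. All remaining realizable arrangements in Theorem \ref{thm:classification} must be verified to have $\mathcal{M}^{\mathbb{C}}_{\mathcal{A}}$ irreducible, ensuring that the list of nine is complete and not merely a lower bound.

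The main obstacle is not conceptual but computational: for each combinatorial type one must set up the correct normalization (the choice of the grid's two pencils affects how clean the resulting equations are), reduce the system, and then decide irreducibility and reality of the components. The irreducibility test, especially for positive-dimensional moduli spaces with several auxiliary parameters, is the genuinely delicate point, and the reality/conjugacy check requires care in the zero-dimensional case to ensure that apparent Galois conjugates are actually complex conjugates (not just conjugates over some proper subfield of $\mathbb{C}$). Once these checks are carried out uniformly in Sections \ref{sec:10}--\ref{sec:11}, Theorem \ref{thm:Zariskipairs} follows immediately, with Table \ref{tab:finallist} serving as the explicit record of the nine surviving candidates.
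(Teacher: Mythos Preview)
Your proposal is correct and follows essentially the same approach as the paper: Theorem~\ref{thm:Zariskipairs} is obtained as a direct byproduct of the classification in Theorem~\ref{thm:classification}, by running Algorithm~\ref{algorithm} on each of the seventy-one combinatorial types (carried out case-by-case in Sections~\ref{sec:10}--\ref{sec:11}) and invoking Proposition~\ref{prop:blue} to discard those with irreducible or complex-conjugate moduli, leaving exactly the nine types recorded in Table~\ref{tab:finallist}. Your breakdown of the nine survivors by subfamily matches the paper's Table~\ref{tab:summaryALT} precisely.
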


\begin{table}[htbp]
{ \begin{tabular}{cc}

\begin{tabular}{|ccc|ccc|ccc|c|}
\hline
$L_1$ &	$L_2$ &	$L_3$ & $L_4$	& $L_5$ &	$L_6$ &	$L_7$ & $L_8$ &	$L_9$ & $L_{10}$\\
\hline
$e_1$ & $e_1$ & $e_1$ & $e_8$	&	$e_8$ & $e_8$ & $e_4$ & $e_3$	&	$e_2$ & $D$\\
$e_2$ & $e_4$ & $e_6$ & $e_4$	&	$e_2$ & $e_3$ & $e_7$ & $e_5$	&	$e_5$ & $F$  \\
$e_3$ & $e_5$ & $e_7$ & $e_6$ &	$e_7$ & $e_9$ & $e_9$ & $e_6$	&	$e_9$ & $I$ \\
$I$ & $D$ & $F$ & $I$      &	$D$ &  $F$     &  &      	&	 &          \\
\hline
\end{tabular}

	&

\begin{tabular}{|ccc|ccc|ccc|c|}
\hline
$L_1$ &	$L_2$ &	$L_3$ & $L_4$	& $L_5$ &	$L_6$ &	$L_7$ & $L_8$ &	$L_9$ & $L_{10}$\\
\hline
$e_1$ & $e_1$ & $e_1$ & $e_8$	&	$e_8$ & $e_8$ & $e_4$ & $e_3$	&	$e_2$ & $C$ \\
$e_2$ & $e_4$ & $e_6$ & $e_4$	&	$e_2$ & $e_3$ & $e_7$ & $e_5$	&	$e_5$ & $F$  \\
$e_3$ & $e_5$ & $e_7$ & $e_6$ &	$e_7$ & $e_9$ & $e_9$ & $e_6$	&	$e_9$ & $I$ \\
$I$ &       & $F$ &  $I$     &	$C$ & $F$ &  & $C$	&	      &          \\
\hline
\end{tabular}

	\\

 &  \\

The arrangement ($9_3$).ii.DFI. & The arrangement ($9_3$).ii.CFI.\\

 &  \\

\begin{tabular}{|ccc|ccc|ccc|c|}
\hline
$e_1$ & $e_1$ & $e_1$ & $e_8$	&	$e_8$ & $e_8$ & $e_9$ & $e_9$	&	$e_9$ & $B$ \\
$e_2$ & $e_4$ & $e_6$ & $e_2$	&	$e_5$ & $e_3$ & $e_2$ & $e_4$	&	$e_3$ & $D$ \\
$e_3$ & $e_5$ & $e_7$ & $e_4$ &	$e_6$ & $e_7$ & $e_5$ & $e_7$	&	$e_6$ & $F$ \\
  & $F$  & $B$  & $B$  &	  & $D$  & $D$  &  	&	$F$  &   \\
\hline
\end{tabular}

& 

\begin{tabular}{|ccc|ccc|ccc|c|}
\hline
$e_1$ & $e_1$ & $e_1$ & $e_8$	&	$e_8$ & $e_8$ & $e_9$ & $e_9$	&	$e_9$ & $A$ \\
$e_2$ & $e_4$ & $e_6$ & $e_2$	&	$e_5$ & $e_3$ & $e_2$ & $e_4$	&	$e_3$ & $C$ \\
$e_3$ & $e_5$ & $e_7$ & $e_4$ &	$e_6$ & $e_7$ & $e_5$ & $e_7$	&	$e_6$ & $G$ \\
$G$  &   & $C$  & $A$  &	$G$  &   & $C$  &  	&	$A$  &   \\
\hline
\end{tabular}

\\

 &  \\

The arrangement ($9_3$).iii.BDF. & The arrangement ($9_3$).iii.ACG. \\

 &  \\

\begin{tabular}{|cc|cc|c|c|cc|cc|}
\hline
$L_1$ &	$L_2$ &	$L_3$ & $L_{10}$	& $L_4$ &	$L_5$ &	$L_6$ & $L_7$ &	$L_8$ & $L_9$\\
\hline
$e_1$ & $e_1$ & $e_6$ & $e_6$	&	    $e_{11}$ & $e_{2}$ & $e_{11}$ & $e_{11}$	&	$e_{12}$ & $e_{12}$\\
$e_2$ & $e_4$ & $e_7$ & $e_9$	&	    $e_{12}$ & $e_{4}$ & $e_{3}$  & $e_{8}$		&	$e_{3}$  & $e_{5}$\\
$e_3$ & $e_5$ & $e_8$ & $e_{10}$ &	$e_{1}$ & $e_7$    & $e_5$    & $e_{10}$	&	$e_{10}$ & $e_{8}$\\
		  & 		  & 		  & 			&	    $e_{7}$ & $e_9$ 	 & $e_6$ 	  & $e_{2}$		&	$e_{4}$  & $e_{9}$\\
\hline
\end{tabular}

 & 

\begin{tabular}{|c|ccc|ccc|ccc|}
\hline
$L_1$ &	$L_2$ &	$L_3$ & $L_{10}$	& $L_4$ & $L_5$ &	$L_6$ &	$L_7$ & $L_8$ &	$L_9$\\
\hline
$e_1$ & $e_1$ & $e_2$ & $e_3$	&	$e_1$ 	 & $e_2$ 		& $e_3$ 	 & $e_{10}$	&	$e_{11}$ & $e_{12}$\\
$e_2$ & $e_4$ & $e_6$ & $e_8$	&	$e_{10}$ & $e_{10}$ & $e_{11}$ & $e_{4}$	&	$e_{5}$ & $e_{4}$\\
$e_3$ & $e_5$ & $e_7$ & $e_9$	&	$e_{11}$ & $e_{12}$ & $e_{12}$ & $e_{7}$	&	$e_{6}$ & $e_{6}$\\
		  & 		  & 		  & 			&	$e_9$ 	 & $e_5$ 		& $e_7$ 	 & $e_{8}$	&	$e_{8}$ & $e_{9}$\\
\hline
\end{tabular}

\\

 &  \\

The arrangement 12.B.2.iv. & The arrangement 12.B.3.a.i.\\

 & \\

\begin{tabular}{|cc|cc|ccc|ccc|}
\hline
$e_1$ & $e_1$ & $e_2$ & $e_3$	&	$e_1$ 	 & $e_2$ 		& $e_3$ 	 & $e_{10}$	&	$e_{11}$ & $e_{12}$\\
$e_2$ & $e_3$ & $e_6$ & $e_8$	&	$e_{10}$ & $e_{10}$ & $e_{11}$ & $e_{4}$	&	$e_{7}$ & $e_{7}$\\
$e_4$ & $e_5$ & $e_7$ & $e_9$	&	$e_{11}$ & $e_{12}$ & $e_{12}$ & $e_{5}$	&	$e_{9}$ & $e_{8}$\\
		  & 		  & 		  & 			&	$e_8$ 	 & $e_9$ 		& $e_6$ 	 & $e_{6}$	&	$e_{4}$ & $e_{5}$\\
\hline
\end{tabular}

&

\begin{tabular}{|cc|cc|ccc|ccc|}
\hline
$e_1$ & $e_1$ & $e_2$ & $e_3$	&	$e_1$ 	 & $e_2$ 		& $e_3$ 	 & $e_{10}$	&	$e_{11}$ & $e_{12}$\\
$e_2$ & $e_3$ & $e_6$ & $e_8$	&	$e_{10}$ & $e_{10}$ & $e_{11}$ & $e_{4}$	&	$e_{7}$ & $e_{7}$\\
$e_4$ & $e_5$ & $e_7$ & $e_9$	&	$e_{11}$ & $e_{12}$ & $e_{12}$ & $e_{5}$	&	$e_{9}$ & $e_{8}$\\
		  & 		  & 		  & 			&	$e_8$ 	 & $e_9$ 		& $e_6$ 	 & $e_{6}$	&	$e_{5}$ & $e_{4}$\\
\hline
\end{tabular}

\\

 &  \\

The arrangement 12.B.3.b.ii. & The arrangement 12.B.3.b.iii.\\

 & \\

\begin{tabular}{|ccc|ccc|cccc|}
\hline
$L_1$ &	$L_2$ &	$L_3$ & $L_4$ & $L_5$ &	$L_6$ &	$L_7$ & $L_8$ &	$L_9$ & $L_{10}$\\
\hline
$e_1$ & $e_1$ & $e_2$ & $e_1$			&	$e_2$ 	 & $e_3$ 		& $e_4$  	 & $e_5$		&	$e_5$ 	 & $e_7$\\
$e_2$ & $e_3$ & $e_3$ & $e_{10}$	&	$e_6$ 	 & $e_4$ 		& $e_7$ 	 & $e_8$		&	$e_6$ 	 & $e_9$\\
$e_4$ & $e_6$ & $e_8$ & $e_{11}$	&	$e_{10}$ & $e_{11}$ & $e_8$ 	 & $e_{10}$	&	$e_9$ 	 & $e_{11}$\\
$e_5$ & $e_7$ & $e_9$ & 	&	 & 	&  & 	&	 & \\
\hline
\end{tabular}

 & \\

 &  \\

The arrangement 11.B.3.b.2.v. &  \\

 & 

\end{tabular}
}
\caption{The {nine} potential Zariski pairs that arise from this present classification.}
\label{tab:finallist}
\end{table}

\begin{remark}
\label{rem:12triples}
It is interesting to note that all but one of the {nine} potential Zariski pairs listed in Table \ref{tab:finallist} from Theorem \ref{thm:Zariskipairs} have exactly twelve triple points, with this last one having exactly eleven triples.
\end{remark}

This finishes the classification of geometric arrangements of ten lines begun in previous work by three of the authors.

\begin{corollary}
\label{cor:Zariskipairs}
Together with the classification of ten lines with quadruple points given in \cite{Fei:10}, which provides a list of nine potential Zariski pairs, this gives a total of {eighteen} such pairs for all complex line arrangements of ten lines satisfying { Assumption \ref{ass:reduce}}.
\end{corollary}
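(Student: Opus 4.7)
The plan is essentially a partition and a sum: the class of complex ten-line arrangements satisfying Assumption \ref{ass:reduce} splits into two mutually exclusive subclasses according to whether there exists an intersection point of multiplicity greater than three. The first subclass consists of arrangements with some $n_i \neq 0$ for $i \geq 4$, and the second consists of arrangements with $n_i = 0$ for all $i \geq 4$, i.e.\ with only double and triple points. These two subclasses are manifestly disjoint and together cover every arrangement under consideration.

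First I would invoke the previous classification of arrangements of ten lines with at least one multiple point of order greater than three carried out in \cite{Fei:10} by one of the authors, which gives exactly nine potential Zariski pairs in the sense defined in Section \ref{subsec:geom}. Then I would invoke Theorem \ref{thm:Zariskipairs} (together with its summary in Table \ref{tab:finallist}), which handles the complementary subclass where all intersection points are double or triple, and produces exactly nine further potential Zariski pairs.

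The final step is to check that the two lists are disjoint — but this is automatic because the defining combinatorial invariant distinguishing the two subclasses (the presence or absence of an intersection of multiplicity $\geq 4$) is a lattice invariant, so no arrangement in the first list is even lattice-isomorphic to one in the second. Summing the two disjoint contributions gives $9+9=18$ potential Zariski pairs in total, which is the claim. There is no real obstacle here; the corollary is genuinely a bookkeeping statement, and the only thing to verify beyond citing the two theorems is the mutual exclusivity of the subclasses, which follows immediately from the combinatorial definition of the intersection lattice.
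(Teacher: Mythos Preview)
Your proposal is correct and matches the paper's treatment exactly: the paper states this corollary without any explicit proof, treating it as an immediate consequence of combining Theorem~\ref{thm:Zariskipairs} with the cited result of \cite{Fei:10}. Your partition-and-sum argument, together with the observation that the presence of a point of multiplicity $\geq 4$ is a lattice invariant (hence the two lists are disjoint), is precisely the implicit reasoning.
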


This work has produced a list of further counterexamples:  arrangements which are combinatorial but not geometric.

\begin{theorem}
\label{thm:nongeom}
This present classification of arrangements of ten complex lines with only triple points gives a list of {nine} non-geometric arrangements listed in Table \ref{tab:nongeom}.
\end{theorem}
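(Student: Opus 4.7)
The plan is to extract this theorem directly as a corollary of the full classification work underlying Theorem \ref{thm:classification} and the case-by-case analyses summarized in Table \ref{tab:summaryALT}. Since every combinatorial arrangement satisfying Assumption \ref{ass:reduce} with $n_3 \in [10,13]$ appears as a row in some configuration table in Sections \ref{sec:10}, \ref{sec:13}, \ref{sec:12}, \ref{sec:11}, the proof reduces to scanning the outputs of Algorithm \ref{algorithm} applied to each of the $71$ combinatorial arrangements and flagging exactly those whose moduli space $\mathcal{M}_C$ is empty.

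First I would recall the detection mechanism for non-realizability from step (A4)(a) of Algorithm \ref{algorithm}: once the Grid Lemma \ref{grid} has normalized six lines and the remaining lines are written in terms of parameters $a,b$ (or more, depending on the case), each unused triple forces a polynomial equation on the parameters. If the resulting system is inconsistent --- e.g., reduces to $1=0$, or forces $a=0,1$ or $a=b$, which would collapse two lines and contradict the distinctness of the normalized six --- then no geometric realization exists. This is the sole criterion used to populate the ``non-geom'' columns of Table \ref{tab:summaryALT}.

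Next I would march through the six blocks of the classification in the order of Table \ref{tab:summaryALT}, collecting the non-realizable cases. From the $n_3=10$ block comes one non-geometric arrangement (Theorem \ref{thm:10triples}). The $n_3=13$ block contributes two (Theorem \ref{thm:13}). The $n_3=12$ block contributes five: two arise from degenerations of $(9_3).\mathrm{iii}$ (Lemma \ref{lem:12Reduction-c}), matching the non-geometric degeneration appearing already in Proposition \ref{prop:comb9lines10triples}; one arises in case 12.B.3 (Lemma \ref{lem:12non3}); and two arise in case 12.B.2 (Lemma \ref{lem:12non2}). The $n_3=11$ block contributes one from case 11.B.3.b.1 (Lemma \ref{lem:11generic3generic1}). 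The total is $1+2+2+1+2+1 = 9$, which matches the enumeration in Table \ref{tab:nongeom}.

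The principal obstacle is not conceptual but bookkeeping: one must ensure that in each case flagged as non-geometric, the inconsistency of the polynomial system really certifies emptiness of $\mathcal{M}_C$, and not merely emptiness of $\mathcal{M}_\mathcal{A}$ due to coincidences forcing double points into triples. By Proposition \ref{prop:correspondence}, $\mathcal{M}_\mathcal{A} \subseteq \mathcal{M}_C$, so one direction is automatic; but for the classification claim one needs that the combinatorial arrangement itself has no realization, even in a weak sense. Thus for each of the nine candidates I would verify that the equations derived from the Grid Lemma setup are genuinely inconsistent over $\mathbb{C}$, not just that the generic parameter forbids extra incidences. Once each of the nine cases is cross-checked against its configuration table and equation table cited in Table \ref{tab:summaryALT}, Table \ref{tab:nongeom} is assembled by listing precisely these nine arrangements, completing the proof.
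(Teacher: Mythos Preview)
Your proposal is correct and follows essentially the same route as the paper: the theorem has no stand-alone proof there either, but is assembled as a by-product of the case analyses in Sections \ref{sec:10}--\ref{sec:11}, with the nine non-realizable cases read off from the equation tables exactly as you describe and recorded in Table \ref{tab:summaryALT}. One small inaccuracy: your remark that the two non-geometric cases from Lemma \ref{lem:12Reduction-c} are ``matching the non-geometric degeneration appearing already in Proposition \ref{prop:comb9lines10triples}'' is misleading---the latter is a nine-line arrangement, while the former (ADG and BEG) are ten-line arrangements obtained by adjoining a tenth line to $(9_3)$.iii.; they are distinct objects, not the same phenomenon resurfacing.
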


\begin{table}[htbp]
{ \begin{tabular}{cc}

\begin{tabular}{|ccc|cc|ccccc|}
\hline
$L_1$ &	$L_2$ &	$L_3$ & $L_4$ &	$L_5$ &	$L_6$ & $L_7$ &	$L_8$ & $L_9$ & $L_{10}$\\
\hline
$e_1$ & $e_1$ & $e_1$ & $e_4$   &   $e_5$ &        $e_8$ & $e_2$ & $e_3$    &   $e_2$ & $e_{3}$\\
$e_2$ & $e_4$ & $e_6$ & $e_6$   &   $e_7$ &        $e_9$ & $e_4$ & $e_6$    &   $e_5$ & $e_{7}$\\
$e_3$ & $e_5$ & $e_7$ & $e_{10}$    &   $e_{10}$ & $e_{10}$ & $e_8$ & $e_8$ &   $e_{9}$ & $e_{9}$\\
\hline
\end{tabular}

 &

\begin{tabular}{|ccc|ccc|c|ccc|}
\hline
$L_1$ &	$L_2$ &	$L_3$ & $L_4$ & $L_5$ &	$L_6$ &	$L_7$ & $L_8$ &	$L_9$ & $L_{10}$\\
\hline
$e_1$ & $e_1$ & $e_2$ & $e_1$			&	$e_2$ 	 & $e_3$ 		& $e_4$  	 & $e_5$		&	$e_5$ 	 & $e_7$\\
$e_2$ & $e_3$ & $e_3$ & $e_8$			&	$e_6$ 	 & $e_4$ 		& $e_9$ 	 & $e_7$		&	$e_6$ 	 & $e_{10}$\\
$e_4$ & $e_6$ & $e_8$ & $e_{11}$	&	$e_{10}$ & $e_{10}$ & $e_{11}$ & $e_{9}$	&	$e_{8}$  & $e_{11}$\\
$e_5$ & $e_7$ & $e_9$ & 	&	 & 	&  & 	&	 & \\
\hline
\end{tabular}

\\

 &  \\

The non-geometric arrangement $(10_3)$.iv. & The non-geometric arrangement 11.B.3.b.1.vii. \\

 &  \\

\begin{tabular}{|ccc|cc|cccc|c|}
\hline
$L_1$ &	$L_2$ &	$L_3$ & $L_4$	& $L_5$ &	$L_6$ &	$L_7$ & $L_8$ &	$L_9$ & $L_{10}$\\
\hline
$e_1$ & $e_1$ & $e_1$ & $e_2$	&	$e_3$ & $e_4$ & $e_2$ & $e_3$	&	$e_4$ & $A$\\
$e_2$ & $e_5$ & $e_8$ & $e_5$	&	$e_6$ & $e_7$ & $e_7$ & $e_5$	&	$e_6$ & $C$\\
$e_3$ & $e_6$ & $e_9$ & $e_8$	&	$e_{10}$ & $e_{10}$ & $e_9$ & $e_9$	&	$e_8$ & $E$\\
$e_4$ & $e_7$ & $C$   & $e_{10}$ &	$C$ & $A$ & $E$   & $A$	  &	$E$   & \\
\hline
\end{tabular}

&

\begin{tabular}{|ccc|cc|cc|	cc|c|}
\hline
$L_1$ &	$L_2$ &	$L_3$ & $L_4$	& $L_5$ &	$L_6$ &	$L_7$ & $L_8$ &	$L_9$ & $L_{10}$\\
\hline
$e_1$ & $e_1$ & $e_1$ & $e_2$	&	$e_2$ & $e_3$ & $e_3$ & $e_4$	&	$e_4$ & $e_8$\\
$e_2$ & $e_5$ & $e_{11}$ & $e_7$	&	$e_6$ & $e_7$ & $e_5$ & $e_6$	&	$e_5$ & $e_9$\\
$e_3$ & $e_6$ & $e_{12}$ & $e_8$	&	$e_{10}$ & $e_{10}$ & $e_9$ & $e_9$	&	$e_8$ & $e_{10}$\\
$e_4$ & $e_7$ & $e_{13}$ & $e_{13}$	&	$e_{11}$ & $e_{12}$ & $e_{13}$ & $e_{12}$	&	$e_{11}$ & \\
\hline
\end{tabular}

\\

 &  \\

The non-geometric arrangement 13.i. & The non-geometric arrangement 13.ii. \\

 &  \\

\begin{tabular}{|ccc|ccc|ccc|c|}
\hline
$e_1$ & $e_1$ & $e_1$ & $e_8$	&	$e_8$ & $e_8$ & $e_9$ & $e_9$	&	$e_9$ & $B$ \\
$e_2$ & $e_4$ & $e_6$ & $e_2$	&	$e_5$ & $e_3$ & $e_2$ & $e_4$	&	$e_3$ & $E$ \\
$e_3$ & $e_5$ & $e_7$ & $e_4$ &	$e_6$ & $e_7$ & $e_5$ & $e_7$	&	$e_6$ & $G$ \\
$G$  & {$E$}  & $B$  & $B$  & $G$ & {$E$} &   &  	&	  &   \\
\hline
\end{tabular}

& 

\begin{tabular}{|ccc|ccc|ccc|c|}
\hline
$e_1$ & $e_1$ & $e_1$ & $e_8$	&	$e_8$ & $e_8$ & $e_9$ & $e_9$	&	$e_9$ & $A$ \\
$e_2$ & $e_4$ & $e_6$ & $e_2$	&	$e_5$ & $e_3$ & $e_2$ & $e_4$	&	$e_3$ & $D$ \\
$e_3$ & $e_5$ & $e_7$ & $e_4$ &	$e_6$ & $e_7$ & $e_5$ & $e_7$	&	$e_6$ & $G$ \\
$G$  &   &   & $A$  &	$G$  & $D$  & $D$  &  	&	$A$  &   \\
\hline
\end{tabular}

\\

 &  \\

The non-geometric arrangement ($9_3$).iii.B{E}G. & The non-geometric arrangement ($9_3$).iii.ADG. \\

 &  \\

\begin{tabular}{|cc|cc|ccc|ccc|}
\hline
$L_1$ &	$L_2$ &	$L_3$ & $L_{10}$	& $L_4$ &	$L_5$ &	$L_6$ & $L_7$ &	$L_8$ & $L_9$\\
\hline
$e_1$ & $e_1$ & $e_6$ & $e_6$	&	    $e_{11}$ & $e_{11}$ & $e_{11}$ & $e_{12}$	&	$e_{12}$ & $e_{12}$\\
$e_2$ & $e_4$ & $e_7$ & $e_9$	&	    $e_1$  	 & $e_2$  	& $e_3$  	 & $e_{2}$	&	$e_{3}$ & $e_{4}$\\
$e_3$ & $e_5$ & $e_8$ & $e_{10}$ &	$e_7$ 	 & $e_4$    & $e_5$    & $e_{5}$	&	$e_{7}$ & $e_{10}$\\
		  & 		  & 		  & 			&	    $e_{10}$ & $e_9$ 	  & $e_8$ 	 & $e_{6}$	&	$e_{9}$ & $e_{8}$\\
\hline
\end{tabular}

&

\begin{tabular}{|cc|cc|c|c|cc|cc|}
\hline
$L_1$ &	$L_2$ &	$L_3$ & $L_{10}$	& $L_4$ &	$L_5$ &	$L_6$ & $L_7$ &	$L_8$ & $L_9$\\
\hline
$e_1$ & $e_1$ & $e_6$ & $e_6$	&	    $e_{11}$ & $e_{2}$ & $e_{11}$ & $e_{11}$	&	$e_{12}$ & $e_{12}$\\
$e_2$ & $e_4$ & $e_7$ & $e_9$	&	    $e_{12}$ & $e_{4}$ & $e_{3}$  & $e_{8}$		&	$e_{3}$  & $e_{5}$\\
$e_3$ & $e_5$ & $e_8$ & $e_{10}$ &	$e_{1}$ & $e_7$    & $e_5$    & $e_{10}$	&	$e_{10}$ & $e_{8}$\\
		  & 		  & 		  & 			&	    $e_{6}$ & $e_9$ 	 & $e_7$ 	  & $e_{2}$		&	$e_{4}$  & $e_{9}$\\
\hline
\end{tabular}

\\

 &  \\

The non-geometric arrangement 12.B.2.i.  & The non-geometric arrangement 12.B.2.iii. \\

 &  \\

\begin{tabular}{|cc|cc|ccc|ccc|}
\hline
$e_1$ & $e_1$ & $e_2$ & $e_3$	&	$e_1$ 	 & $e_2$ 		& $e_3$ 	 & $e_{10}$	&	$e_{11}$ & $e_{12}$\\
$e_2$ & $e_3$ & $e_6$ & $e_8$	&	$e_{10}$ & $e_{10}$ & $e_{11}$ & $e_{4}$	&	$e_{5}$ & $e_{4}$\\
$e_4$ & $e_5$ & $e_7$ & $e_9$	&	$e_{11}$ & $e_{12}$ & $e_{12}$ & $e_{7}$	&	$e_{6}$ & $e_{6}$\\
		  & 		  & 		  & 			&	$e_9$ 	 & $e_5$ 		& $e_7$ 	 & $e_{8}$	&	$e_{8}$ & $e_{9}$\\
\hline
\end{tabular}

 &  \\

 &  \\
The non-geometric arrangement 12.B.3.b.i. & \\
 &

\end{tabular}
}
\caption{The {nine} non-geometric arrangements that arise from this present classification.}
\label{tab:nongeom}
\end{table}

%
%
%
%

\subsection{A note about the proofs that follow}
\label{subsec:proofs}

For the sake of organization, the proofs of these results are split into Sections \ref{sec:10}, \ref{sec:13}, \ref{sec:12}, and \ref{sec:11} according to the cases $n_3=10,13,12,11$, respectively, as ordered by straightforwardness of the proofs.  Each section begins with the local classification result, followed by its proof.  The nature of these proofs are as follows.

The case for ten triples is already classified in \cite{Grun}.  We display this result in Section \ref{sec:10} and simply check these arrangements to determine their moduli spaces.

The case for thirteen triples comes next in Section \ref{sec:13} because it is the next easiest.  Here a single line is distinguished by the number of triples on it.  We delete this line leaving an arrangement of nine lines with ten triples, and so we can apply Proposition \ref{prop:comb9lines10triples} 
 to obtain only three arrangements.  We consider casework by adding back the tenth line passing through three of the nine double points.

The next easiest case is for twelve triples, as some of these arrangements can be considered by a reduction argument similar to that above taken care of in Subsection \ref{subsec:12red}.  The first subsection of Section \ref{sec:12} produces no arrangements, and all remaining arrangements are determined by casework in Subsection \ref{subsec:12nonred}.

The remaining case for eleven triples contains the most laborious casework; this is handled in Section \ref{sec:11}.

\begin{remark}
\label{rem:centralgeneric}
It may be useful here to explicitly describe the notation used for the cases, as in 11.B.3.b.1.iv., that appears throughout.

The first number stands for the number of triples in the arrangement.  The second letter refers to the first subarrangement considered: it is (A) when this subarrangement is central and (B) when it is generic.  The number that follows gives the number of doubles of the generic subarrangement that are taken as triples in the original arrangement.  The lowercase letters (a) and (b) refer to second subarrangement considered, similarly central and generic, respectively.  The number that follows is again the number of doubles of the second generic subarrangement that are taken as triples in the original arrangement.  The roman numeral that concludes it represents the specific case number within these confines.

Thus in Figure \ref{fig:lemmas}, the arrangement $\mathcal{B}$ may be thought of as having its first three outer lines being generic (B) and forming a triangle with its next three lines also being generic (b) and forming an inner triangle.

On the other hand the Ceva arrangement $\mathcal{C}$ in Figure \ref{fig:lemmas} may be thought of as having its first three outer lines being generic (B) and forming a triangle with its next three lines being central (a) and meeting at a single point.
\end{remark}

Furthermore, the notations ($*$), ($\mathbb{C}$), and ($\mathcal{Z}$) that appear in the tables below are as described in Algorithm \ref{algorithm}:  cases for which we use Mathematica's irreducibility test, for which only complex solutions exist, and cases which yield potential Zariski pairs.

\section{Arrangements of ten lines with ten triples}
\label{sec:10}

For completion we first present the results already known about the ($10_3$) arrangements:  those with ten lines and ten triples.

\begin{theorem}\cite[Theorem 2.2.1.]{Grun}
\label{thm:10triples}
For ten lines and ten triples, there are ten combinatorial configurations and nine geometric configurations, as shown in Table \ref{tab:10triples}.
\end{theorem}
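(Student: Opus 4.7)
The plan is to separate the combinatorial count from the geometric count and handle them with different tools. For the combinatorial part, first observe that under Assumption \ref{ass:reduce} the case $n_3=10$ forces $\ell_3=10$ and $\ell_4=0$ by Lemma \ref{lemma:basic} (equivalently, Equations \ref{eqn:basic1}--\ref{eqn:basic2} with $n_3=10$), so every line passes through exactly three triple points. This places us in the classical setting of $(10_3)$-configurations of Definition \ref{def:n3}, and Gr\"unbaum's enumeration in Theorem 2.2.1 of \cite{Grun} already lists exactly ten such combinatorial configurations. I would reproduce Gr\"unbaum's list as configuration tables in Table \ref{tab:10triples}, matching his labels $(10_3)_1$ through $(10_3)_{10}$ so that the reader can cross-reference.

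For the geometric part, I would run Algorithm \ref{algorithm} on each of the ten configuration tables in turn. In each case, pick two pencils of three concurrent lines (or two generic triangles of lines, depending on which subarrangement occurs) and normalize via the Grid Lemma \ref{grid} so that six of the lines are $x=0,\,z,\,az$ and $y=0,\,z,\,bz$ for parameters $a,b\in\mathbb{C}\setminus\{0,1\}$. The remaining four lines are then forced, line by line, by reading off the triple-point incidences from the configuration table: each such triple containing one of these remaining lines pins down its defining equation uniquely in terms of the already-chosen parameters. The triples not yet consumed in this process then translate, via Property \ref{property:det}, into a system of polynomial defining equations in $a$ and $b$. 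One then reads off $\dim\mathcal{M}$ and checks irreducibility to populate the moduli-space column.

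The main obstacle, and essentially the only nontrivial verification, is the single non-geometric case $(10_3)$.iv.\ recorded in Table \ref{tab:nongeom}. Here I expect the residual polynomial system either to force two of the parameters among $a,b$ (or subsequently introduced parameters) to coincide --- which would merge two of the ten lines and thus violate the assumption that all $P_i$ are distinct in Definition \ref{def:mod} --- or to reduce to an inconsistent numerical relation such as $1=0$. Verifying this inconsistency is a direct, if slightly delicate, elimination calculation on the coefficient determinants of the last few lines; I would present it explicitly so that the failure of realizability is visible from the algebra rather than merely asserted.

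For the remaining nine configurations, the polynomial system is consistent with positive-dimensional or finite but realizable solution locus, and in each case the moduli space turns out to be irreducible. I would record the explicit line equations and the defining polynomials in Tables \ref{tab:10.EQNS4}--\ref{tab:10.EQNS4two}, and a companion result (Theorem \ref{thm:10triplesMOD}) would state the irreducibility of each $\mathcal{M}_\mathcal{A}$ in this range, so that together with Proposition \ref{prop:blue} one concludes that no Zariski pair arises among arrangements of ten lines with exactly ten triples.
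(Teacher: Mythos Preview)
Your proposal is correct and matches the paper's treatment. Note that the paper does not give an independent proof of Theorem~\ref{thm:10triples} at all: both the combinatorial count (ten) and the geometric count (nine) are taken directly from Gr\"unbaum \cite[Theorem~2.2.1]{Grun}, and the paper's own contribution is only Theorem~\ref{thm:10triplesMOD}, whose proof is exactly the Algorithm~\ref{algorithm} computation you describe, with the results recorded in Tables~\ref{tab:10.EQNS4}--\ref{tab:10.EQNS4two}. In particular, for the non-geometric case $(10_3)$.iv.\ the paper finds the constraint $c-b=0$ (where $y=bz$ is one of the grid lines and $e_8=(a,c)$ is an auxiliary point), which forces a degeneration of the intersection lattice---precisely the kind of coincidence you anticipated.
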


\begin{table}[htbp]
{\begin{tabular}{cc}

\begin{tabular}{|ccc|ccccccc|}
\hline
$L_1$ & $L_2$ & $L_3$ & $L_4$   & $L_5$ &   $L_6$ & $L_7$ & $L_8$ & $L_9$ & $L_{10}$\\
\hline
$e_1$ & $e_1$ & $e_1$ & $e_2$   &   $e_3$ & $e_8$ & $e_2$ & $e_3$   &   $e_4$ & $e_{5}$\\
$e_6$ & $e_2$ & $e_4$ & $e_4$   &   $e_5$ & $e_9$ & $e_6$ & $e_7$   &   $e_6$ & $e_{7}$\\
$e_7$ & $e_3$ & $e_5$ & $e_8$   &   $e_8$ & $e_{10}$ & $e_{9}$ & $e_{9}$    &   $e_{10}$ & $e_{10}$\\
\hline
\end{tabular}
&
\begin{tabular}{|ccc|cc|ccccc|}
\hline
$L_1$ & $L_2$ & $L_3$ & $L_4$   & $L_5$ &   $L_6$ & $L_7$ & $L_8$ & $L_9$ & $L_{10}$\\
\hline
$e_1$ & $e_1$ & $e_1$ & $e_4$   &       $e_5$ &    $e_8$ & $e_2$ & $e_3$    &   $e_2$ & $e_{3}$\\
$e_2$ & $e_4$ & $e_6$ & $e_6$   &       $e_7$ &    $e_9$ & $e_4$ & $e_7$    &   $e_6$ & $e_{5}$\\
$e_3$ & $e_5$ & $e_7$ & $e_{10}$    &   $e_{10}$ & $e_{10}$ & $e_8$ & $e_{8}$   &   $e_9$ & $e_{9}$\\
\hline
\end{tabular}
\\
 &  \\
The arrangement $(10_3)$.i. & The arrangement $(10_3)$.ii. \\
 &  \\
\begin{tabular}{|ccc|cc|ccccc|}
\hline
$e_1$ & $e_1$ & $e_1$ & $e_4$   &   $e_5$ &        $e_8$ & $e_2$ & $e_3$    &   $e_2$ & $e_{3}$\\
$e_2$ & $e_4$ & $e_6$ & $e_6$   &   $e_7$ &        $e_9$ & $e_4$ & $e_6$    &   $e_7$ & $e_{5}$\\
$e_3$ & $e_5$ & $e_7$ & $e_{10}$    &   $e_{10}$ & $e_{10}$ & $e_8$ & $e_8$ &   $e_{9}$ & $e_{9}$\\
\hline
\end{tabular}
&
\begin{tabular}{|ccc|cc|ccccc|}
\hline
$e_1$ & $e_1$ & $e_1$ & $e_4$   &   $e_5$ &        $e_8$ & $e_2$ & $e_3$    &   $e_2$ & $e_{3}$\\
$e_2$ & $e_4$ & $e_6$ & $e_6$   &   $e_7$ &        $e_9$ & $e_4$ & $e_6$    &   $e_5$ & $e_{7}$\\
$e_3$ & $e_5$ & $e_7$ & $e_{10}$    &   $e_{10}$ & $e_{10}$ & $e_8$ & $e_8$ &   $e_{9}$ & $e_{9}$\\
\hline
\end{tabular}
\\
 &  \\
The arrangement $(10_3)$.iii. & The non-geometric arrangement $(10_3)$.iv. \\
 &  \\
\begin{tabular}{|ccc|ccccccc|}
\hline
$e_1$ & $e_1$ & $e_1$ & $e_2$   &   $e_3$ & $e_8$ & ${e_2}$ &    ${e_4}$    &   ${ e_3}$ & $e_{5}$\\
$e_2$ & $e_4$ & $e_6$ & $e_4$   &   $e_7$ & $e_9$ & $e_5$ &      $e_6$  &   $e_6$ & $e_{7}$\\
$e_3$ & $e_5$ & $e_7$ & $e_{8}$ &   $e_8$ & $e_{10}$ & $e_{9}$ & $e_9$  &   $e_{10}$ & $e_{10}$\\
\hline
\end{tabular}
 &
\begin{tabular}{|ccc|ccccccc|}
\hline
$e_1$ & $e_1$ & $e_1$ & $e_2$   &   $e_3$ & $e_8$ & ${e_2}$ &    ${e_5}$    &   ${e_3}$ & $e_{4}$\\
$e_2$ & $e_4$ & $e_6$ & $e_4$   &   $e_7$ & $e_9$ & $e_6$ &      $e_7$  &   $e_5$ & $e_{6}$\\
$e_3$ & $e_5$ & $e_7$ & $e_{8}$ &   $e_8$ & $e_{10}$ & $e_{9}$ & $e_9$  &   $e_{10}$ & $e_{10}$\\
\hline
\end{tabular}
\\
 &  \\
The arrangement $(10_3)$.v. & The arrangement $(10_3)$.vi. \\
& \\
\begin{tabular}{|ccc|ccccccc|}
\hline
$e_1$ & $e_1$ & $e_1$ & $e_2$   &   $e_4$ & $e_5$ & ${ e_2}$ &   ${ e_3}$   &   ${e_7}$ & $e_{6}$\\
$e_2$ & $e_4$ & $e_6$ & $e_8$   &   $e_8$ & $e_7$ & $e_4$ &      $e_5$  &       $e_3$ & $e_{9}$\\
$e_3$ & $e_5$ & $e_7$ & $e_{9}$ &   $e_{10}$ & $e_8$ & $e_{6}$ & $e_9$  &       $e_{10}$ & $e_{10}$\\
\hline
\end{tabular}
 &
\begin{tabular}{|ccc|ccccccc|}
\hline
$e_1$ & $e_1$ & $e_1$ & $e_3$   &   ${e_5}$  &    $e_7$ &    $e_2$ &     ${e_{6}}$  &   $e_4$ &    $e_{2}$\\
$e_2$ & $e_4$ & $e_6$ & $e_8$   &   $e_8$    &    ${e_9}$ & ${e_{7}}$ & $e_5$   &       $e_3$ &    $e_{4}$\\
$e_3$ & $e_5$  & $e_7$ & $e_9$  &   $e_{10}$ &    $e_{10}$ & $e_8$ &     $e_9$  &       $e_{10}$ & $e_{6}$\\
\hline
\end{tabular}
\\ &
\\
The arrangement $(10_3)$.vii. & The arrangement $(10_3)$.viii. \\
& \\
\begin{tabular}{|ccc|ccccccc|}
\hline
$e_1$ & $e_1$ & $e_1$ & $e_2$   &   ${e_4}$ &  $e_6$ &    $e_5$ &    ${e_{3}}$  &   $e_2$ &    $e_{3}$\\
$e_2$ & $e_4$ & $e_6$ & $e_8$   &   $e_8$ &    $e_9$ &    ${ e_7}$ & $e_5$  &       $e_7$ &    $e_{4}$\\
$e_3$ & $e_5$ & $e_7$ & $e_9$   &   $e_{10}$ & $e_{10}$ & $e_8$ &    $e_9$  &       $e_{10}$ & $e_{6}$\\
\hline
\end{tabular}
 &
\begin{tabular}{|ccc|ccccccc|}
\hline
$e_1$ & $e_1$ & $e_1$ & $e_3$   &   ${e_{2}}$ & $e_7$ &    $e_5$ &      ${e_6}$ &   $e_4$ & $e_{2}$\\
$e_2$ & $e_4$ & $e_6$ & $e_8$   &   $e_8$ &     ${ e_9}$ & ${ e_{7}}$ & $e_5$   &   $e_3$ & $e_{4}$\\
$e_3$ & $e_5$ & $e_7$ & $e_9$   &   $e_{10}$ &  $e_{10}$ & $e_8$ &      $e_9$   &   $e_{10}$ & $e_{6}$\\
\hline
\end{tabular}
\\
 &  \\
The arrangement $(10_3)$.ix. & The arrangement $(10_3)$.x. \\
 &  \\
\end{tabular}
}
\caption{The ($10_3$) arrangements of ten lines with ten triples as found in \cite{Grun}.  Nine of the ten are geometric, and each of these nine has an irreducible moduli space.}
\label{tab:10triples}
\end{table}

\begin{remark}
\label{rem:Desargues}
The arrangement ($10_3$).i. is called the Desargues arrangement, named after Girard Desargues, a French mathematician of the seventeenth century, whose theorem in projective geometry can be stated:  two triangles are in perspective axially if and only if they are in perspective centrally.  The arrangement comes from the geometry of this theorem.
\end{remark}

We perform geometric checks on these nine arrangements and arrive at the following conclusion.

\begin{theorem}
\label{thm:10triplesMOD}
For ten lines and ten triples, the nine geometric configurations have irreducible moduli spaces.  Thus there are no potential Zariski pairs of ten lines and ten triples.
\end{theorem}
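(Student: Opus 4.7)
The plan is to apply Algorithm \ref{algorithm} case by case to each of the nine geometric arrangements displayed in Table \ref{tab:10triples}, and then to invoke Proposition \ref{prop:blue} to conclude that no Zariski pairs arise. Because every $(10_3)$ arrangement has $\ell_3=10$ and $\ell_4=0$ by Lemma \ref{lemma:basic}, each line carries exactly three triples, so the lattice is highly constrained and the line equations are tightly determined by the triple-point incidences.

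First, I would look for shortcuts via the Nazir--Yoshinaga structure theorems. For any configuration in which all triple points lie on at most two lines, Theorem \ref{thm:C012} immediately gives irreducibility; and for any configuration that is simple $C_3$ in the sense of Definition \ref{def:simpleC3}, Theorem \ref{thm:simpleC3} does the same. I would scan the nine tables for three lines whose union covers all ten triples and check the simple-$C_3$ criterion, which I expect to dispatch several of the cases (for instance, the Desargues configuration $(10_3)$.i. admits such a covering from the classical ``axis plus two perspectivity lines'' description).

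For the remaining configurations I would run the Grid Lemma \ref{grid}: pick two disjoint pencils (in this setting, two lines sharing no triple together with a third generic line in each direction) and normalize to $x=0,z,az$ and $y=0,z,bz$, so that the moduli space embeds in $\mathbb{C}^2$ parametrized by $(a,b)$. The four remaining lines are then each forced through two of the already-determined triples, so their equations are rational in $a,b$; the triples not yet used translate via Property \ref{property:det} into polynomial relations $f_i(a,b)=0$. I would then either: verify $\dim \mathcal{M}=0$ by solving the system and checking the solution is a single $PGL_3$-orbit, or, if $\dim \mathcal{M}>0$, check irreducibility of the defining polynomial, invoking the Mathematica test or a by-hand discriminant argument in the style of Example \ref{11.B.3.a.-irr}.

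The main obstacle will be the bookkeeping: nine configurations times a careful choice of grid, and the possibility that a naive grid choice yields reducible equations that become irreducible only after relabeling. The second, subtler obstacle is ensuring that no configuration has two geometrically distinct components disguised as a single combinatorial type; I would address this by verifying at each step that the equations leave no free choice of branch (e.g., no unresolved $\pm$ from a quadratic), so the resulting $\mathcal{M}$ is either a single point or a geometrically irreducible curve. Once irreducibility is established in every case, $\mathcal{M}^{\mathbb{C}}_{\mathcal{A}}$ is irreducible a fortiori, and Proposition \ref{prop:blue} then yields that the fundamental group of the complement is a combinatorial invariant for every $(10_3)$ configuration, ruling out Zariski pairs in this range.
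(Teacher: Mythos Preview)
Your core approach---direct case-by-case computation via the Grid Lemma and irreducibility check, followed by Proposition~\ref{prop:blue}---is exactly what the paper does: it simply tabulates the line equations and the defining relations in Tables~\ref{tab:10.EQNS4}--\ref{tab:10.EQNS4two} and observes irreducibility in each case.

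Two small corrections to your plan, neither fatal. First, the Nazir--Yoshinaga shortcuts never fire here: in a $(10_3)$ configuration every line carries exactly three triples, so any three lines cover at most nine of the ten triple points, and no $(10_3)$ is of type $C_{\le 3}$ (in particular your claim about Desargues being simple $C_3$ is false). You would discover this immediately, and your fallback to the grid computation is the right move; the paper skips the shortcut entirely. Second, the moduli space does not embed in $\mathbb{C}^2$ as you suggest: after normalizing six lines via Lemma~\ref{grid}, the remaining four lines are not all forced through two already-known triples, and the paper introduces a third free parameter $c$ (e.g.\ via an undetermined point such as $e_{10}=(a,c)$). The defining relation is then a single polynomial in $(a,b,c)$, and the irreducibility check (by hand or via Mathematica, as in your plan) is applied to that. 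With these adjustments your outline matches the paper's proof.
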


\begin{proof}
The equations of the lines for each of the ten cases are given in Tables \ref{tab:10.EQNS4} and \ref{tab:10.EQNS4two}.  These were produced in the manner described above in Section \ref{subsec:linemethod}.
\end{proof}

\begin{table}[htbp]\renewcommand{\arraystretch}{2.25}
\begin{tabular}{|c||c|c|c|c|c|c|}
\hline
 \multicolumn{7}{|c|}{($10_3$). } \\
\hline
\hline
Arr. & $y=0,z,bz$ & $x=0,z,az$ & $y=Ax$ &	$y=Bx+z$ &	$y=C(x-z)$ & $y=D(x-z)+z$ \\
\hline
\hline
($10_3$).i. & $L_3,L_2,L_1$	& $L_4,L_5,L_6$	& $L_9$:  $\frac{c}{a}$ & $L_7$:  $\frac{c(b-1)}{ab}$	&	$L_{10}$:  $\frac{c}{a-1}$ &	$L_8$:  $\frac{c(b-1)}{b(a-1)}$ \\
\hline
 & \multicolumn{6}{|c|}{with $e_{10}=(a,c)$ }\\
\hline
\hline
($10_3$).ii. & $L_3,L_2,L_1$	& $L_4,L_5,L_6$	& $L_9$:  $\frac{b(c-1)}{a(b-1)}$ & $L_7$:  $\frac{c-1}{a}$	&	$L_8$:  $\frac{c}{a-1}$ &	$L_{10}$:  $\frac{(a-1)(b-1)}{c(b-1)-(a-1)}$ \\
\hline
& \multicolumn{6}{|c|}{with $e_8=(a,c)$ and satisfying $2b-1=0$ } \\
\hline
\hline
($10_3$).iii. & $L_3,L_2,L_1$	& $L_4,L_5,L_6$	& $L_8$:  $\frac{c}{a}$ & $L_7$:  $\frac{c-1}{a}$	&	$L_9$:  $\frac{b(c-1)}{ab-a-c+1}$ &	$L_{10}$:  $\frac{c(b-1)}{ab-c}$ \\
\hline
 & \multicolumn{6}{|c|}{with $e_8=(a,c)$ and satisfying $1-a-b-c+2ab=0^*$ }\\
\hline
\hline
($10_3$).iv. & $L_3,L_2,L_1$	& $L_4,L_5,L_6$	& $L_8$:  $\frac{c}{a}$ & $L_7$:  $\frac{c-1}{a}$	&	$L_{10}$:  $\frac{bc}{ab-c}$ &	$L_9$:  $\frac{(b-1)(c-1)}{ab-a-c+1}$ \\
\hline
 & \multicolumn{6}{|c|}{with $e_8=(a,c)$ and satisfying $c-b=0$, a contradiction }\\
\hline
\hline
($10_3$).v. & $L_1,L_2,L_3$	& $L_4,L_5,L_6$	& $L_7$:  $\frac{c}{a}$ & $L_8$:  $\frac{c-1}{a}$	&	$L_9$:  $\frac{b(c-1)}{ab-a-c+1}$ &	{} \\
\hline
 & \multicolumn{6}{|c|}{with $e_9=(a,c)$, $L_{10}: y=\frac{c(b-1)}{c-a}(x-z)+bz$ and  }\\ 
 & \multicolumn{6}{|c|}{satisfying $b(c-1)(a-1)(c-a)-(ab-a-c+1)(abc-ab-ac+c)=0^*$ } \\
\hline
\hline
($10_3$).vi. & $L_1,L_2,L_3$	& $L_4,L_5,L_6$	& $L_7$:  $\frac{c}{a}$ & $L_{10}$:  $\frac{c(b-1)}{ab}$	&	$L_9$:  $\frac{b-c}{(a-1)(b-1)}$ &	{} \\
\hline
 & \multicolumn{6}{|c|}{with $e_9=(a,c)$, $L_8: y=\frac{c-b}{a-1}(x-z)+bz$ and satisfying $c(b^2-b+1)-b=0$ } \\
\hline
\end{tabular}
\caption{Equations for arrangements with ten triples.}
\label{tab:10.EQNS4}
\end{table}

\begin{table}[htbp]\renewcommand{\arraystretch}{2.25}
\begin{tabular}{|c||c|c|c|c|c|c|}
\hline
 \multicolumn{7}{|c|}{($10_3$). continued } \\
\hline
\hline
Arr. & $y=0,z,bz$ & $x=0,z,az$ & $y=Ax$ &	$y=Bx+z$ &	$y=C(x-z)$ & $y=D(x-z)+z$ \\
\hline
\hline
($10_3$).vii. & $L_2,L_3,L_1$	& $L_6,L_5,L_4$	& $L_8$:  $\frac{b(c-1)}{b-1}$ & $L_9$:  $c-1$	&	$L_7$: $\frac{b}{a-1}$ &	{} \\
\hline
 & \multicolumn{6}{|c|}{with $e_{10}=(1,c)$, $L_{10}: y=\frac{b(1-c)}{a-1}(x-z)+cz$ } \\
 & \multicolumn{6}{|c|}{and satisfying $(ab+(b-1)^2)c-(a+b-a)b=0^*$} \\
\hline
\hline
($10_3$).viii. & $L_{10},L_1,L_7$	& $L_2,L_8,L_5$	& $L_9$:  $\frac{c}{a}$ & $L_3$:  $-1$	&	{} &	{} \\
\hline
 & \multicolumn{6}{|c|}{with $e_{10}=(a,c)$, $L_4: y=\frac{c(b-1)}{a(c-1)}(x-az)+bz$, $L_6: y=\frac{c-b}{a+b-1}(x-az)+cz$ } \\
 & \multicolumn{6}{|c|}{and satisfying $c(b-a)(1-a)(a+b-1)+ab(b-c)(c-1)=0^*$ } \\
\hline
\hline
($10_3$).ix. & $L_2,L_3,L_1$	& $L_7,L_5,L_4$	& $L_8$:  $\frac{b}{bc-b+1}$ & $L_7$:  $\frac{b-1}{a}$	&	$L_{10}$:  $\frac{1}{c-1}$ &	{} \\
\hline
 & \multicolumn{6}{|c|}{with $e_6=(c,1)$, $L_6: y=\frac{b-1}{a(1-c)}(x-cz)+z$ and satisfying} \\
 & \multicolumn{6}{|c|}{$a^2b(c-1)+(b(c-1)+1)((b-1)(a-c)+a(1-c))=0$, irreducible by hand } \\
\hline
\hline
($10_3$).x. & $L_1,L_2,L_3$	& $L_9,L_5,L_6$	& $L_4$:  $c$ & $L_{10}$:  $-1$	&	{} &	{} \\
\hline
 & \multicolumn{6}{|c|}{with $e_8=(1,c)$, $L_7: y=\frac{b-c}{a-1}(x-z)+cz$, $L_8: y=\frac{b-ac}{1-a-b}(x-z+bz)+bz$ and satisfying } \\
 & \multicolumn{6}{|c|}{ $(b-ac)((1-c)(a-1)+b(b-c))+(b-1)(b-c)(1-b-a)=0$, irreducible by hand } \\
\hline
\end{tabular}
\caption{Equations for arrangements with ten triples.}
\label{tab:10.EQNS4two}
\end{table}

\section{Arrangements of ten lines with thirteen triples}
\label{sec:13}

\begin{theorem}
\label{thm:13}
There are two combinatorial configurations of ten lines and thirteen triples, given as configuration tables in Table \ref{tab:13triples}.  Neither of these are geometric, as shown by contradicting equations in Table \ref{tab:13.EQNS}.  Thus there are no potential Zariski pairs of ten lines and thirteen triples.
\end{theorem}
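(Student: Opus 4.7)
The plan is to exploit the fact that an arrangement with $n_3 = 13$ has, by Lemma \ref{lemma:basic} and Table \ref{tab:triplenumbers}, exactly one line $L_{10}$ carrying just three triples, while the other nine lines each carry four. This single distinguished line suggests a reduction: deleting $L_{10}$ from $\mathcal{A}$ yields a subarrangement $\mathcal{A}' = \mathcal{A} \setminus \{L_{10}\}$ of nine lines in which each of the three triples on $L_{10}$ drops to a double while the remaining ten triples of $\mathcal{A}$ persist. Proposition \ref{prop:comb9lines10triples} then constrains $\mathcal{A}'$ to be one of exactly three combinatorial possibilities: the Nazir--Yoshinaga arrangement, the Pappus arrangement, or the non-geometric degeneration of $(9_3)$.iii.

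In each case Fact \ref{lemma:intersection-formula} gives $n_2 = \binom{9}{2} - 3 \cdot 10 = 6$ doubles in $\mathcal{A}'$, and the equations of Lemma \ref{lemma:eqns} force $\mathcal{A}'$ to split into three lines carrying four triples and six lines carrying three. Recovering $\mathcal{A}$ amounts to selecting a triple of these doubles to lie collinearly on the new line $L_{10}$. The key combinatorial constraint is that inserting $L_{10}$ must upgrade precisely the six ``three-triple'' lines to four triples and leave the three ``four-triple'' lines untouched; equivalently, the three chosen doubles must form a perfect matching of the six three-triple lines of $\mathcal{A}'$. First I would enumerate the doubles of each of the three candidate subarrangements, flag the six three-triple lines, and search for perfect matchings. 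Careful bookkeeping, together with the lattice automorphisms of the underlying $(9_3)$ structure, should leave only two inequivalent combinatorial candidates, namely 13.i.\ and 13.ii.\ in Table \ref{tab:13triples}.

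For each of the two surviving candidates I would then execute Algorithm \ref{algorithm}. Using the Grid Lemma \ref{grid} together with the coordinate equations already tabulated for the $(9_3)$ and ten-triple subarrangements in Tables \ref{tab:93EQNS} and \ref{tab:10-3EQNS}, all lines of $\mathcal{A}'$ acquire explicit equations in parameters $a, b, \dots$ subject to known polynomial constraints; adding the single determinantal equation $\det(P_i, P_j, P_k) = 0$ that encodes collinearity of the three selected doubles on $L_{10}$ yields a zero-dimensional polynomial system. The claim of Table \ref{tab:13.EQNS} is that in both candidates this system is inconsistent, so $\mathcal{M}_C = \emptyset$ and the configuration is not geometrically realizable; since no geometric arrangement arises, there are no Zariski pairs with thirteen triples.

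The main obstacle is the combinatorial enumeration in the first step: among the $\binom{6}{3} = 20$ triples of doubles in each nine-line subarrangement, one must identify the subset satisfying the matching constraint, identify those that would force $L_{10}$ to incidentally pass through an existing triple (introducing a forbidden higher-multiplicity point), and then quotient by lattice automorphisms to confirm that exactly two inequivalent combinatorial configurations survive. Once the two cases are isolated, the algebraic contradiction step is routine, since the moduli spaces are zero-dimensional and the defining equations collapse quickly.
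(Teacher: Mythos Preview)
Your proposal is correct and follows essentially the same approach as the paper: delete the unique three-triple line $L_{10}$, invoke Proposition~\ref{prop:comb9lines10triples} to reduce to the three nine-line ten-triple arrangements, enumerate the ways to reinsert $L_{10}$, and then check non-realizability algebraically via Algorithm~\ref{algorithm}.

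Your perfect-matching formulation is a pleasant dual rephrasing of the paper's analysis. The paper instead describes the six doubles of $\mathcal{A}'$ as vertices of a graph whose edges record ``share a line,'' observes that for Nazir--Yoshinaga and the degenerate $(9_3)$.iii.\ this graph is a hexagon (so there are two independent triples of doubles, exchanged by a symmetry), while for Pappus it is two disjoint triangles (so no independent triple exists). Your version views lines as vertices and doubles as edges and asks for a perfect matching of the six three-triple lines; since every double of $\mathcal{A}'$ lies on two three-triple lines (the three four-triple lines meet all others in triples), the two formulations are equivalent. One small clarification: the worry about $L_{10}$ ``incidentally passing through an existing triple'' is not a combinatorial obstruction at the enumeration stage---it could only arise in the geometric check, and here the system is already inconsistent before any such degeneration can occur. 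In fact, for 13.ii.\ the subarrangement $\mathcal{A}'$ is itself non-geometric, so no further equation is needed.
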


\begin{table}[htbp]
\begin{tabular}{cc}

\begin{tabular}{|ccc|cc|cccc|c|}
\hline
$L_1$ &	$L_2$ &	$L_3$ & $L_4$	& $L_5$ &	$L_6$ &	$L_7$ & $L_8$ &	$L_9$ & $L_{10}$\\
\hline
$e_1$ & $e_1$ & $e_1$ & $e_2$	&	$e_3$ & $e_4$ & $e_2$ & $e_3$	&	$e_4$ & $A$\\
$e_2$ & $e_5$ & $e_8$ & $e_5$	&	$e_6$ & $e_7$ & $e_7$ & $e_5$	&	$e_6$ & $C$\\
$e_3$ & $e_6$ & $e_9$ & $e_8$	&	$e_{10}$ & $e_{10}$ & $e_9$ & $e_9$	&	$e_8$ & $E$\\
$e_4$ & $e_7$ & $C$   & $e_{10}$ &	$C$ & $A$ & $E$   & $A$	  &	$E$   & \\
\hline
\end{tabular}

&

\begin{tabular}{|ccc|cc|cc|	cc|c|}
\hline
$L_1$ &	$L_2$ &	$L_3$ & $L_4$	& $L_5$ &	$L_6$ &	$L_7$ & $L_8$ &	$L_9$ & $L_{10}$\\
\hline
$e_1$ & $e_1$ & $e_1$ & $e_2$	&	$e_2$ & $e_3$ & $e_3$ & $e_4$	&	$e_4$ & $e_8$\\
$e_2$ & $e_5$ & $e_{11}$ & $e_7$	&	$e_6$ & $e_7$ & $e_5$ & $e_6$	&	$e_5$ & $e_9$\\
$e_3$ & $e_6$ & $e_{12}$ & $e_8$	&	$e_{10}$ & $e_{10}$ & $e_9$ & $e_9$	&	$e_8$ & $e_{10}$\\
$e_4$ & $e_7$ & $e_{13}$ & $e_{13}$	&	$e_{11}$ & $e_{12}$ & $e_{13}$ & $e_{12}$	&	$e_{11}$ & \\
\hline
\end{tabular}

\\

 &  \\

The non-geometric arrangement 13.i. & The non-geometric arrangement 13.ii. \\

 &  
 
\end{tabular}
\caption{The two combinatorial (but not geometric) arrangements of ten lines with thirteen triples.}
\label{tab:13triples}
\end{table}

\begin{table}[htbp]\renewcommand{\arraystretch}{2.25}
\begin{tabular}{|c||c|c|c|c|c|c|}
\hline
 \multicolumn{7}{|c|}{13. } \\
\hline
\hline
Arr. & $y=0,z,bz$ & $x=0,z,az$ & $y=Ax$ &	$y=Bx+z$ &	$y=C(x-z)$ & $y=Ex+bz$ \\
\hline
\hline
13.i. & $L_3,L_2,L_1$	& $L_4,L_5,L_6$	& $L_9$:  $1$ & $L_8$:  $a-1$	&	$L_{10}$:  $\frac{a^2-a+1}{a-1}$ &	$L_7$: $\frac{1-a}{a}$ \\
\hline
 & \multicolumn{6}{|c|}{satisfying $a-b=0$, $a^2+1=0$, and $a^3-3a^2+2a-1=0$, a contradiction }\\
\hline
\hline
13.ii. & $L_1,L_2,L_3$	& $L_5,L_6,L_{10}$	&$L_4$:  $1$ & $L_8$:  $b-1$	&	$L_7$:  $\frac{b}{b-1}$ & $L_{9}$:  $\frac{(a-b)}{a}$ \\
\hline
& \multicolumn{6}{|c|}{satisfying $a(b^2+b-1)-b(2b-1)=0$, $a(-b^2+b+1)-b=0$,}\\
& \multicolumn{6}{|c|}{ and $a(b-(b-1)^2)-(2b-1)=0$, a contradiction  } \\
\hline
\end{tabular}
\caption{Equations for (non-geometric) arrangements with thirteen triples.}
\label{tab:13.EQNS}
\end{table}

\begin{proof}
Let $\mathcal{A}$ be an arrangement of ten lines with thirteen triples.  Nine of the lines pass through exactly four triples, and one line, say $L_{10}$, passes through exactly three triples.  Let $\mathcal{A}':=\mathcal{A}\backslash\{L_{10}\}$ be the deletion of $L_{10}$ from the original arrangement.  Then $\mathcal{A}'$ has nine lines with ten triples, each line passing through at least three triples.



According to Proposition \ref{prop:comb9lines10triples}, the arrangement $\mathcal{A}'$ is either the Nazir-Yoshinaga arrangement, the Pappus arrangement, or a non-geometrically realizable arrangement.

The Nazir-Yoshinaga arrangement $\mathcal{A}^{\pm i}$ has six doubles, and so when the tenth line is added back, it must pass through exactly three of these.  The six doubles can be arranged as the vertices of a hexagon with lines as edges passing between them in this cyclic manner:  $L_3,L_5,L_7,L_9,L_8,L_6$.  Thus for a line to pass through three of these doubles, it must skip every other one in this cycle.  This should give two possibilities, but the symmetry $(L_1 L_2)(L_5 L_6)(L_7 L_8)$ sends one to the other.

The combinatorial arrangement here is given as 13.i. in Table \ref{tab:13triples} and Figure \ref{fig:13triples1}, but the equations given in Table \ref{tab:13.EQNS} preclude it from being geometric.

\begin{figure}[htbp]
\begin{center}
\begin{tikzpicture}


\draw[domain=-.2:2,smooth,variable=\x] plot (\x,{-1.73*\x+2}) node[below] {$L_1$};
\draw[domain=-2:.2,smooth,variable=\x] plot (\x,{1.73*\x+2}) node[above right] {$L_2$};
\draw[domain=-3.25:2.5,smooth,variable=\y] plot (0,\y) node[above] {$L_3$};
\draw[domain=-2.25:2.25,smooth,variable=\x] plot (\x,{-1}) node[right] {$L_4$};
\draw[domain=-.075:1.2,smooth,variable=\x] plot (\x,{4.04*\x-3}) node[above] {$L_5$};
\draw[domain=-2:3,smooth,variable=\x] plot (\x,{.29*(\x+.87)+.5}) node[below right] {$L_6$};
\draw[domain=-3:2.25,smooth,variable=\x] plot (\x,{-.58*\x}) node[below right] {$L_7$};
\draw[domain=-2.25:3,smooth,variable=\x] plot (\x,{.58*\x}) node[above right] {$L_8$};
\draw[domain=-3:1,smooth,variable=\x] plot (\x,{-.96*\x-1}) node[below right] {$L_9$};
\draw (0,0) circle (3);
\draw (3,0) node [right] {$L_{10}$};

	\fill[color=black] (0,2) circle (2pt) node[right] {$e_1$};
	\fill[color=black] (1.73,-1) circle (2pt) node[below left] {$e_2$};
	\fill[color=black] (.87,.5) circle (2pt) node[right] {$e_3$};
	\fill[color=black] (-1.4,.35) circle (2pt) node[below] {$e_4$};
	\fill[color=black] (-1.73,-1) circle (2pt) node[above left] {$e_5$};
	\fill[color=black] (.4,-1.38) circle (2pt) node[right] {$e_6$};
	\fill[color=black] (-.87,.5) circle (2pt) node[above] {$e_7$};
	\fill[color=black] (0,-1) circle (2pt) node[below left] {$e_8$};
	\fill[color=black] (0,-3) circle (2pt) node[above left] {$e_9$};
	\fill[color=black] (0,0) circle (2pt) node[right] {$e_{10}$};
	\fill[color=black] (1,1.04) circle (2pt) node[above left] {$e_{11}$};
	\fill[color=black] (2.61,1.5) circle (2pt) node[above left] {$e_{12}$};
	\fill[color=black] (-2.61,1.5) circle (2pt) node[above right] {$e_{13}$};

	\draw[-, dotted] (-2,-1.46) -- (.4,-1.38);
	\draw[-, dotted] (2.25,-1) -- (1,1.04);
	\draw[-, dotted] (-.2,2.35) -- (-1.4,.35);

\end{tikzpicture}
	\caption{The combinatorial arrangement 13.i. of ten lines with thirteen triples.}
	\label{fig:13triples1}
\end{center}
\end{figure}



The Pappus arrangement has six doubles, and so when the tenth line is added back, it must pass through exactly three of these.  The six doubles can be arranged as the vertices of two disjoint triangles with lines as edges passing between them in this (disjoint) cyclic manner:  $L_1,L_7,L_6$ and $L_2,L_8,L_5$.  Thus no line may pass through three of these, and so there is not even a combinatorial arrangement that arises in this way.

The non-geometrically realizable arrangement of nine lines with ten triples has six doubles, and so when the tenth line is added back, it must pass through exactly three of these.  The six doubles can be arranged as the vertices of a hexagon with lines as edges passing between them in this cyclic manner:  $L_2,L_9,L_4,L_3,L_7,L_6$.  Thus for a line to pass through three of these doubles, it must skip every other one in this cycle.  This should give two possibilities, but symmetry sends one to the other.

\begin{figure}[htbp]
\begin{center}
\begin{tikzpicture}
	\draw[-] (-1,0) -- +(6,0);
	\draw[-] (-1,2) -- +(6,0);
	\draw (-1.5,0) node {$L_1$};
	\draw (-1.5,2) node {$L_2$};

	\draw[-] (-1,-1) -- (3,3);
	\draw[-] (5,-1) -- (1,3);
	\draw[-] (-1,3) -- (3,-1);
	\draw[-] (5,3) -- (1,-1);
	\draw (-1,3.5) node {$L_7$};
	\draw (1,3.5) node {$L_8$};
	\draw (3,3.5) node {$L_5$};
	\draw (5,3.5) node {$L_6$};

	\draw[-] (-1,-.5) -- (5,2.5);
	\draw[-] (5,-.5) -- (-1,2.5);
	\draw (5.5,-.5) node {$L_9$};
	\draw (5.5,2.5) node {$L_4$};

	\draw (-1.5,1) node {$L_3$};
	\draw (0,3.5) node {$L_{10}$};

	\fill[color=black] (-1,1) circle (2pt) node[below] {$e_1$} node[above] {$\leftarrow$};
	\fill[color=black] (0,0) circle (2pt) node[below right] {$e_2$};
	\fill[color=black] (2,0) circle (2pt) node[below] {$e_3$};
	\fill[color=black] (4,0) circle (2pt) node[below left] {$e_4$};
	\fill[color=black] (0,2) circle (2pt) node[above right] {$e_5$};
	\fill[color=black] (2,2) circle (2pt) node[above] {$e_6$};
	\fill[color=black] (4,2) circle (2pt) node[above left] {$e_7$};
	\fill[color=black] (2,1) circle (2pt) node[left] {$e_8$};
	\fill[color=black] (0,3) circle (2pt) node[left] {$e_9$} node[right] {$\nwarrow$};
	\fill[color=black] (4,3) circle (2pt) node[right] {$e_{10}$} node[left] {$\nearrow$};
	\fill[color=black] (1.33,1.33) circle (2pt) node[above] {$e_{11}$};
	\fill[color=black] (3,1) circle (2pt) node[right] {$e_{12}$};
	\fill[color=black] (1.33,.66) circle (2pt) node[below] {$e_{13}$};

	\fill[color=black] (2,1) circle (2pt);
	\fill[color=black] (3,1) circle (2pt);
	\fill[color=black] (1.33,.66) circle (2pt);
	\fill[color=black] (1.33,1.33) circle (2pt);


\draw[dashed] (-1,1) -- (1.33,1.33) -- (3,1) -- (1.33,.66) -- (1,.66);
\draw[dotted] (0,3) -- (2,1) -- (4,3);

\end{tikzpicture}
	\caption{The combinatorial arrangement 13.ii. of ten lines with thirteen triples.}
	\label{fig:13triples2}
\end{center}
\end{figure}

  The combinatorial arrangement here is given as 13.ii. in Table \ref{tab:13triples} and Figure \ref{fig:13triples2}, but the equations given in Table \ref{tab:13.EQNS} preclude it from being geometric. 
\end{proof}

\section{Arrangements of ten lines with twelve triples}
\label{sec:12}

\begin{theorem}
\label{thm:12}
There are twenty-two combinatorial configurations of ten lines and twelve triples.  
  All but three of these are geometric, and four of these remaining nineteen are potential Zariski pairs.
\end{theorem}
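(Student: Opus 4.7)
The plan is to split arrangements of ten lines with twelve triples into two regimes — those that \emph{reduce} to a $(9_3)$ arrangement by deleting a line through three triples, and those that do not — and then exhaust the casework in each regime, so that Theorem \ref{thm:12} is assembled from the five subordinate lemmas summarized in Table \ref{tab:summaryALT}.

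First I would invoke Lemma \ref{lemma:basic} to pin down the line-multiplicity structure: every arrangement of ten lines with twelve triples has $\ell_3=4$ lines carrying three triples and $\ell_4=6$ lines carrying four triples. With this rigidity in hand I would pick a line $L\in\mathcal{A}$ with three triples and examine the deletion $\mathcal{A}\setminus\{L\}$, which is a nine-line arrangement with nine triples. When this deletion happens to be a $(9_3)$ configuration — i.e.\ when each of the six $4$-triple lines of $\mathcal{A}$ meets $L$ in exactly one of the three triples of $L$ — Proposition \ref{prop:9-3} supplies three sub-cases $(9_3).i.$, $(9_3).ii.$, $(9_3).iii.$, and Lemmas \ref{lem:12Reduction-a}, \ref{lem:12Reduction-b}, \ref{lem:12Reduction-c} handle them in turn. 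The task in each sub-case is purely combinatorial: enumerate, modulo the combinatorial symmetry group of the $(9_3)$, the collinear triples of double points through which the restored tenth line $L_{10}$ may pass without creating a fourth triple on any existing line.

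Second, I would address the residual configurations — those for which no deletion of a $3$-triple line produces a $(9_3)$. Here I would fix a generic three-line subarrangement (no common point) among the $\ell_4=6$ lines and stratify further by the number of its three double points that are upgraded to triples of $\mathcal{A}$: three such upgrades yield the $12.$B$.3.$ family treated by Lemma \ref{lem:12non3}, two yield the $12.$B$.2.$ family treated by Lemma \ref{lem:12non2}, and fewer than two are excluded by combining Assumption \ref{ass:reduce} with the counting in Lemma \ref{lemma:basic}. For every combinatorial skeleton that survives either route I would then run Algorithm \ref{algorithm}: apply the Grid Lemma \ref{grid} to put six of the lines in normal form as $x=0,z,az$ and $y=0,z,bz$, parameterize each remaining line by the triples it is forced to pass through, and extract the defining polynomial equations in $(a,b)$ from the vanishing-determinant criterion of Property \ref{property:det}. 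The moduli space $\mathcal{M}_C$ of the configuration table is then analyzed for irreducibility, non-emptiness, and potential Zariski-pair status either by hand, as in Example \ref{11.B.3.a.-irr}, or with the Mathematica irreducibility test $(*)$ of Algorithm \ref{algorithm}.

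The main obstacle will be the casework in the non-reduction regime, where no clean inductive handle is available. Three delicate points require special care: ensuring completeness of the combinatorial enumeration (in particular that symmetry-based reductions do not drop a genuine sub-case), correctly detecting accidental coincidences forced by the defining polynomials (which may collapse two lines, create a quadruple point, or make $\mathcal{M}_C$ empty), and — when $\dim\mathcal{M}_C=0$ and the solutions are complex — verifying in the spirit of Example \ref{example:cpx-cnjgt} that the two conjugate solutions genuinely realize the prescribed lattice with no accidental incidences. Once these checks are completed for every configuration, summing the outputs of Lemmas \ref{lem:12Reduction-a}--\ref{lem:12non2} recovers the counts claimed in Theorem \ref{thm:12}.
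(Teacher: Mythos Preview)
Your reduction half is essentially right, but the non-reduction half rests on a misidentification of the organizing subarrangement. In the paper the distinguished lines are the \emph{four} lines with three triples each (the $\ell_3$ lines $L_1,L_2,L_3,L_{10}$), not three of the $\ell_4$ lines. The labels $12.\mathrm{B}.3.$ and $12.\mathrm{B}.2.$ refer to the generic subarrangement of these \emph{four} $\ell_3$ lines, and the trailing digit counts how many of its \emph{six} doubles are triples in $\mathcal{A}$ --- not how many of the three doubles of some three-line subarrangement are upgraded. So your sentence ``three such upgrades yield the $12.\mathrm{B}.3.$ family \ldots\ two yield the $12.\mathrm{B}.2.$ family'' does not describe what Lemmas~\ref{lem:12non3} and~\ref{lem:12non2} actually classify, and your exclusion of ``fewer than two'' has no content: for three arbitrary $\ell_4$ lines nothing in Lemma~\ref{lemma:basic} forbids zero or one of their doubles from being triples.

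With the correct four-line viewpoint the paper's non-reduction analysis has two extra layers you omit. First, the four $\ell_3$ lines need not be generic: they can form the ``easel'' $\mathpzc{E}$ (three concurrent through a triple, the fourth generic to them), and one must rule out via Lemmas~\ref{lem:6lines4pts}, \ref{lem:6lines4triples}, \ref{lem:6lines3triples} the sub-cases where one, two, or three of the easel's doubles are triples, the remaining sub-case feeding back into the reduction. Second, in the generic four-line case one must first dispose of the possibilities that six, five, or four of the six doubles are triples (again by counting against Lemmas~\ref{lem:6lines4pts}--\ref{lem:6lines3triples}), and then note that three or two doubles being triples splits further: some sub-configurations still admit a reducing line and are already covered by the $(9_3)$ lemmas, while the genuinely non-reducible ones are exactly what Lemmas~\ref{lem:12non3} and~\ref{lem:12non2} enumerate.
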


\begin{proof}
Let $\mathcal{A}$ be arrangement of ten lines with twelve triples.  There are exactly four lines that have exactly three triple points on each of them; call these $L_1$, $L_2$, $L_3$, and $L_{10}$.  We consider the possible subarrangements of just these four lines.  Observe that there can be no central subarrangement of these four lines because we do not consider quadruple points.  Other than the generic subarrangement with six doubles, there is only one other subarrangement:  $\mathpzc{E}$ with one triple and three doubles, resembling an artist's easel.

\subsection{Easel subarrangement}
\label{subsec:12easel}

Let $e_1$ be this triple on $L_1$, $L_2$, and $L_3$, and consider the three double points on the subarrangement $\mathpzc{E}$ of these lines with $L_{10}$.

If all three of these doubles in $\mathpzc{E}$ are indeed triples in $\mathcal{A}$, then the lines $L_1$, $L_2$, $L_3$, and $L_{10}$ contribute a total of seven triples, leaving five triples left among the remaining six lines.  However, this contradicts Lemma \ref{lem:6lines4pts}.

If exactly two of these three doubles in $\mathpzc{E}$ are indeed triples in $\mathcal{A}$, then the lines $L_1$, $L_2$, $L_3$, and $L_{10}$ contribute a total of eight triples, leaving four triples left among the remaining six lines.  By Lemma \ref{lem:6lines4triples}, there is a unique subarrangement $\mathcal{C}$ of these six lines with exactly four triples.  However, this subarrangement $\mathcal{C}$ has only three doubles, and since $\mathpzc{E}$ has no more doubles that can become triples in $\mathcal{A}$, there are not enough triples to account for the remaining five in $\mathcal{A}$, a contradiction.

If exactly one of these three doubles in $\mathpzc{E}$ is indeed a triple in $\mathcal{A}$, then the lines $L_1$, $L_2$, $L_3$, and $L_{10}$ contribute a total of nine triples, leaving three triples left among the remaining six lines.  By Lemma \ref{lem:6lines3triples}, there is a unique subarrangement $\mathcal{B}$ of these six lines with exactly three triples.  However, this subarrangement $\mathcal{B}$ has only six doubles, and since $\mathpzc{E}$ has no more doubles that can become triples in $\mathcal{A}$, there are not enough triples to account for the remaining seven in $\mathcal{A}$, a contradiction.

 If none of these three doubles in $\mathpzc{E}$ is indeed a triple in $\mathcal{A}$, then we use a reduction argument as in the case of thirteen triples.  Let $\mathcal{A}':=\mathcal{A}\backslash\{L_{10}\}$ be the deletion of $L_{10}$ from the original arrangement.  Then $\mathcal{A}'$ has nine lines with nine triples, each line passing through \emph{exactly} three triples, and we consider this case below.


\subsection{A reduction as in thirteen triples}  
\label{subsec:12red}  
Consider the four lines $L_1$, $L_2$, $L_3$, and $L_{10}$ with exactly three triples on each, and suppose that in general the line $L_{10}$ intersects the other three lines at points which are only doubles in the original arrangment $\mathcal{A}$.
%
  Let $\mathcal{A}':=\mathcal{A}\backslash\{L_{10}\}$ be the deletion of $L_{10}$ from the original arrangement.  Then $\mathcal{A}'$ has nine lines with nine triples, each line passing through \emph{exactly} three triples.

By Proposition \ref{prop:9-3} there are only three such combinatorial or geometric arrangements:  $(9_3)$.i., $(9_3)$.ii., and $(9_3)$.iii., as appearing in Figure \ref{fig:9-3}.

\begin{lemma}
\label{lem:12Reduction-a}
For the reduction of twelve triples giving case $(9_3)$.i., we have five combinatorial configurations, each with a one-parameter family of geometric configurations as given in Table \ref{tab:93.i.EQNS}, and so each has an irreducible moduli space.  Thus there are no potential Zariski pairs.
\end{lemma}

\begin{proof}
We begin by considering the combinatorial configuration without worrying about its geometric realization as in Figure \ref{fig:931}.  The nine doubles $A,\ldots,I$ are labelled; observe that these are arranged in three disjoint triangles using the nine lines: $A,C,E$; $B,D,F$; and $G,H,I$.  Thus for the line $L_{10}$ to pass through three of these, it must pass through exactly one from each triangle.

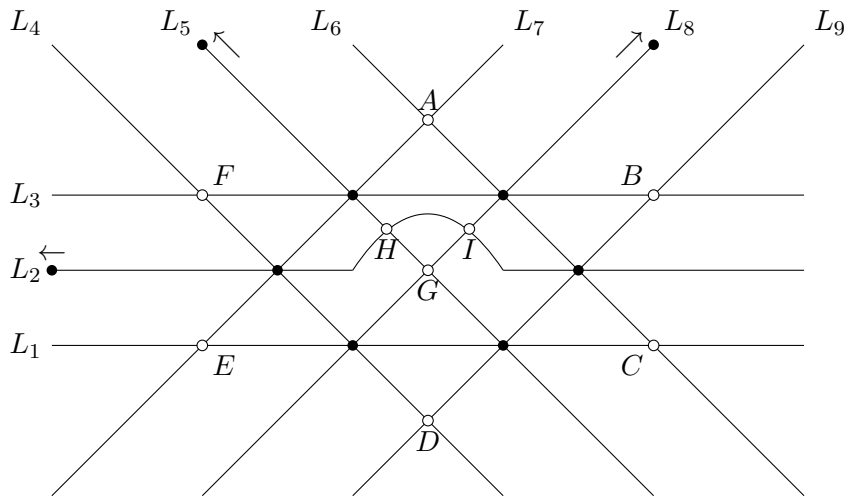
\begin{figure}[htbp]
\begin{center}
\begin{tikzpicture}
	\draw[-] (9,1) -- +(-10,0) node[left] {$L_1$};
	\draw[-] (3,2) -- (-1,2) node[left] {$L_2$};
	\draw[-] (5,2) -- (9,2);
	\draw[-] (9,3) -- +(-10,0) node[left] {$L_3$};

	\draw (3,2) .. controls (3.66,3) and (4.33,3) .. (5,2);

	\draw[-] (-1,-1) -- (5,5) node[above right] {$L_7$};
	\draw[-] (1,-1) -- (7,5) node[above right] {$L_8$};
	\draw[-] (3,-1) -- (9,5) node[above right] {$L_9$};
	\draw[-] (5,-1) -- (-1,5) node[above left] {$L_4$};
	\draw[-] (7,-1) -- (1,5) node[above left] {$L_5$};
	\draw[-] (9,-1) -- (3,5) node[above left] {$L_6$};



	\fill[color=white] (4,4) circle (2pt);
	\draw (4,4) circle (2pt) node[above] {$A$};
	\fill[color=white] (7,3) circle (2pt);
	\draw (7,3) circle (2pt) node[above left] {$B$};
	\fill[color=white] (7,1) circle (2pt);
	\draw (7,1) circle (2pt) node[below left] {$C$};
	\fill[color=white] (4,0) circle (2pt);
	\draw (4,0) circle (2pt) node[below] {$D$};
	\fill[color=white] (1,1) circle (2pt);
	\draw (1,1) circle (2pt) node[below right] {$E$};
	\fill[color=white] (1,3) circle (2pt);
	\draw (1,3) circle (2pt) node[above right] {$F$};
	\fill[color=white] (4,2) circle (2pt);
	\draw (4,2) circle (2pt) node[below] {$G$};
	\fill[color=white] (4.55,2.55) circle (2pt);
	\draw (4.55,2.55) circle (2pt) node[below] {$I$};
	\fill[color=white] (3.45,2.55) circle (2pt);
	\draw (3.45,2.55) circle (2pt) node[below] {$H$};

	\fill[color=black] (1,5) circle (2pt) node[right] {$\nwarrow$}; 
	\fill[color=black] (7,5) circle (2pt) node[left] {$\nearrow$}; 
	\fill[color=black] (-1,2) circle (2pt) node[above] {$\leftarrow$}; 

\foreach \x/\y in {5/3,6/2,5/1,3/1,2/2,3/3}
	\fill[color=black] (\x,\y) circle (2pt);

\end{tikzpicture}
	\caption{A combinatorial arrangement for $(9_3)$.i. showing its symmetry.}
	\label{fig:931}
\end{center}
\end{figure}

Without loss of generality we may assume the line passes through $C$.  Up to symmetry there are just two choices for the next triangle: { $D(=B)$ and $F$. } In the first case there is no symmetry left so there are three choices ($G,H,I$), but in the second case there is still symmetry leaving two choices {$G(=I)$ and $H$.} 

Equations for a geometric realization of $(9_3)$.i. given in Table \ref{tab:93EQNS} give coordinates for these doubles, and the equations given in Table \ref{tab:93.i.EQNS} demonstrate that all five of these combinatorial arrangements are geometric with irreducible moduli spaces.
\end{proof}

\begin{table}[htbp]\renewcommand{\arraystretch}{2.25}
\begin{tabular}{|c||c|c|}
\hline
 \multicolumn{3}{|c|}{Adding a tenth line to ($9_3$).i. }\\
\hline
\hline
$10^{th}$ line & Equation & satisfying  \\
\hline
\hline
CDG & $y=\frac{1}{a(a-1)}(x-az)$	& $b+1=0$ \\
\hline
\hline
CDH & $y=\frac{1}{a(a-1)}(x-az)$	& $a+1=0$ \\
\hline
\hline
CDI & $y=\frac{1}{a(a-1)}(x-az)$	& $ab-1=0$ \\
\hline
\hline
CFG & $y=\frac{-b}{a}x+bz$	& $a-2=0$ \\
\hline
\hline
CFH & $y=\frac{-b}{a}x+bz$	& $a(b-1)-b=0$ \\
\hline
\end{tabular}
\caption{Equations for (geometric) arrangements from ($9_3$).i.}
\label{tab:93.i.EQNS}
\end{table}

\begin{lemma}
\label{lem:12Reduction-b}
For the reduction of twelve triples giving case $(9_3)$.ii., we have four combinatorial configurations, all geometric as given in Table \ref{tab:93.ii.EQNS}: one with one real solution, one with irreducible moduli space by complex conjugation, and two potential Zariski pairs.
\end{lemma}

\begin{proof}
We begin by considering the combinatorial configuration without worrying about its geometric realization as in Figure \ref{fig:932}.  The nine doubles $A,\ldots,I$ are labelled; observe that these are arranged in a nine-gon.  The line $L_{10}$ must pass through three of these but cannot pass through any two that are adjacent.

Without loss of generality we may assume the line passes through $F=(a,0)$.  There are four possible cyclic partitions of the six remaining doubles $A$, $B$, $C$, $D$, $H$, $I$ into three nonempty sets:  the partition (1,1,4) giving {$D,F,H$}, the partition (1,2,3) giving {$D,F,I$}, the partition (1,3,2) giving {$D,F,A$}, and the partition (2,2,2) giving {$C,F,I$}.

Equations for a geometric realization of $(9_3)$.ii. given in Table \ref{tab:93EQNS} give coordinates for these doubles, and the equations given in Table \ref{tab:93.ii.EQNS} demonstrate that all four of these combinatorial arrangements are geometric.  The first gives complex conjugate solutions, the second is a potential Zariski pair, the third gives a single (real) solution, and the last gives two real (Galois conjugate) solutions and thus is also a potential Zariski pair.  Thus two of these give potential Zariski pairs that appear on the final list of Theorem \ref{thm:Zariskipairs}. 
\end{proof}

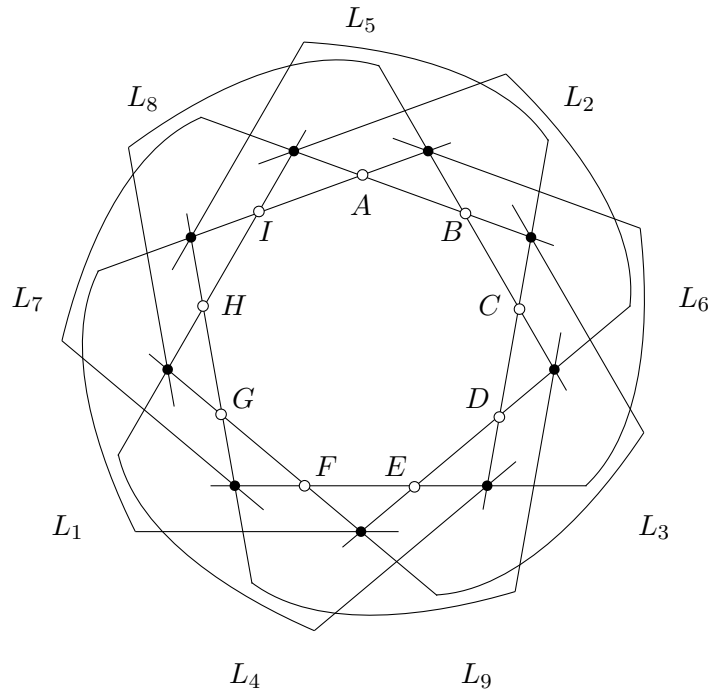
\begin{figure}[htbp]
\begin{center}
\begin{tikzpicture}

\foreach \x in {0,...,8}
	\draw[rotate around={(360*\x/9):(0,0)}] (-2,-2) -- (3,-2);
\foreach \x in {0,...,8}
	\fill[rotate around={(360*\x/9):(0,0)},color=black] (0,-2.61) circle (2pt);
\foreach \x in {0,...,8}
	\fill[rotate around={(360*\x/9):(0,0)},color=white] (-.75,-2) circle (2pt);
\foreach \x in {0,...,8}
	\draw[rotate around={(360*\x/9):(0,0)}] (-.75,-2) circle (2pt);
\foreach \x in {0,...,8}
	\draw[rotate around={(360*\x/9):(0,0)}] (-3,-2.61) -- (.5,-2.61);
\foreach \x in {0,...,8}
	\draw[rotate around={(360*\x/9):(0,0)}] (-3,-2.61) .. controls (-3.5,-1.61) and (-4,-.14) .. (-3.49,.86);

	\draw (0,2) node[below] {$A$};
	\draw (1.5,1.65) node[below left] {$B$};
	\draw (-1.5,1.65) node[below right] {$I$};
	\draw (2,.4) node[left] {$C$};
	\draw (-2,.4) node[right] {$H$};
	\draw (1.85,-1.1) node[above left] {$D$};
	\draw (-1.85,-1.1) node[above right] {$G$};
	\draw (.75,-2) node[above left] {$E$};
	\draw (-.75,-2) node[above right] {$F$};

	\draw (0,4.5) node[below] {$L_5$};
	\draw (2.9,3.45) node[below] {$L_2$};
	\draw (-2.9,3.45) node[below] {$L_8$};
	\draw (4.43,.78) node[below] {$L_6$};
	\draw (-4.43,.78) node[below] {$L_7$};
	\draw (3.90,-2.25) node[below] {$L_3$};
	\draw (-3.90,-2.25) node[below] {$L_1$};
	\draw (1.54,-4.23) node[below] {$L_9$};
	\draw (-1.54,-4.23) node[below] {$L_4$};

\end{tikzpicture}
	\caption{A combinatorial arrangement for $(9_3)$.ii. showing its symmetry.}
	\label{fig:932}
\end{center}
\end{figure}

\begin{table}[htbp]\renewcommand{\arraystretch}{2.25}
\begin{tabular}{|c||c|c|}
\hline
 \multicolumn{3}{|c|}{Adding a tenth line to ($9_3$).ii.  }\\
\hline
\hline
$10^{th}$ line & Equation & satisfying  \\
\hline
\hline
{ DFH} & $y=\frac{1}{1-a}(x-a)$	& $a-(b^2-b+1)=0$, $b^2+1=0^\mathbb{C}$ \\
\hline
\hline
{ DFI} & $y=\frac{1}{1-a}(x-a)$	& $a-(b^2-b+1)=0$, $b^3-2b^2+b-1=0^\mathcal{Z}$ \\
\hline
\hline
{ DFA} & $y=\frac{1}{1-a}(x-a)$	& $a-(b^2-b+1)=0$, $b-2=0$ \\
\hline
\hline
{ CFI} & $y=\frac{-b}{a}(x-a)$	& $a-(b^2-b+1)=0$, $b^2-b-1=0^\mathcal{Z}$ \\
\hline
\end{tabular}
\caption{Equations for (geometric) arrangements from ($9_3$).ii.}
\label{tab:93.ii.EQNS}
\end{table}

\begin{lemma}
\label{lem:12Reduction-c}
For the reduction of twelve triples giving case $(9_3)$.iii., we have five combinatorial configurations, three of which are geometric as given in Table \ref{tab:93.iii.EQNS}: one with irreducible moduli space by complex conjugation and two with two real solutions, giving two potential Zariski pairs.
\end{lemma}

\begin{proof}
We begin by considering the combinatorial configuration without worrying about its geometric realization as in Figure \ref{fig:933}.  The nine doubles $A,\ldots,I$ are labelled; observe that these are arranged in a hexagon $A,B,C,D,E,F$ and a triangle $G,H,I$.  The line $L_{10}$ must pass through three of these points, and so it can pass through at most one point on the triangle.

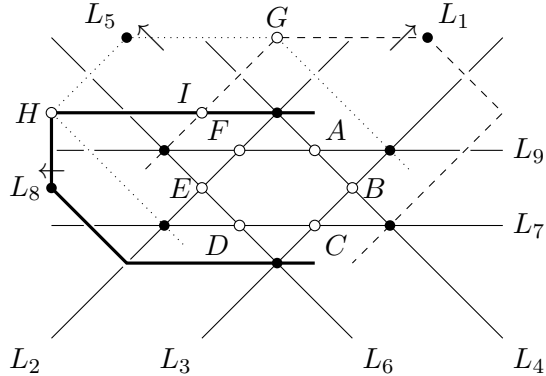
\begin{figure}[htbp]
\begin{center}
\begin{tikzpicture}
	\draw[-] (-1,.5) -- +(6,0) node[right] {$L_7$};
	\draw[-] (-1,1.5) -- +(6,0) node[right] {$L_9$};


	\draw[-] (3,3) -- (-1,-1) node[below left] {$L_2$};
	\draw[-] (1,3) -- (5,-1) node[below right] {$L_4$};
	\draw[-] (-1,3) -- (3,-1) node[below right] {$L_6$};
	\draw[-] (5,3) -- (1,-1) node[below left] {$L_3$};


	\fill[color=white] (2.5,1.5) circle (2pt);
	\draw (2.5,1.5) circle (2pt) node[above right] {$A$};
	\fill[color=white] (3,1) circle (2pt);
	\draw (3,1) circle (2pt) node[right] {$B$};
	\fill[color=white] (2.5,.5) circle (2pt);
	\draw (2.5,.5) circle (2pt) node[below right] {$C$};
	\fill[color=white] (1.5,1.5) circle (2pt);
	\draw (1.5,1.5) circle (2pt) node[above left] {$F$};
	\fill[color=white] (1,1) circle (2pt);
	\draw (1,1) circle (2pt) node[left] {$E$};
	\fill[color=white] (1.5,.5) circle (2pt);
	\draw (1.5,.5) circle (2pt) node[below left] {$D$};

\foreach \x/\y in {2/2,2/0,3.5/1.5,3.5/.5,.5/1.5,.5/.5}
	\fill[color=black] (\x,\y) circle (2pt);


	\fill[color=black] (0,3) circle (2pt) node[right] {$\nwarrow$} node[above left] {$L_5$}; 
	\fill[color=black] (4,3) circle (2pt) node[left] {$\nearrow$} node[above right] {$L_1$}; 
	\fill[color=black] (-1,1) circle (2pt) node[above] {$\leftarrow$} node[left] {$L_8$}; 

	\fill[color=white] (-.5,2.5) circle (2pt);
	\fill[color=white] (0,2) circle (2pt);
	\fill[color=white] (1.5,2.5) circle (2pt);
	\fill[color=white] (1,3) circle (2pt);
	\fill[color=white] (2.5,2.5) circle (2pt);
	\fill[color=white] (3,3) circle (2pt);
	\fill[color=white] (4.5,2.5) circle (2pt);

	\fill[color=white] (-1,1.5) circle (2pt);
	\fill[color=white] (-.5,1.5) circle (2pt);
	\fill[color=white] (4.5,1.5) circle (2pt);

	\fill[color=white] (-.5,.5) circle (2pt);
	\fill[color=white] (0,0) circle (2pt);

\draw[very thick] (2.5,2) -- (2,2) -- (0,2) -- (-1,2)-- (-1,1) -- (0,0) -- (2,0) -- (2.5,0);
\draw[dotted] (3.75,1.25) -- (3.5,1.5) -- (2,3) -- (0,3) -- (-1,2) -- (.5,.5) -- (.75,.25);
\draw[dashed] (.25,1.25) -- (.5,1.5) -- (2,3) -- (4,3) -- (5,2) -- (3.5,.5) -- (3,0);

	\fill[color=white] (1,2) circle (2pt);
	\draw (1,2) circle (2pt) node[above left] {$I$};
	\fill[color=white] (2,3) circle (2pt);
	\draw (2,3) circle (2pt) node[above] {$G$};
	\fill[color=white] (-1,2) circle (2pt);
	\draw (-1,2) circle (2pt) node[left] {$H$};

\end{tikzpicture}
	\caption{A combinatorial arrangement for $(9_3)$.iii. showing its symmetry.}
	\label{fig:933}
\end{center}
\end{figure}

\begin{table}[htbp]\renewcommand{\arraystretch}{2.25}
\begin{tabular}{|c||c|c|}
\hline
 \multicolumn{3}{|c|}{Adding a tenth line to ($9_3$).iii. }\\
\hline
\hline
$10^{th}$ line & Equation & satisfying  \\
\hline
\hline
BDF & $y=\frac{a-b}{a}x+bz$	& $a-b-1=0$, $a^2-a-1=0^\mathcal{Z}$ \\
\hline
\hline
ACG & $y=\frac{1}{1-a}(x-bz)+bz$	& $a-b-1=0$, $a^2-a-1=0^\mathcal{Z}$ \\
\hline
\hline
AEG & $y=\frac{a-1-ab}{a(a-1)}(x-az)+z$	& $a-b-1=0$, $b^2+b+1=0^\mathbb{C}$ \\
\hline
\hline
ADG & $y=\frac{a-b-1}{a-1}(x-az)+az$	& $a-b-1=0$, $a-1=0$, a contradiction \\
\hline
\hline
BEG & $y=\frac{1-b}{a}x+bz$	& $a-b-1=0$, $b^2+1=0$, a contradiction \\
    & & (see Remark \ref{rem:BEG}) \\
\hline
\end{tabular}
\caption{Equations for arrangements from ($9_3$).iii.}
\label{tab:93.iii.EQNS}
\end{table}

If the line passes through none of the points on the triangle, this gives just one choice up to symmetry:  $B,D,F$.  If the line passes through one point on the triangle {(say G)} and two on the hexagon, these two on the hexagon are either one apart, in which case there are two choices up to symmetry, $AC$(=$DF$) or $AE$($=FB=DB=CE$), or two apart, in which case there are two choices up to symmetry, $BE$ or $AD$(=$FC$).

Equations for a geometric realization of $(9_3)$.iii. given in Table \ref{tab:93EQNS} give coordinates for these doubles, and the equations given in Table \ref{tab:93.iii.EQNS} demonstrate that only {three} of these five combinatorial arrangements are geometric: {one} with {an} irreducible moduli space by complex conjugation and two with two real solutions, which give potential Zariski pairs.
\end{proof}



\begin{remark}
\label{rem:BEG}
Observe that geometrically the line BEG must pass through the triple $L_7\cap L_8\cap L_9$, making it a quadruple.  This arrangement already appears in \cite[Proposition 5.1]{Fei:10}, as it has ten lines and a quadruple.  However in the non-geometric setting we need not worry about this quadruple.
\end{remark}

\subsection{Remaining generic non-reduction subarrangements}  
\label{subsec:12nonred}  
Lastly we consider the remaining cases when the four initial lines form a generic subarrangement but there does not exist a line of it whose three doubles all fail to be triples in $\mathcal{A}$, the original arrangement.

We proceed according to how many of the six doubles are in fact triples in $\mathcal{A}$.

Suppose all six doubles are triples in $\mathcal{A}$.  Then the remaining six lines must form a subarrangement with exactly six triples, contradicting Lemma \ref{lem:6lines4pts}.

Suppose five of the six doubles are triples in $\mathcal{A}$; then two additional triples must lie on this subarrangement.  However the remaining six lines must form another subarrangement with exactly five triples, contradicting Lemma \ref{lem:6lines4pts}.

Suppose four of the six doubles are triples in $\mathcal{A}$; then four additional triples must lie on this subarrangement.  The remaining six lines must form another subarrangement with exactly four triples.  By Lemma \ref{lem:6lines4triples} there is a unique subarrangement $\mathcal{C}$ satisfying this.  However this subarrangement $\mathcal{C}$ has only three double points, fewer than the four needed to produce triples on the original subarrangement, a contradiction.

Suppose three of the six doubles are triples in $\mathcal{A}$.  Some cases already accounted for are those in which the intersections of a
given line with the other three lines are not triple points; these were
handled in Subsection \ref{subsec:12red}.  Those not accounted for are:
\begin{itemize}
	\item[(a)] those in which these three triples are colinear and
	\item[(b)] those in which these three triples are not colinear but do not form a triangle.
\end{itemize}


Here six additional triples must lie on this subarrangement.  Then the remaining six lines must form another subarrangement with exactly three triples.  By Lemma \ref{lem:6lines3triples} there is a unique subarrangement $\mathcal{B}$ satisfying this.  

\begin{lemma}[12.B.3.]
\label{lem:12non3}
For the non-reduction of twelve triples where three of the six doubles are triples, there are four combinatorial configurations, three of which are geometric configurations.  Three of these give potential Zariski pair.   
  Configuration tables are given in Table \ref{tab:12non3} and equations are given in Table \ref{tab:12.B.3.EQNS1}.
\end{lemma}

\begin{proof}
Let $e_1,e_2,e_3$ be the three triples coming from doubles as mentioned above.  The two remaining cases not handled by the reduction above are (a) when these three points are colinear and (b) when they are not but do not form a triangle.  In both we may assume $e_1$ is the intersection $L_1\cap L_2$ and $e_2$ is the intersection $L_1\cap L_3$.  In (a) we take $e_3$ to be the intersection $L_1\cap L_{10}$, and in (b) we take $e_3$ to be the intersection $L_2\cap L_{10}$. 

By re-ordering the remaining triple points, we may say that in both the line $L_{2}$ contains $e_5$, the line $L_{3}$ contains $e_6$ and $e_7$, and the line $L_{10}$ contains $e_8$ and $e_9$ as in Table \ref{tab:12non3}.  Note that the element $e_4$ is contained in line $L_2$ for (a) but line $L_1$ for (b).

Note that there are now three remaining triple points $e_{10},e_{11},e_{12}$ unaccounted for in the remaining six lines.
  By Lemma \ref{lem:6lines3triples}, there is a unique subarrangement $\mathcal{B}$ of six lines with these three triples. 
  We take the lines $L_4,L_5,L_6$ to be those with exactly two of these three triples as in Table \ref{tab:12non3}.  



On the subarrangement $\mathcal{B}$ there are six doubles.  Each of the lines $L_7,L_8,L_9$ contains three of these, so that all four triples are accounted for by multiple points of $\mathcal{B}$.  However, each of the lines $L_4,L_5,L_6$ contains just one of these doubles, and so the three triples $e_1,e_2,e_3$ from the initial subarrangement must appear on these lines.  
  Without loss of generality we may assume $e_1$ is on $L_4$, $e_2$ is on $L_5$, and $e_3$ is on $L_6$.

We now consider the three remaining triples on the lines $L_4,L_5,L_6$.

In (a) there is only one choice up to the symmetries $(e_4 e_5)$, $(e_6 e_7)$, and $(e_8 e_9)$, as well as the symmetries between the lines $L_2,L_3,L_{10}$.  This gives only one arrangement, the arrangement 12.B.3.a.i. in Table \ref{tab:12non3}.


A similar arrangement holds for (b) with no overlapping of elements $e_4$ and $e_5$ despite them not being colinear anymore, giving the arrangement 12.B.3.b.i. in Table \ref{tab:12non3}.

Finally for (b) we take the triples $e_4$ and $e_5$ to be colinear on the line $L_7$, 
and we may assume up to symmetry that the final triple on the line is $e_6$.  Then up to symmetry of the triples $(e_8 e_9)$ and $(e_{11} e_{12})$, this forces the positions of the triples $e_8$ and $e_9$ on the remaining lines.  It must be the case that the line $L_6$ also contains the triple $e_6$, and this gives two choices for the positions of the triples $e_4$ and $e_5$.  Up to symmetry this gives two arrangements, the arrangements { 12.B.3.b.ii.} and { 12.B.3.b.iii.}, which differ only by the placement of $e_4,e_5$ on lines $L_8,L_9$, in Table \ref{tab:12non3}.
\end{proof}

\begin{table}[htbp]
{ \begin{tabular}{cc}

\begin{tabular}{|c|ccc|ccc|ccc|}
\hline
$L_1$ &	$L_2$ &	$L_3$ & $L_{10}$	& $L_4$ & $L_5$ &	$L_6$ &	$L_7$ & $L_8$ &	$L_9$\\
\hline
$e_1$ & $e_1$ & $e_2$ & $e_3$	&	$e_1$ 	 & $e_2$ 		& $e_3$ 	 & $e_{10}$	&	$e_{11}$ & $e_{12}$\\
$e_2$ & $e_4$ & $e_6$ & $e_8$	&	$e_{10}$ & $e_{10}$ & $e_{11}$ & $e_{4}$	&	$e_{5}$ & $e_{4}$\\
$e_3$ & $e_5$ & $e_7$ & $e_9$	&	$e_{11}$ & $e_{12}$ & $e_{12}$ & $e_{7}$	&	$e_{6}$ & $e_{6}$\\
		  & 		  & 		  & 			&	$e_9$ 	 & $e_5$ 		& $e_7$ 	 & $e_{8}$	&	$e_{8}$ & $e_{9}$\\
\hline
\end{tabular}

 &

\begin{tabular}{|cc|cc|ccc|ccc|}
\hline
$L_1$ &	$L_2$ &	$L_3$ & $L_{10}$	& $L_4$ & $L_5$ &	$L_6$ &	$L_7$ & $L_8$ &	$L_9$\\
\hline
$e_1$ & $e_1$ & $e_2$ & $e_3$	&	$e_1$ 	 & $e_2$ 		& $e_3$ 	 & $e_{10}$	&	$e_{11}$ & $e_{12}$\\
$e_2$ & $e_3$ & $e_6$ & $e_8$	&	$e_{10}$ & $e_{10}$ & $e_{11}$ & $e_{4}$	&	$e_{5}$ & $e_{4}$\\
$e_4$ & $e_5$ & $e_7$ & $e_9$	&	$e_{11}$ & $e_{12}$ & $e_{12}$ & $e_{7}$	&	$e_{6}$ & $e_{6}$\\
		  & 		  & 		  & 			&	$e_9$ 	 & $e_5$ 		& $e_7$ 	 & $e_{8}$	&	$e_{8}$ & $e_{9}$\\
\hline
\end{tabular}
\\

 &  \\

The ($\mathcal{Z}$) arrangement 12.B.3.a.i. & The non-geometric arrangement 12.B.3.b.i.\\

 &  \\

\begin{tabular}{|cc|cc|ccc|ccc|}
\hline
$e_1$ & $e_1$ & $e_2$ & $e_3$	&	$e_1$ 	 & $e_2$ 		& $e_3$ 	 & $e_{10}$	&	$e_{11}$ & $e_{12}$\\
$e_2$ & $e_3$ & $e_6$ & $e_8$	&	$e_{10}$ & $e_{10}$ & $e_{11}$ & $e_{4}$	&	$e_{7}$ & $e_{7}$\\
$e_4$ & $e_5$ & $e_7$ & $e_9$	&	$e_{11}$ & $e_{12}$ & $e_{12}$ & $e_{5}$	&	$e_{9}$ & $e_{8}$\\
		  & 		  & 		  & 			&	$e_8$ 	 & $e_9$ 		& $e_6$ 	 & $e_{6}$	&	$e_{4}$ & $e_{5}$\\
\hline
\end{tabular}

&

\begin{tabular}{|cc|cc|ccc|ccc|}
\hline
$e_1$ & $e_1$ & $e_2$ & $e_3$	&	$e_1$ 	 & $e_2$ 		& $e_3$ 	 & $e_{10}$	&	$e_{11}$ & $e_{12}$\\
$e_2$ & $e_3$ & $e_6$ & $e_8$	&	$e_{10}$ & $e_{10}$ & $e_{11}$ & $e_{4}$	&	$e_{7}$ & $e_{7}$\\
$e_4$ & $e_5$ & $e_7$ & $e_9$	&	$e_{11}$ & $e_{12}$ & $e_{12}$ & $e_{5}$	&	$e_{9}$ & $e_{8}$\\
		  & 		  & 		  & 			&	$e_8$ 	 & $e_9$ 		& $e_6$ 	 & $e_{6}$	&	$e_{5}$ & $e_{4}$\\
\hline
\end{tabular}

\\

 &  \\

The ($\mathcal{Z}$) arrangement {12.B.3.b.ii.} & The ($\mathcal{Z}$) arrangement {12.B.3.b.iii.}\\

 &  \\

\end{tabular}
}
\caption{Arrangements with twelve triples where three of six doubles are triples, not arising from the reduction.}
\label{tab:12non3}
\end{table}


\begin{table}[htbp]\renewcommand{\arraystretch}{2.25}
\begin{tabular}{|c||c|c|c|c|c|c|}
\hline
 \multicolumn{7}{|c|}{12.B.3. } \\
\hline
\hline
Arr. & $y=0,z,bz$ & $x=0,z,az$ & $y=Ax$ &	$y=Bx+z$ &	$y=C(x-z)$ & $y=D(x-z)+z$ \\
\hline
\hline
a.i. & $L_7,L_5,L_4$	& $L_8,L_9,L_3$	& $L_{10}$:  $b$ & $L_2$:  $-1$	&	{} &	$L_6$:  $\frac{-b}{a}$ \\
\hline
 & \multicolumn{6}{|c|}{with $L_1: y=\frac{1-b}{a+b-1}(x-az)+z$ and satisfying $a(b-1)-b=0$ and $b^3-3b^2+2b-1=0^\mathcal{Z}$}\\
\hline
\hline
b.i. & $L_7,L_5,L_4$	& $L_8,L_9,L_3$	& $L_{10}$:  $b$ & $L_2$: $\frac{b-1}{ab-b+1}$ &	$L_1$: $\frac{1}{a-1}$ &	$L_6$:  $-\frac{b}{a}$ \\
\hline
 & \multicolumn{6}{|c|}{and satisfying $a(b-1)-b=0$ and $b-1=0$, a contradiction }\\
\hline
\hline
{b.ii.} & $L_7,L_5,L_4$	& $L_8,L_9,L_3$	& $L_1$:  $\frac{1}{a}$ & $L_{10}$: $b-1$ &	$L_2$: $\frac{b}{ab-1}$ &	$L_6$:  $-\frac{b}{a}$ \\
\hline
 & \multicolumn{6}{|c|}{and satisfying $a(b-1)-b=0$ and $b^3+b^2-b+1=0^\mathcal{Z}$ }\\
\hline
\hline
{b.iii.} & $L_7,L_5,L_4$	& $L_8,L_9,L_3$	& $L_2$:  $\frac{b}{a+1}$ & $L_{10}$: $b-1$ &	$L_1$: $\frac{1}{a-1}$ &	$L_6$: $\frac{-b}{a}$ \\
\hline
 & \multicolumn{6}{|c|}{and satisfying $b(a-1)-a=0$ and $2a^2-1=0^\mathcal{Z}$ } \\
\hline
\end{tabular}
\caption{Equations for (geometric) arrangements 12.B.3.}
\label{tab:12.B.3.EQNS1}
\end{table}

%
%
%
%


Suppose two of the six doubles are triples in $\mathcal{A}$.  Some of these cases were handled in Subsection \ref{subsec:12red}; the ones that were not have these two doubles not colinear.

\begin{lemma}[12.B.2.]
\label{lem:12non2}
For the non-reduction of twelve triples where two of the six doubles are triples, there are four combinatorial configurations, two of which are geometric, and one of these represents a potential Zariski pair.  
  Configuration tables are given in Table \ref{tab:12non2} and equations are given in Table \ref{tab:12.B.2.EQNS1}.
\end{lemma}

\begin{proof}
Let $e_1$ be the intersection $L_1\cap L_2$ with $e_2,e_3$ on $L_1$ and $e_4,e_5$ on $L_2$.  Similarly, let $e_6$ be the intersection $L_3\cap L_{10}$ with $e_7,e_8$ on $L_3$ and $e_9,e_{10}$ on $L_{10}$.  Then the remaining triples $e_{11}$ and $e_{12}$ must lie on the six remaining lines.

First suppose that these triples $e_{11}$ and $e_{12}$ are not colinear.  Then we may assume that the first three remaining lines $L_4,L_5,L_6$ contain the triple $e_{11}$ and that the last three remaining lines $L_7,L_8,L_9$ contain the triple $e_{12}$.  Each of these remaining lines contains three additional triples.

None of these six remaining lines may contain more than two of the triples $e_1,e_2,e_3,e_4,e_5$ or more than two of the triples $e_6,e_7,e_8,e_9,e_{10}$, and so each line must contain one triple from the first set and two triples from the second set or vice versa.


\begin{claim*}
\label{claim:12B2i}
Up to symmetry this gives only one arrangement, the arrangement 12.B.2.i. in Table \ref{tab:12non2}.
\end{claim*}

\begin{proof}
We may assume that the line $L_9$ does not contain the triples $e_1,e_2,e_3$ by the symmetry $(e_{11} e_{12})$ and by re-ordering the lines $L_4$ through $L_9$.  Furthermore we may assume the line $L_4$ contains the triple $e_1$ and that $e_2=L_5\cap L_7$ and $e_3=L_6\cap L_8$.

By the assertion just above the claim, the line $L_9$ must contain one of the triples $e_4$ and $e_5$, say $e_4$.  Then since this line also contains the triple $e_{12}$, the other line that contains the triple $e_4$ must be $L_5$ or $L_6$.  By the symmetry $(e_2 e_3)$ we may assume it is $L_5$.  This leaves three lines $L_6,L_7,L_8$, two of which must contain the triple $e_5$.  However we already have $e_3=L_6\cap L_8$ and $e_{12}=L_7\cap L_8$, and so the lines $L_6$ and $L_7$ must contain the triple $e_5$.

This gives three lines $L_4,L_8,L_9$ that contain single triples and three lines $L_5,L_6,L_7$ that contain pairs of triples.

A similar argument with the triples $e_6,e_7,e_8,e_9,e_{10}$ gives three lines containing the pair $e_7,e_{10}$, the pair $e_7,e_9$, and the pair $e_8,e_{10}$ and three lines containing the single triples $e_6,e_8,e_9$.  Since these pairs overlap in both $e_7$ and $e_{10}$ and since $e_{12}=L_8\cap L_9$, the line $L_4$ must contain $e_7$ and $e_{10}$.

We may assume up to the symmetry $(e_7 e_{10})(e_8 e_9)$ that the line $L_8$ contains the triple $e_7$ (and therefore $e_9$) and the line $L_9$ contains the triple $e_{10}$ (and therefore $e_8$).  The line $L_7$ must contain the triple $e_6$ since $e_{12}=L_8\cap L_9$, and finally the line $L_5$ must contain $e_9$ since $e_3=L_6\cap L_8$ and the line $L_6$ must contain $e_8$ since $e_4=L_5\cap L_9$.
\end{proof}

Now supposing that there is a line $L_4$ containing both $e_{11}$ and $e_{12}$, then there must be some other of the six lines $L_5$ not containing either.  Since any line can contain at most two elements from each the first two multisets $\{e_1,e_2,e_2,e_3,e_3,e_4,e_4,e_5,e_5\}$ and $\{e_6,e_7,e_7,e_8,e_8,e_9,e_9,e_{10},e_{10}\}$, and since this last line $L_5$ must contain four elements, we may assume that $e_2,e_4,e_7,e_9$ are on $L_5$.

None of the triples $e_3,e_5,e_8,e_{10}$ can be on the line $L_4$ because all the remaining lines contain one of the triples $e_{11}$ and $e_{12}$.  { At most one of the triples $e_2,e_4,e_7,e_9$ can be on this line because they are colinear on the line $L_5$.  This gives two possibilities for the remaining elements on the line $L_4$:  either both $e_1,e_6$ or just one of these, say $e_1$, with one of the other set, say $e_7$, up to the appropriate symmetry.

First suppose that both the triples $e_1,e_6$ are on the line $L_4$.  Again, since the triples $e_2,e_4,e_7,e_9$ are already colinear on the line $L_5$, they must appear on the remaining four lines $L_6,L_7,L_8,L_9$.  Up to symmetry of the triples ($e_{11} e_{12}$) we have only two possibilities:  when both $e_2,e_4$ are colinear with the same triple of $e_{11},e_{12}$ or when they are not.  Each of these gives just a single arrangement, giving two arrangements  in total:  the arrangements 12.B.2.ii. and 12.B.2.iii. in Table \ref{tab:12non2}.

If on the other hand the triples $e_1,e_7$ are on the line $L_4$, then the triple $e_6$ must appear on one of the remaining four lines, and we may assume it is $L_6$ with the triple $e_{11}$ up to symmetry.  These four lines must contain exactly two copies of each of the triples $e_3,e_5,e_8,e_{10},e_{11},e_{12}$, and since this accounts for all $\binom{4}{2}=6$ possibilities, the line $L_4$ must contain three of these including $e_{11}$.  Thus the remaining two triples on this line must be $e_3$ and $e_5$ since the triples $e_8$ and $e_{10}$ are already colinear with $e_6$.  Up to the symmetry ($e_2 e_4$)($e_3 e_5$) and ($L_1 L_2)$ this gives just one arrangement, the arrangement 12.B.2.iv. in Table \ref{tab:12non2}.
}

Two of these four arrangements are non-geometric.  The equations given in Table \ref{tab:12.B.2.EQNS1} demonstrate that one of the two remaining geometric arrangements has an irreducible moduli space while the other is a potential Zariski pair, having three roots and appearing on the final list of Theorem \ref{thm:Zariskipairs}.
\end{proof}

\begin{table}[htbp]
{\begin{tabular}{cc}

\begin{tabular}{|cc|cc|ccc|ccc|}
\hline
$L_1$ &	$L_2$ &	$L_3$ & $L_{10}$	& $L_4$ &	$L_5$ &	$L_6$ & $L_7$ &	$L_8$ & $L_9$\\
\hline
$e_1$ & $e_1$ & $e_6$ & $e_6$	&	    $e_{11}$ & $e_{11}$ & $e_{11}$ & $e_{12}$	&	$e_{12}$ & $e_{12}$\\
$e_2$ & $e_4$ & $e_7$ & $e_9$	&	    $e_1$  	 & $e_2$  	& $e_3$  	 & $e_{2}$	&	$e_{3}$ & $e_{4}$\\
$e_3$ & $e_5$ & $e_8$ & $e_{10}$ &	$e_7$ 	 & $e_4$    & $e_5$    & $e_{5}$	&	$e_{7}$ & $e_{10}$\\
		  & 		  & 		  & 			&	    $e_{10}$ & $e_9$ 	  & $e_8$ 	 & $e_{6}$	&	$e_{9}$ & $e_{8}$\\
\hline
\end{tabular}

&

\begin{tabular}{|cc|cc|c|c|cc|cc|}
\hline
$L_1$ &	$L_2$ &	$L_3$ & $L_{10}$	& $L_4$ &	$L_5$ &	$L_6$ & $L_7$ &	$L_8$ & $L_9$\\
\hline
$e_1$ & $e_1$ & $e_6$ & $e_6$	&	    $e_{11}$ & $e_2$ 	 & $e_{11}$ & $e_{11}$	&	$e_{12}$ & $e_{12}$\\
$e_2$ & $e_4$ & $e_7$ & $e_9$	&	    $e_{12}$ & $e_4$ 	 & $e_{3}$  & $e_{5}$		&	$e_{3}$  & $e_{5}$\\
$e_3$ & $e_5$ & $e_8$ & $e_{10}$ &	$e_{1}$ & $e_7$    & $e_8$    & $e_{10}$	&	$e_{10}$ & $e_{8}$\\
		  & 		  & 		  & 			&	    $e_{6}$ & $e_9$ 	 & $e_4$ 	  & $e_{2}$		&	$e_{7}$  & $e_{9}$\\
\hline
\end{tabular}

\\

 &  \\

The non-geometric arrangement 12.B.2.i.  & The arrangement 12.B.2.ii. \\
 &  \\

\begin{tabular}{|cc|cc|c|c|cc|cc|}
\hline
$e_1$ & $e_1$ & $e_6$ & $e_6$	&	    $e_{11}$ & $e_{2}$ & $e_{11}$ & $e_{11}$	&	$e_{12}$ & $e_{12}$\\
$e_2$ & $e_4$ & $e_7$ & $e_9$	&	    $e_{12}$ & $e_{4}$ & $e_{3}$  & $e_{8}$		&	$e_{3}$  & $e_{5}$\\
$e_3$ & $e_5$ & $e_8$ & $e_{10}$ &	$e_{1}$ & $e_7$    & $e_5$    & $e_{10}$	&	$e_{10}$ & $e_{8}$\\
		  & 		  & 		  & 			&	    $e_{6}$ & $e_9$ 	 & $e_7$ 	  & $e_{2}$		&	$e_{4}$  & $e_{9}$\\
\hline
\end{tabular}

&

\begin{tabular}{|cc|cc|c|c|cc|cc|}
\hline
$e_1$ & $e_1$ & $e_6$ & $e_6$	&	    $e_{11}$ & $e_{2}$ & $e_{11}$ & $e_{11}$	&	$e_{12}$ & $e_{12}$\\
$e_2$ & $e_4$ & $e_7$ & $e_9$	&	    $e_{12}$ & $e_{4}$ & $e_{3}$  & $e_{8}$		&	$e_{3}$  & $e_{5}$\\
$e_3$ & $e_5$ & $e_8$ & $e_{10}$ &	$e_{1}$ & $e_7$    & $e_5$    & $e_{10}$	&	$e_{10}$ & $e_{8}$\\
		  & 		  & 		  & 			&	    $e_{7}$ & $e_9$ 	 & $e_6$ 	  & $e_{2}$		&	$e_{4}$  & $e_{9}$\\
\hline
\end{tabular}

\\

 &  \\

The non-geometric arrangement 12.B.2.iii. & The {($\mathcal{Z}$)} arrangement 12.B.2.iv. \\
 &  
\end{tabular}
}
\caption{Arrangements with twelve triples where two of six doubles are triples, not arising from the reduction.}
\label{tab:12non2}
\end{table}

\begin{table}[htbp]\renewcommand{\arraystretch}{2.25}
\begin{tabular}{|c||c|c|c|c|c|c|}
\hline
 \multicolumn{7}{|c|}{12.B.2.} \\
\hline
\hline
Arr. & $y=0,z,bz$ & $x=0,z,az$ & $y=Ax$ &	$y=Bx+z$ &	$y=C(x-z)$ & $y=D(x-z)+z$ \\
\hline
\hline
i. & $L_6,L_5,L_4$	& $L_8,L_9,L_7$	& $L_1$:  $\frac{1}{a}$ & $L_{10}$:  $b-1$	&	$L_3$: $-b$ &	$L_2$:  $\frac{1}{1-a}$ \\
\hline
 & \multicolumn{6}{|c|}{satisfying $2ab-a-b+1=0$ and $2ab-a-b=0$, a contradiction}\\
\hline
\hline
ii. & $L_4,L_8,L_9$	& $L_1,L_5,L_7$	& $L_2$:  $\frac{b}{a}$ & $L_6$:  $-\frac{1}{a}$	&	{} &	$L_3$:  $\frac{b-1}{a-ab-1}$ \\
\hline
 & \multicolumn{6}{|c|}{with $L_{10}: y=\frac{1-b}{a-1}(x-z)+bz$ and satisfying $a-(b+1)=0$ and $2b-1=0$}\\
\hline
\hline
iii. & $L_{10},L_5,L_9$	& $L_4,L_6,L_7$	& $L_3$:  $1$ & {}	&	{} &	{} \\
\hline
 & \multicolumn{6}{|c|}{ $L_8: y=-x+a$, $L_1: y=\frac{a}{1-a}(x-a)+1$, $L_2: y=\frac{1-a}{a}(x-1)+a$}\\
 & \multicolumn{6}{|c|}{ and satisfying $a^2+a-1=0$, a contradiction (see Remark \ref{rem:12.B.2.iii.}) }\\ 
\hline
\hline
iv. & $L_4,L_7,L_6$	& $L_5,L_9,L_{10}$	& $L_3$:  $1$ & $L_1$: $\frac{a-1}{a^2-a+1}$ &	$L_8$: $\frac{1}{a-1}$ &	{} \\
\hline
 & \multicolumn{6}{|c|}{with $L_2: y=\frac{a^2-a+1}{a-1}(x-z)+az$ }\\
 & \multicolumn{6}{|c|}{and satisfying $a-b=0$ and $a^3-2a^2+3a-1=0^\mathcal{Z}$} \\
\hline
\end{tabular}
\caption{Equations for arrangements 12.B.2.}
\label{tab:12.B.2.EQNS1}
\end{table}

\begin{remark}
\label{rem:12.B.2.iii.}
A geometric restriction to 12.B.2.iii. forces a quintuple point to occur.  However in the non-geometric setting we need not worry about this quintuple.
\end{remark}

Finally suppose that only one or none of the six doubles are triples in $\mathcal{A}$; then all cases were handled already in {the reduction argument of} Subsection \ref{subsec:12red}.

\bigskip

This completes the proof of Theorem \ref{thm:12} for arrangements of ten lines with twelve triples.
\end{proof}

\section{Arrangements of ten lines with eleven triples}
\label{sec:11}

\begin{theorem}
\label{thm:11}
There are thirty-eight combinatorial configurations of ten lines and eleven triples, given as configuration tables in Tables \ref{tab:11central}, \ref{tab:11generic2}, \ref{tab:11generic3central}, \ref{tab:11generic3generic2}, \ref{tab:11generic3generic1a} and \ref{tab:11generic3generic1b}.  All but two of these are geometric, as shown in Tables \ref{tab:11.A.EQNS1}, \ref{tab:11.A.EQNS2}, \ref{tab:11.B.2.EQNS1}, \ref{tab:11.B.3.a.EQNS1}, \ref{tab:11.B.3.b.2.EQNS1}, \ref{tab:11generic3generic1a}, and \ref{tab:11generic3generic1b}, but there are no potential Zariski pairs.
\end{theorem}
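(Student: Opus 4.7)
The plan is to mirror the casework structure used in Sections \ref{sec:13} and \ref{sec:12}, but now beginning from the subarrangement of the three "heavy" lines. By Lemma \ref{lemma:basic} and the $n_3=11$ column of Table \ref{tab:triplenumbers}, an arrangement $\mathcal{A}$ of ten lines with eleven triples has exactly $\ell_3=7$ lines carrying three triples each and $\ell_4=3$ lines carrying four triples each. First I would fix attention on the three $\ell_4$-lines, call them $L_1,L_2,L_3$. Either they are concurrent (case 11.A) or they form a generic triangle (case 11.B); the two cases are handled separately and each produces its own combinatorial enumeration.

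In case 11.A the common point is itself a triple $e_1$, and each $L_i$ carries three additional triples among the remaining seven lines. I would enumerate, up to the $S_3$-symmetry permuting $L_1,L_2,L_3$, how these nine further incidences distribute on the seven remaining lines, using Fact \ref{fact:3col6lines}, Fact \ref{fact:4col8lines}, and Lemma \ref{lem:6lines4pts} to eliminate spurious combinatorial types. This should produce the ten configurations listed in Table \ref{tab:11central}. In case 11.B I would follow the pattern of Section \ref{sec:12}, splitting according to how many of the three pairwise intersections $L_i\cap L_j$ are triples of $\mathcal{A}$: the cases 11.B.3, 11.B.2, 11.B.1, 11.B.0. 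The cases with zero or one promoted doubles reduce, after deletion of a distinguished line and application of Proposition \ref{prop:9-3} or Proposition \ref{prop:comb9lines10triples}, to configurations already classified, and I expect them to contribute no new arrangements; this is what allows the surviving subcases to be labelled 11.B.2 and 11.B.3 only.

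Within 11.B.3, three of the eleven triples sit on the initial triangle, leaving eight triples distributed on the remaining seven lines and the initial three. Among those, I would identify a secondary subarrangement of three lines and again split into central (11.B.3.a) and generic (11.B.3.b) cases, as indicated by Remark \ref{rem:centralgeneric}. The generic branch further bifurcates according to whether two or one of the doubles of the secondary triangle is promoted to a triple of $\mathcal{A}$ (cases 11.B.3.b.2 and 11.B.3.b.1), invoking Lemmas \ref{lem:6lines3triples} and \ref{lem:6lines4triples} to pin down the residual six-line subarrangements $\mathcal{B}$ and $\mathcal{C}$. At each stage I would list representatives up to the obvious symmetries and record the resulting configuration tables that appear in Tables \ref{tab:11central}--\ref{tab:11generic3generic1b}.

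For each combinatorial type, geometric realizability and irreducibility of $\mathcal{M}_\mathcal{A}$ are decided by running Algorithm \ref{algorithm}: the Grid Lemma \ref{grid} normalises six lines using two transverse pencils, each remaining line is determined by two of its triples, and the leftover triples produce defining equations in the two free parameters $a,b$. Non-realizability is detected when the system is inconsistent, and irreducibility is certified either by hand (as in Example \ref{11.B.3.a.-irr}) or by Mathematica's built-in test. I expect the main obstacle to be the thirteen-configuration block 11.B.3.b.1, which the paper itself flags as the most fragmented; here one must both carry out a delicate combinatorial enumeration and verify irreducibility of several bivariate polynomials, with the one non-realizable configuration emerging as a genuine algebraic obstruction rather than a purely combinatorial one. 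Once every defining polynomial is shown to be either inconsistent, a single point, or irreducible (over $\mathbb{C}$, up to complex conjugation), Proposition \ref{prop:blue} rules out Zariski pairs, yielding the claimed counts.
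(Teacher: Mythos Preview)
Your overall architecture matches the paper's: split on whether the three heavy lines $L_1,L_2,L_3$ are concurrent (11.A) or generic (11.B), and in 11.B.3 pass to a secondary triple of lines $L_4,L_5,L_6$ and split again. Two points deserve correction, however.

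First, the cases 11.B.0 and 11.B.1 are \emph{not} handled by deleting a line and invoking Proposition~\ref{prop:9-3} or Proposition~\ref{prop:comb9lines10triples}; there is no distinguished $\ell_3$-line whose removal lands you in a classified nine-line situation. The paper disposes of them by a direct count: if no pairwise intersection of $L_1,L_2,L_3$ is a triple then these three lines already carry $3\cdot4=12$ distinct triples, contradicting $n_3=11$; if exactly one is a triple, the three lines carry eleven triples, but then the fourth line through that triple must meet two more triples among the remaining seven lines while already passing through a triple on $L_1\cup L_2\cup L_3$, which is impossible. Your reduction plan would not produce these contradictions.

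Second, Lemmas~\ref{lem:6lines3triples} and~\ref{lem:6lines4triples} (the $\mathcal{B}$ and $\mathcal{C}$ subarrangements) play no role in Section~\ref{sec:11}; they are tools for the twelve-triple analysis. In 11.B.3 the secondary lines $L_4,L_5,L_6$ are not an arbitrary triple but are specifically the third lines through $e_1,e_2,e_3$, and the split into 11.B.3.b.2 versus 11.B.3.b.1 comes from the observation that at most two of the doubles of $L_4,L_5,L_6$ can be triples (nine triples are already on $L_1\cup L_2\cup L_3$) and at least one must be (Fact~\ref{fact:2trip5lines} applied to the last four lines). With these adjustments your plan coincides with the paper's proof.
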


\begin{proof}
Let $\mathcal{A}$ be arrangement of ten lines with eleven triples.  Here there are exactly three lines that have exactly four triple points on them; call these $L_1$, $L_2$, and $L_3$.  We consider the possible subarrangements of just these three lines: either they are central or generic.

\subsection{Central subarrangement}
\label{subsec:11central}

Suppose that the three lines $L_1,L_2,L_3$ form a central subarrangement, and let $e_{1}$ be the name of this common triple.

\begin{lemma}[11.A.]
\label{lem:11central}
For the central subarrangement of $L_1,L_2,L_3$ for eleven triples, there are ten 
 combinatorial configurations, all of which are geometric with irreducible moduli spaces.  Thus there are no potential Zariski pairs.  
  Configuration tables are given in Table \ref{tab:11central} and equations are given in Tables \ref{tab:11.A.EQNS1} and \ref{tab:11.A.EQNS2}.
\end{lemma}

\begin{proof}
These three lines $L_1,L_2,L_3$ contain an additional nine triples.  We may assume that the triples $e_1,e_2,e_3,e_4$ are on $L_1$, the triples $e_1,e_5,e_6,e_7$ are on $L_2$, and the triples $e_1,e_8,e_9,e_{10}$ are on $L_3$.  This leaves one additional triple $e_{11}$ not on these lines; we may assume it is on the lines $L_4,L_5,L_6$.  By Lemma \ref{grid} we may consider the first three lines as horizontal and the second three lines as vertical with the triples $e_1$ and $e_{11}$ at infinity.

The three horizontal lines contain the remaining nine triples $e_2,\ldots,e_{10}$, three on each line.  However the vertical lines only contain two additional triples each, and so only six of the nine points of the three-by-three grid from Lemma \ref{grid} can be triples.

Suppose by way of contradiction that four of these form a rectangle on the grid; by symmetry we may suppose that these lie in a close square on the lines $L_1\cup L_2$ with the triple $e_2,e_5$ on the line $L_4$ and the triples $e_3,e_6$ on the line $L_5$.  Then the line $L_3$ will have only one of the grid points as a triple.  However, this contradicts Fact \ref{fact:4col8lines} with $L_3$ containing $e_1,e_8,e_9,e_{10}$ needing eight additional lines amongst $L_1,L_2$ and $L_6,\ldots,L_{10}$.

Instead we must have, up to symmetry, the triples $e_2,e_5$ on the line $L_4$, the triples $e_3,e_8$ on the line $L_5$, and the triples $e_6,e_9$ on the line $L_6$.  Unaccounted for are the triples $e_4,e_7,e_{10}$.

If just three of the four remaining lines $L_7,\ldots,L_{10}$ contain these last three points, then up to symmetry there is just one arrangement:  the arrangement 11.A.i. in Table \ref{tab:11central}.

Otherwise these three points fall on all four remaining lines.  Suppose that two of these three points are not colinaer, say $e_4$ and $e_{10}$.  Then up to symmetry we have the triple $e_4$ on the line $L_7$, the triples $e_4,e_7$ on the line $L_8$, the triples $e_7,e_{10}$ on the line $L_9$, and the triple $e_{10}$ on $L_{10}$.  Supposing that the triples $e_3$ and $e_5$ are on the line $L_{10}$, there is just one arrangement:  the arrangement 11.A.ii. in Table \ref{tab:11central}.  Supposing that the triples $e_3$ and $e_6$ are on the line $L_{10}$, there are two arrangements:  the arrangements 11.A.iii. and 11.A.iv. in Table \ref{tab:11central}.  Supposing that the triple $e_3$ is not on the line $L_{10}$, there are two arrangements:  the arrangements 11.A.v. and 11.A.vi. in Table \ref{tab:11central}.

Suppose that one of these lines, say $L_7$, contains all three of these points.  Then up to symmetry we have the triple $e_4$ on the line $L_8$, the triple $e_7$ on the line $L_9$, and the triple $e_{10}$ on $L_{10}$.  Supposing that the triples $e_3$ and $e_6$ are on the line $L_{10}$, there are two arrangements:  the arrangements 11.A.vii. and 11.A.viii. in Table \ref{tab:11central}.  Supposing that the triples $e_3$ and $e_5$ are on the line $L_{10}$, there is just one arrangement:  the arrangement 11.A.ix. in Table \ref{tab:11central}.  Supposing that the triple $e_3$ is not on the line $L_{10}$, there is just one arrangement:  the arrangement 11.A.x. in Table \ref{tab:11central}.
\end{proof}

\begin{table}[htbp]
{\begin{tabular}{cc}

\begin{tabular}{|ccc|ccc|ccc|c|}
\hline
$L_1$ &	$L_2$ &	$L_3$ & $L_4$	& $L_5$ &	$L_6$ &	$L_7$ & $L_8$ &	$L_9$ & $L_{10}$\\
\hline
$e_1$ & $e_1$ & $e_1$ & $e_2$	&	$e_3$ & $e_6$ & $e_4$ & $e_4$	&	$e_7$ & $e_2$\\
$e_2$ & $e_5$ & $e_8$ & $e_5$	&	$e_8$ & $e_9$ & $e_7$ & $e_{10}$	&	$e_{10}$ & $e_6$\\
$e_3$ & $e_6$ & $e_9$ & $e_{11}$	&	$e_{11}$ & $e_{11}$ & $e_9$ & $e_5$	&	$e_3$ & $e_8$\\
$e_4$ & $e_7$ & $e_{10}$ & 	&	 &  &  & 	&	 & \\
\hline
\end{tabular}

&

\begin{tabular}{|ccc|ccc|cccc|}
\hline
$L_1$ &	$L_2$ &	$L_3$ & $L_4$	& $L_5$ &	$L_6$ &	$L_7$ & $L_8$ &	$L_9$ & $L_{10}$\\
\hline
$e_1$ & $e_1$ & $e_1$ & $e_2$	&	$e_3$ & $e_6$ & $e_4$ & $e_4$	&	$e_2$ & $e_3$\\
$e_2$ & $e_5$ & $e_8$ & $e_5$	&	$e_8$ & $e_9$ & $e_6$ & $e_7$	&	$e_7$ & $e_5$\\
$e_3$ & $e_6$ & $e_9$ & $e_{11}$	&	$e_{11}$ & $e_{11}$ & $e_8$ & $e_9$	&	$e_{10}$ & $e_{10}$\\
$e_4$ & $e_7$ & $e_{10}$ & 	&	 &  &  & 	&	 & \\
\hline
\end{tabular}

\\
 &  \\
The arrangement 11.A.i. & The  arrangement 11.A.ii. \\
 &  \\

\begin{tabular}{|ccc|ccc|cccc|}
\hline
$e_1$ & $e_1$ & $e_1$ & $e_2$	&	$e_3$ & $e_6$ & $e_4$ & $e_4$	&	$e_2$ & $e_3$\\
$e_2$ & $e_5$ & $e_8$ & $e_5$	&	$e_8$ & $e_9$ & $e_5$ & $e_7$	&	$e_7$ & $e_6$\\
$e_3$ & $e_6$ & $e_9$ & $e_{11}$	&	$e_{11}$ & $e_{11}$ & $e_8$ & $e_9$	&	$e_{10}$ & $e_{10}$\\
$e_4$ & $e_7$ & $e_{10}$ & 	&	 &  &  & 	&	 & \\
\hline
\end{tabular}

&

\begin{tabular}{|ccc|ccc|cccc|}
\hline
$e_1$ & $e_1$ & $e_1$ & $e_2$	&	$e_3$ & $e_6$ & $e_4$ & $e_4$	&	$e_2$ & $e_3$\\
$e_2$ & $e_5$ & $e_8$ & $e_5$	&	$e_8$ & $e_9$ & $e_5$ & $e_7$	&	$e_7$ & $e_6$\\
$e_3$ & $e_6$ & $e_9$ & $e_{11}$	&	$e_{11}$ & $e_{11}$ & $e_9$ & $e_8$	&	$e_{10}$ & $e_{10}$\\
$e_4$ & $e_7$ & $e_{10}$ & 	&	 &  &  & 	&	 & \\
\hline
\end{tabular}

\\
 &  \\
The  arrangement 11.A.iii. & The  arrangement 11.A.iv. \\
 &  \\

\begin{tabular}{|ccc|ccc|cccc|}
\hline
$e_1$ & $e_1$ & $e_1$ & $e_2$	&	$e_3$ & $e_6$ & $e_4$ & $e_4$	&	$e_3$ & $e_2$\\
$e_2$ & $e_5$ & $e_8$ & $e_5$	&	$e_8$ & $e_9$ & $e_5$ & $e_7$	&	$e_7$ & $e_6$\\
$e_3$ & $e_6$ & $e_9$ & $e_{11}$	&	$e_{11}$ & $e_{11}$ & $e_8$ & $e_9$	&	$e_{10}$ & $e_{10}$\\
$e_4$ & $e_7$ & $e_{10}$ & 	&	 &  &  & 	&	 & \\
\hline
\end{tabular}

&

\begin{tabular}{|ccc|ccc|cccc|}
\hline
$e_1$ & $e_1$ & $e_1$ & $e_2$	&	$e_3$ & $e_6$ & $e_4$ & $e_4$	&	$e_3$ & $e_2$\\
$e_2$ & $e_5$ & $e_8$ & $e_5$	&	$e_8$ & $e_9$ & $e_5$ & $e_7$	&	$e_7$ & $e_6$\\
$e_3$ & $e_6$ & $e_9$ & $e_{11}$	&	$e_{11}$ & $e_{11}$ & $e_9$ & $e_8$	&	$e_{10}$ & $e_{10}$\\
$e_4$ & $e_7$ & $e_{10}$ & 	&	 &  &  & 	&	 & \\
\hline
\end{tabular}

\\
 &  \\
The  arrangement 11.A.v. & The  arrangement 11.A.vi. \\
 &  \\

\begin{tabular}{|ccc|ccc|c|ccc|}
\hline
$e_1$ & $e_1$ & $e_1$ & $e_2$	&	$e_3$ & $e_6$ & $e_4$ & $e_4$	&	$e_2$ & $e_3$\\
$e_2$ & $e_5$ & $e_8$ & $e_5$	&	$e_8$ & $e_9$ & $e_7$ & $e_5$	&	$e_7$ & $e_6$\\
$e_3$ & $e_6$ & $e_9$ & $e_{11}$	&	$e_{11}$ & $e_{11}$ & $e_{10}$ & $e_8$	&	$e_9$ & $e_{10}$\\
$e_4$ & $e_7$ & $e_{10}$ & 	&	 &  &  & 	&	 & \\
\hline
\end{tabular}

&

\begin{tabular}{|ccc|ccc|c|ccc|}
\hline
$e_1$ & $e_1$ & $e_1$ & $e_2$	&	$e_3$ & $e_6$ & $e_4$ & $e_4$	&	$e_2$ & $e_3$\\
$e_2$ & $e_5$ & $e_8$ & $e_5$	&	$e_8$ & $e_9$ & $e_7$ & $e_5$	&	$e_7$ & $e_6$\\
$e_3$ & $e_6$ & $e_9$ & $e_{11}$	&	$e_{11}$ & $e_{11}$ & $e_{10}$ & $e_9$	&	$e_8$ & $e_{10}$\\
$e_4$ & $e_7$ & $e_{10}$ & 	&	 &  &  & 	&	 & \\
\hline
\end{tabular}

\\
 &  \\
The arrangement 11.A.vii. & The  arrangement 11.A.viii. \\
 &  \\

\begin{tabular}{|ccc|ccc|c|ccc|}
\hline
$e_1$ & $e_1$ & $e_1$ & $e_2$	&	$e_3$ & $e_6$ & $e_4$ & $e_4$	&	$e_2$ & $e_3$\\
$e_2$ & $e_5$ & $e_8$ & $e_5$	&	$e_8$ & $e_9$ & $e_7$ & $e_6$	&	$e_7$ & $e_5$\\
$e_3$ & $e_6$ & $e_9$ & $e_{11}$	&	$e_{11}$ & $e_{11}$ & $e_{10}$ & $e_8$	&	$e_9$ & $e_{10}$\\
$e_4$ & $e_7$ & $e_{10}$ & 	&	 &  &  & 	&	 & \\
\hline
\end{tabular}

&

\begin{tabular}{|ccc|ccc|c|ccc|}
\hline
$e_1$ & $e_1$ & $e_1$ & $e_2$	&	$e_3$ & $e_6$ & $e_4$ & $e_4$	&	$e_3$ & $e_2$\\
$e_2$ & $e_5$ & $e_8$ & $e_5$	&	$e_8$ & $e_9$ & $e_7$ & $e_5$	&	$e_7$ & $e_6$\\
$e_3$ & $e_6$ & $e_9$ & $e_{11}$	&	$e_{11}$ & $e_{11}$ & $e_{10}$ & $e_8$	&	$e_9$ & $e_{10}$\\
$e_4$ & $e_7$ & $e_{10}$ & 	&	 &  &  & 	&	 & \\
\hline
\end{tabular}

\\
 &  \\
The  arrangement 11.A.ix. & The  arrangement 11.A.x. \\
 &  

\end{tabular}
}
\caption{Arrangements with eleven triples whose distinguished three lines form a central subarrangement.  All of these have irreducible moduli spaces.}
\label{tab:11central}
\end{table}

\begin{table}[htbp]\renewcommand{\arraystretch}{2.25}
\begin{tabular}{|c||c|c|c|c|c|c|}
\hline
 \multicolumn{7}{|c|}{11.A.}\\
\hline
\hline
Arr. & $y=0,z,bz$ & $x=0,z,az$ & $y=Ax$ &	$y=Bx+z$ &	$y=C(x-z)$ & $y=D(x-z)+z$ \\
\hline
\hline
i. & $L_3,L_2,L_1$	& $L_5,L_6,L_4$	& $L_{10}$:  $1$ & {}	&	$L_7$: $\frac{a}{ac-a-c}$ &	{} \\
\hline
 & \multicolumn{6}{|c|}{with $e_{10}:(c,0)$ and $L_8: y=\frac{1}{a-c}(x-cz)$, $L_9: y=\frac{-a}{c}x+az$}\\
 & \multicolumn{6}{|c|}{and satisfying $a-b=0$ and $2c(a-1)-(a^2+a-1)=0$}\\
\hline
\hline
ii. & $L_1,L_2,L_3$	& $L_4,L_5,L_6$	& $L_9$:  $\frac{b}{a+ab-1}$ & $L_{10}$: $-1$ &	{} &	{} \\
\hline
 & \multicolumn{6}{|c|}{with $L_7: y=\frac{b-1}{1-a}(x-az)+z$, $L_8: y=\frac{b(b-1)}{1-a}(x-az)+bz$ and satisfying $a(1+b)-(2-b)=0$}\\
\hline
\hline
iii. & $L_1,L_2,L_3$	& $L_4,L_5,L_6$	& $L_9$:  $\frac{b}{a-1}$ & $L_7$: $b-1$ &	$L_{10}$: $\frac{1}{a-1}$ &	{} \\
\hline
 & \multicolumn{6}{|c|}{with $L_8: y=\frac{b(1-b)}{a-ab-1}(x-az)+bz$ and satisfying $a(1-b)-(2-b)=0$} \\
\hline
\hline
iv. & $L_1,L_2,L_3$	& $L_4,L_5,L_6$	& $L_9$:  $\frac{b}{1-a}$ & $L_7$: $\frac{b-1}{a}$ &	$L_{10}$: $\frac{1}{a-1}$ &	{} \\
\hline
 & \multicolumn{6}{|c|}{with $L_8: y=\frac{b(1-b)}{1-a-b}(x-z)+bz$ and satisfying $a(1+b)-b=0$} \\
\hline
\hline
v. & $L_1,L_2,L_3$	& $L_4,L_5,L_6$	& $L_{10}$:  $\frac{1}{a}$ & $L_7$: $b-1$ &	$L_9$: $\frac{b}{a-b-1}$ &	{} \\
\hline
 & \multicolumn{6}{|c|}{with $L_8: y=\frac{b(1-b)}{a-ab-1}(x-az)+bz$ and satisfying $a(1-b)-b=0$} \\
\hline
\end{tabular}
\caption{Equations for (geometric) arrangements 11.A.}
\label{tab:11.A.EQNS1}
\end{table}

\begin{table}[htbp]\renewcommand{\arraystretch}{2.25}
\begin{tabular}{|c||c|c|c|c|c|c|}
\hline
 \multicolumn{7}{|c|}{11.A. continued}\\
\hline
\hline
Arr. & $y=0,z,bz$ & $x=0,z,az$ & $y=Ax$ &	$y=Bx+z$ &	$y=C(x-z)$ & $y=D(x-z)+z$ \\
\hline
\hline
vi. & $L_1,L_2,L_3$	& $L_4,L_5,L_6$	& $L_{10}$:  $\frac{1}{a}$ & $L_7$: $\frac{b-1}{a}$ &	$L_9$: $\frac{b}{1-a-b}$ &	{} \\
\hline
 & \multicolumn{6}{|c|}{with $L_8: y=\frac{b(1-b)}{1-a-b}(x-z)+bz$ and satisfying $  a(b+1) - (2-b)=0$} \\ 
\hline
\hline
vii. & $L_1,L_2,L_3$	& $L_4,L_5,L_6$	& $L_9$:  $\frac{b}{a}$ & $L_8$: $b-1$ &	$L_{10}$: $\frac{1}{a-1}$ &	{} \\
\hline
 & \multicolumn{6}{|c|}{with $L_7: y=\frac{b(1-b)}{a-b-ab}(x-\frac{1}{1-b}z)$ and satisfying $a(b-1)-b=0$} \\
\hline
\hline
viii. & $L_1,L_2,L_3$	& $L_4,L_5,L_6$	& $L_9$:  $b$ & $L_8$: $\frac{b-1}{a}$ &	$L_{10}$: $\frac{1}{a-1}$ &	{} \\
\hline
 & \multicolumn{6}{|c|}{with $L_7: y=\frac{b(1-b)}{1-b-ab}(x-\frac{a}{1-b}z)$ and satisfying $a(b-1)-b=0$} \\
\hline
\hline
ix. & $L_1,L_2,L_3$	& $L_4,L_5,L_6$	& $L_9$:  $\frac{b}{a}$ & $L_{10}$: $-1$ &	{} &	{} \\
\hline
 & \multicolumn{6}{|c|}{with $L_7: y=\frac{b(b-1)}{b-b^2-a}(x-\frac{1-ab}{1-b}z)$, $L_8: y=\frac{1-b}{a-1}(x-z)+bz$ and satisfying $a(b-1)-b=0$} \\
\hline
\hline
x. & $L_1,L_2,L_3$	& $L_4,L_5,L_6$	& $L_{10}$:  $\frac{1}{a}$ & $L_8$: $b-1$ &	$L_9$: $\frac{b}{a-1}$ &	{} \\
\hline
 & \multicolumn{6}{|c|}{with $L_7: y=\frac{b(1-b)}{ab-ab^2-1}(x-\frac{1}{1-b}z)$ and satisfying $a(b-1)-b=0$} \\
\hline
\end{tabular}
\caption{Equations for (geometric) arrangements 11.A.}
\label{tab:11.A.EQNS2}
\end{table}

\subsection{Generic subarrangement}
\label{subsec:11generic}

Now suppose the three lines $L_1,L_2,L_3$ that each contain four triples are not central but form a generic subarrangement giving three doubles.  We proceed according to how many of these three doubles are in fact triples in $\mathcal{A}$, the original arrangement.

Suppose none of the three doubles are triples in $\mathcal{A}$; then there are four distinct triples on each of the three lines, giving a total of twelve triples, a contradiction.

Suppose one of the three doubles is a triple in $\mathcal{A}$; then these three lines contain an additional ten triples, giving a total of eleven triples.  However, the line $L_4$ also passing through this first triple must pass through two more but can only pass through one, a contradiction.

Suppose two of the three doubles are triples in $\mathcal{A}$, say $L_1\cap L_2=e_1$ and $L_1\cap L_3=e_2$; then these three lines contain an additional eight triples.  We may assume that the triples $e_3,e_4$ are on $L_1$, the triples $e_5,e_6,e_7$ are on $L_2$, and the triples $e_8,e_9,e_{10}$ are on $L_3$.  This leaves one additional triple $e_{11}$ not on these lines.  Let $L_4$ and $L_5$ be the last lines passing through the triples $e_1$ and $e_2$, respectively.  Then they must intersect at $e_{11}$ or else they could not contain three triples, and we may assume up to symmetry of the triples above that the triple $e_5$ is on the line $L_5$ with triple $e_2$ and that the triple $e_8$ is on the line $L_4$ with triple $e_1$.  We may then assume that the triples $e_5,e_6,e_6,e_7,e_7$ lie on the lines $L_6,L_7,L_8,L_9,L_{10}$, respectively.

\begin{lemma}[11.B.2.]
\label{lem:11generic2}
For the generic subarrangement of $L_1,L_2,L_3$ for eleven triples where two of the three doubles are triples, there are four combinatorial configurations, all of which are geometric and have irreducible or complex conjugate moduli spaces.  Thus there are no potential Zariski pairs.  
  Configuration tables are given in Table \ref{tab:11generic2} and equations are given in Table \ref{tab:11.B.2.EQNS1}.
\end{lemma}

\begin{proof}
Suppose first that the triples $e_5$ and $e_8$ are colinear on the line $L_6$.  Then without loss of generality we may assume that the triples  $e_9,e_{10},e_9,e_{10}$ are on the lines $L_7,L_8,L_9,L_{10}$, respectively.  Since the triple $e_{11}$ is already colinear with triples $e_5$ and $e_8$ it cannot be on the line $L_6$, and up to symmetry of triples $e_3$ and $e_4$ we may assume that the triple $e_3$ is on the line $L_6$.  Then up to symmetry of triples $e_9$ and $e_{10}$ we may assume that the triple $e_4$ is on both lines $L_7$ and $L_{10}$.  This gives just one arrangement, the arrangements { 11.B.2.i.}. in Table \ref{tab:11generic2}.

\begin{table}[htbp]
{ \begin{tabular}{cc}
\begin{tabular}{|ccc|cc|c|cccc|}
\hline
$L_1$ &	$L_2$ &	$L_3$ & $L_4$	& $L_5$ &	$L_6$ &	$L_7$ & $L_8$ &	$L_9$ & $L_{10}$\\
\hline
$e_1$ & $e_1$ & $e_2$ & $e_1$			&	$e_2$ 	 & $e_5$ & $e_6$ & $e_6$		&	$e_7$ & $e_7$\\
$e_2$ & $e_5$ & $e_8$ & $e_8$			&	$e_5$ 	 & $e_8$ & $e_9$ & $e_{10}$	&	$e_9$ & $e_{10}$\\
$e_3$ & $e_6$ & $e_9$ & $e_{11}$	&	$e_{11}$ & $e_3$ & $e_4$ & $e_{11}$	&	$e_3$ & $e_4$\\
$e_4$ & $e_7$ & $e_{10}$ & 	&	 & 	&  & 	&	 & \\
\hline
\end{tabular}

 &

\begin{tabular}{|ccc|cc|ccccc|}
\hline
$L_1$ &	$L_2$ &	$L_3$ & $L_4$	& $L_5$ &	$L_6$ &	$L_7$ & $L_8$ &	$L_9$ & $L_{10}$\\
\hline
$e_1$ & $e_1$ & $e_2$ & $e_1$			&	$e_2$ 	 & $e_5$ 		& $e_6$ 	 & $e_6$		&	$e_7$ & $e_7$\\
$e_2$ & $e_5$ & $e_8$ & $e_8$			&	$e_5$ 	 & $e_{10}$ & $e_{10}$ & $e_9$		&	$e_9$ & $e_8$\\
$e_3$ & $e_6$ & $e_9$ & $e_{11}$	&	$e_{11}$ & $e_3$ 		& $e_{11}$ & $e_4$		&	$e_3$ & $e_4$\\
$e_4$ & $e_7$ & $e_{10}$ & 	&	 & 	&  & 	&	 & \\
\hline
\end{tabular}

\\

 &  \\

The arrangement 11.B.2.i. & 
The {($*$)} arrangement 11.B.2.ii. 
\\

 &  \\

\begin{tabular}{|ccc|cc|cc|c|cc|}
\hline
$e_1$ & $e_1$ & $e_2$ & $e_1$			&	$e_2$ 	 & $e_5$ 		& $e_6$ 	 & $e_6$		&	$e_7$ & $e_7$\\
$e_2$ & $e_5$ & $e_8$ & $e_8$			&	$e_5$ 	 & $e_{10}$ & $e_{10}$ & $e_9$		&	$e_9$ & $e_8$\\
$e_3$ & $e_6$ & $e_9$ & $e_{11}$	&	$e_{11}$ & $e_3$ 		& $e_4$ 	 & $e_{11}$	&	$e_3$ & $e_4$\\
$e_4$ & $e_7$ & $e_{10}$ & 	&	 & 	&  & 	&	 & \\
\hline
\end{tabular}

&
\begin{tabular}{|ccc|cc|cc|c|cc|}
\hline
$e_1$ & $e_1$ & $e_2$ & $e_1$			&	$e_2$ 	 & $e_5$ 		& $e_6$ 	 & $e_6$		&	$e_7$ & $e_7$\\
$e_2$ & $e_5$ & $e_8$ & $e_8$			&	$e_5$ 	 & $e_{10}$ & $e_{10}$ & $e_9$		&	$e_9$ & $e_8$\\
$e_3$ & $e_6$ & $e_9$ & $e_{11}$	&	$e_{11}$ & $e_3$ 		& $e_4$ 	 & $e_{11}$	&	$e_4$ & $e_3$\\
$e_4$ & $e_7$ & $e_{10}$ & 	&	 & 	&  & 	&	 & \\
\hline
\end{tabular}
\\

 &  \\

The non-geometric arrangement 11.B.2.iii. & The {($\mathbb{C}$)} arrangement 11.B.2.iv.\\

 &  \\

\end{tabular}
}
\caption{Arrangements with eleven triples whose distinguished three lines form a generic subarrangement where two of the three doubles are triples.}
\label{tab:11generic2}
\end{table}

\begin{table}[htbp]\renewcommand{\arraystretch}{2.25}
\begin{tabular}{|c||c|c|c|c|c|c|}
\hline
 \multicolumn{7}{|c|}{11.B.2.} \\
\hline
\hline
Arr. & $y=0,z,bz$ & $x=0,z,az$ & $y=Ax$ &	$y=Bx+z$ &	$y=C(x-z)$ & $y=D(x-z)+z$ \\
\hline
\hline
i. & $L_1,L_2,L_4$	& $L_7,L_9,L_3$	& $L_{10}$:  $1$ & $L_8$: $\frac{a-1}{a}$ &	$L_6$: $\frac{b}{a-1}$ &	{} \\
\hline
 & \multicolumn{6}{|c|}{with $L_{10}: y=\frac{b}{a+b-ab-1}(x-az)$ and satisfying $b(a^2-a+1)-(2a^2-2a+1)=0$} \\
\hline
\hline
ii. & $L_1,L_2,L_4$	& $L_8,L_9,L_3$	& $L_{10}$:  $1$ & $L_7$: $\frac{a-1}{a(c-a+1)}$ &	$L_{6}$:  $\frac{1}{c-1)}$ &	{} \\
\hline
 & \multicolumn{6}{|c|}{with $e_5=(c,1)$ and $L_7: y=\frac{1}{c-a}(x-az)$}\\
 & \multicolumn{6}{|c|}{and satisfying $a-b=0$ and $a^2-(c+2)a+c^2+1=0$, irreducible by hand}\\
\hline
\hline
iii. & $L_1,L_2,L_4$	& $L_6,L_7,L_3$	& $L_{10}$:  $\frac{b}{a}$ & $L_5$:  $-\frac{1}{a}$	&	$L_9$: $\frac{b}{a+b-1}$ &	$L_8$:  $\frac{1-b}{ab-a+1}$ \\
\hline
 & \multicolumn{6}{|c|}{satisfying $a+1=0$}\\
\hline
\hline
iv. & $L_1,L_2,L_4$	& $L_6,L_7,L_3$	& $L_{10}$:  $\frac{b}{a}$ & $L_5$:  $-\frac{1}{a}$	&	$L_9$: $\frac{b}{a-b}$ &	$L_8$:  $\frac{1-b}{ab-a+1}$ \\
\hline
 & \multicolumn{6}{|c|}{satisfying $a^2-a+1=0^\mathbb{C}$}\\
\hline
\end{tabular}
\caption{Equations for arrangements 11.B.2.}
\label{tab:11.B.2.EQNS1}
\end{table}

Next suppose that the triples $e_5$ and $e_8$ are not colinear with each other; and further suppose that they are not colinear with the same triple on the line $L_1$: then we may assume the triple $e_3$ is with triple $e_5$ on $L_6$ and the triple $e_4$ with trople $e_8$ on $L_{10}$.  Up to symmetry of triples ($e_6,e_7$) and ($e_9,e_{10}$) we may assume that triples $e_{10},e_{10},e_9,e_9,e_8$ lie on lines $L_6,L_7,L_8,L_9,L_{10}$, respectively.  Then the line $L_7$ cannot contain the triple $e_3$ and the line $L_9$ cannot contain the triple $e_4$.  Up to symmetry of triples ($e_3,e_4$) with ($e_5,e_8$)($e_6,e_{10}$)($e_7,e_9$), this gives just two arrangements, as in Table \ref{tab:11generic2}:  the arrangements 11.B.2.ii. and 11.B.2.iii., as determined by whether $e_{11}$ is on the line $L_7$ (which is the same as when it is on the line $L_9$) or on the line $L_8$, respectively.

Finally suppose that the triples $e_5$ and $e_8$ are not colinear with each other but that they are each colinear with the same triple, say $e_3$, on the line $L_1$.  As before we may assume that triples $e_{10},e_{10},e_9,e_9,e_8$ lie on lines $L_6,L_7,L_8,L_9,L_{10}$, respectively.  Then the triple $e_4$ must be on lines $L_7$ and $L_9$, leaving the triple $e_{11}$ on line $L_8$ as in the arrangement 11.B.2.iv. in Table \ref{tab:11generic2}.
\end{proof}

Suppose finally that all three doubles are triples in $\mathcal{A}$, say the triple $e_1$ is the intersection $L_1\cap L_2$, the triple $e_2$ is the intersection $L_1\cap L_3$, and the triple $e_3$ is the intersection $L_2\cap L_3$; then these three lines contain an additional six triples.  We may assume that the triples $e_4,e_5$ are on $L_1$, the triples $e_6,e_7$ are on $L_2$, and the triples $e_8,e_9$ are on $L_3$.  This leaves two additional triples $e_{10},e_{11}$ not on these lines.  Let $L_4,L_5,L_6$ be the last lines passing through the triples $e_1,e_2,e_3$, respectively.

The proof concludes with the consideration of a further subarrangement of the next three lines $L_4,L_5,L_6$:  first forming a central subarrangement and then a generic one.  In the latter instance, this leads to another case-by-case consideration of how many of the three doubles formed by this subarrangement are indeed triples in the original arrangement.


\subsubsection{Central subarrangement}

First suppose these three lines $L_4,L_5,L_6$ form a central subarrangement, and let $e_{10}$ be the name of this triple.

If the last triple $e_{11}$ is not on these three lines, then up to the symmetry of triples ($e_4,e_5$)($e_6,e_7$)($e_8,e_9$), we may assume that the triple $e_8$ must be on the line $L_4$, the triple $e_6$ must be on the line $L_5$, and the triple $e_4$ must be on the line $L_6$, with the triples $e_5,e_7,e_9$ on the line $L_{10}$.  This gives just one arrangement:  the  arrangement 11.B.3.a.i. in Table \ref{tab:11generic3central}. 

\begin{table}[htbp]
{ \begin{tabular}{cc}
\begin{tabular}{|ccc|ccc|ccc|c|}
\hline
$L_1$ &	$L_2$ &	$L_3$ & $L_4$ & $L_5$ &	$L_6$ &	$L_7$ & $L_8$ &	$L_9$ & $L_{10}$\\
\hline
$e_1$ & $e_1$ & $e_2$ & $e_1$			&	$e_2$ 	 & $e_3$ 		& $e_5$  	 & $e_7$		&	$e_9$ 	 & $e_5$\\
$e_2$ & $e_3$ & $e_3$ & $e_8$			&	$e_6$ 	 & $e_4$ 		& $e_6$ 	 & $e_8$		&	$e_4$ 	 & $e_7$\\
$e_4$ & $e_6$ & $e_8$ & $e_{10}$	&	$e_{10}$ & $e_{10}$ & $e_{11}$ & $e_{11}$	&	$e_{11}$ & $e_9$\\
$e_5$ & $e_7$ & $e_9$ & 	&	 & 	&  & 	&	 & \\
\hline
\end{tabular}

 &

\begin{tabular}{|ccc|ccc|cccc|}
\hline
$L_1$ &	$L_2$ &	$L_3$ & $L_4$ & $L_5$ &	$L_6$ &	$L_7$ & $L_8$ &	$L_9$ & $L_{10}$\\
\hline
$e_1$ & $e_1$ & $e_2$ & $e_1$			&	$e_2$ 	 & $e_3$ 		& $e_5$  	 & $e_7$		&	$e_5$ 	 & $e_4$\\
$e_2$ & $e_3$ & $e_3$ & $e_{10}$	&	$e_6$ 	 & $e_4$ 		& $e_6$ 	 & $e_8$		&	$e_9$ 	 & $e_7$\\
$e_4$ & $e_6$ & $e_8$ & $e_{11}$	&	$e_{10}$ & $e_{10}$ & $e_8$ 	 & $e_{11}$	&	$e_{11}$ & $e_9$\\
$e_5$ & $e_7$ & $e_9$ & 	&	 & 	&  & 	&	 & \\
\hline
\end{tabular}
\\

 &  \\

The  arrangement 11.B.3.a.i. & The  arrangement 11.B.3.a.ii.\\

 &  \\

\begin{tabular}{|ccc|ccc|cccc|}
\hline
$e_1$ & $e_1$ & $e_2$ & $e_1$			&	$e_2$ 	 & $e_3$ 		& $e_5$  	 & $e_7$		&	$e_4$ 	 & $e_5$\\
$e_2$ & $e_3$ & $e_3$ & $e_{10}$	&	$e_6$ 	 & $e_4$ 		& $e_6$ 	 & $e_8$		&	$e_9$ 	 & $e_7$\\
$e_4$ & $e_6$ & $e_8$ & $e_{11}$	&	$e_{10}$ & $e_{10}$ & $e_8$ 	 & $e_{11}$	&	$e_{11}$ 	 & $e_9$\\
$e_5$ & $e_7$ & $e_9$ & 	&	 & 	&  & 	&	 & \\
\hline
\end{tabular}

 & \\

 &  \\

The  arrangement 11.B.3.a.iii.& \\

 &  

\end{tabular}
}	
\caption{Arrangements with eleven triples whose distinguished three lines form a generic subarrangement where two of the three doubles are triples.}
\label{tab:11generic3central}
\end{table}

\begin{table}[htbp]\renewcommand{\arraystretch}{2.25}
\begin{tabular}{|c||c|c|c|c|c|c|}
\hline
Arr. & $y=0,z,bz$ & $x=0,z,az$ & $y=Ax$ &	$y=Bx+z$ &	$y=C(x-z)$ & $y=D(x-z)+z$ \\
\hline
\hline
 \multicolumn{7}{|c|}{11.B.3.a.} \\
\hline
\hline
i. & $L_1,L_2,L_4$	& $L_3,L_{10},L_9$	& $L_6$:  $-\frac{1}{a}$ & $L_5$: $\frac{b}{a(1-b)}$ &	$L_7$: $\frac{b}{a-b-ab}$ &	{} \\
\hline
 & \multicolumn{6}{|c|}{with $L_8: y=(1-b)x+bz$ }\\
 & \multicolumn{6}{|c|}{and satisfying $a^2(b^2-2b+1)-ab-(b^2-b)=0$ (see Example \ref{11.B.3.a.-irr})} \\
\hline
\hline
ii. & $L_1,L_2,L_4$	& $L_3,L_{10},L_9$	& $L_5$:  $\frac{b}{1-b}$ & $L_6$:  $-1$	&	{} &	$L_8$:  $\frac{b-1}{a-1}$ \\
\hline
 & \multicolumn{6}{|c|}{with $L_7: y=\frac{b}{1-b-ab}(x-az)$ and satisfying $a(b-1)+b=0$}\\
\hline
\hline
iii. & $L_1,L_2,L_4$	& $L_3,L_{10},L_9$	& $L_7$:  -$\frac{1}{a}$ & $L_4$: $\frac{b}{a(1-b)}$ &	$L_7$: $\frac{b}{a-b-ab}$ &	$L_8$:  $\frac{b-1}{a-1)}$ \\
\hline
 & \multicolumn{6}{|c|}{satisfying $b(a-1) - a^2=0$ }\\ 
\hline
\end{tabular}
\caption{Equations for (geometric) arrangements 11.B.3.a..}
\label{tab:11.B.3.a.EQNS1}
\end{table}

If the last triple $e_{11}$ is on one of these three lines, say $L_4$, then it cannot be on the lines $L_5$ or $L_6$ because of the triple $e_{10}$, and so we may assume the triple $e_6$ is on the line $L_5$ and that the triple $e_4$ is on the line $L_6$ by the symmetry of triples ($e_4,e_5$)($e_6,e_7$).  Then up to symmetry the last four lines $L_7,L_8,L_9,L_{10}$ contain the triples $e_8,e_8,e_9,e_9$, respectively.

If the triples $e_5$ and $e_7$ are not colinear, this gives just one arrangement up to symmetry:  the arrangement  11.B.3.a.ii. in Table \ref{tab:11generic3central}.  If they are colinear, this gives just one arrangement up to symmetry, as well:  the arrangement 11.B.3.a.iii. in Table \ref{tab:11generic3central}.

%


\subsubsection{Generic subarrangement}

Now suppose the three lines $L_4,L_5,L_6$ form a generic subarrangement, forming three doubles.  At most two of these can indeed be triples, as the first three lines already contain nine triples.  At least one of these must be a triple because a subarrangement of the last four lines cannot contain two triples by Fact \ref{fact:2trip5lines}.

\begin{lemma}[11.B.3.b.2.]
\label{lem:11generic3generic2}
For the generic subarrangement of $L_1,L_2,L_3$ for eleven triples where all three of the doubles are triples, with the generic subarrangement of $L_4,L_5,L_6$ where two of these doubles are triples, there are seven combinatorial configurations, all of which are geometric and have irreducible moduli spaces.  Thus there are no potential Zariski pairs.  
  Configuration tables are given in Table \ref{tab:11generic3generic2} and equations are given in Table \ref{tab:11.B.3.b.2.EQNS1}.
\end{lemma}

\begin{proof}
We assume that two of the three doubles formed by the generic subarrangement of $L_4,L_5,L_6$ are indeed triples, say the triple $e_{10}$ is the intersection $L_4\cap L_5$ and the triple $e_{11}$ is the triple $L_4\cap L_6$.  By the symmetry of the triples ($e_4,e_5$)($e_6,e_7$), we may assume that the triple $e_6$ is on the line $L_5$ and that the triple $e_4$ is on the line $L_6$.  Then up to the symmetry of triples ($e_8,e_9$) we may assume that the last four lines $L_7,L_8,L_9,L_{10}$ contain the triples $e_8,e_8,e_9,e_9$, respectively, as well as the triples $e_4,e_5,e_5,-$, respectively.

If the triple $e_6$ is on the line $L_7$, there are two arrangements:  the arrangements 11.B.3.b.2.i. and 11.B.3.b.2.ii. in Table \ref{tab:11generic3generic2}.  If the triple $e_6$ is on the line $L_8$, there are two arrangements:  the arrangements 11.B.3.b.2.iii. and 11.B.3.b.2.iv. in Table \ref{tab:11generic3generic2}.  If the triple $e_6$ is on the line $L_9$, there are two arrangements:  the arrangements 11.B.3.b.2.v. and 11.B.3.b.2.vi. in Table \ref{tab:11generic3generic2}.  If the triple $e_6$ is on the line $L_{10}$, there is just one arrangement:  the arrangement 11.B.3.b.2.vii. in Table \ref{tab:11generic3generic2}.


The equations for these arrangements are given in Table \ref{tab:11.B.3.b.2.EQNS1}.
\end{proof}

\begin{table}[htbp]
{ \begin{tabular}{cc}

\begin{tabular}{|ccc|ccc|cccc|}
\hline
$L_1$ &	$L_2$ &	$L_3$ & $L_4$ & $L_5$ &	$L_6$ &	$L_7$ & $L_8$ &	$L_9$ & $L_{10}$\\
\hline
$e_1$ & $e_1$ & $e_2$ & $e_1$			&	$e_2$ 	 & $e_3$ 		& $e_4$  	 & $e_5$		&	$e_5$ 	 & $e_7$\\
$e_2$ & $e_3$ & $e_3$ & $e_{10}$	&	$e_6$ 	 & $e_4$ 		& $e_6$ 	 & $e_7$		&	$e_9$ 	 & $e_9$\\
$e_4$ & $e_6$ & $e_8$ & $e_{11}$	&	$e_{10}$ & $e_{11}$ & $e_8$ 	 & $e_8$		&	$e_{10}$ & $e_{11}$\\
$e_5$ & $e_7$ & $e_9$ & 	&	 & 	&  & 	&	 & \\
\hline
\end{tabular}

 &

\begin{tabular}{|ccc|ccc|cccc|}
\hline
$L_1$ &	$L_2$ &	$L_3$ & $L_4$ & $L_5$ &	$L_6$ &	$L_7$ & $L_8$ &	$L_9$ & $L_{10}$\\
\hline
$e_1$ & $e_1$ & $e_2$ & $e_1$			&	$e_2$ 	 & $e_3$ 		& $e_4$  	 & $e_5$		&	$e_5$ 	 & $e_7$\\
$e_2$ & $e_3$ & $e_3$ & $e_{10}$	&	$e_6$ 	 & $e_4$ 		& $e_6$ 	 & $e_7$		&	$e_9$ 	 & $e_9$\\
$e_4$ & $e_6$ & $e_8$ & $e_{11}$	&	$e_{10}$ & $e_{11}$ & $e_8$ 	 & $e_8$		&	$e_{11}$ & $e_{10}$\\
$e_5$ & $e_7$ & $e_9$ & 	&	 & 	&  & 	&	 & \\
\hline
\end{tabular}
\\

 &  \\

The arrangement 11.B.3.b.2.i. & The  arrangement 11.B.3.b.2.ii.\\

 &  \\

\begin{tabular}{|ccc|ccc|cccc|}
\hline
$e_1$ & $e_1$ & $e_2$ & $e_1$			&	$e_2$ 	 & $e_3$ 		& $e_4$  	 & $e_5$		&	$e_5$ 	 & $e_7$\\
$e_2$ & $e_3$ & $e_3$ & $e_{10}$	&	$e_6$ 	 & $e_4$ 		& $e_7$ 	 & $e_6$		&	$e_9$ 	 & $e_9$\\
$e_4$ & $e_6$ & $e_8$ & $e_{11}$	&	$e_{10}$ & $e_{11}$ & $e_8$ 	 & $e_8$		&	$e_{10}$ & $e_{11}$\\
$e_5$ & $e_7$ & $e_9$ & 	&	 & 	&  & 	&	 & \\
\hline
\end{tabular}

 &

\begin{tabular}{|ccc|ccc|cccc|}
\hline
$e_1$ & $e_1$ & $e_2$ & $e_1$			&	$e_2$ 	 & $e_3$ 		& $e_4$  	 & $e_5$		&	$e_5$ 	 & $e_7$\\
$e_2$ & $e_3$ & $e_3$ & $e_{10}$	&	$e_6$ 	 & $e_4$ 		& $e_7$ 	 & $e_6$		&	$e_9$ 	 & $e_9$\\
$e_4$ & $e_6$ & $e_8$ & $e_{11}$	&	$e_{10}$ & $e_{11}$ & $e_8$ 	 & $e_8$		&	$e_{11}$ & $e_{10}$\\
$e_5$ & $e_7$ & $e_9$ & 	&	 & 	&  & 	&	 & \\
\hline
\end{tabular}
\\

 &  \\

The {($\mathbb{C}$)} arrangement 11.B.3.b.2.iii. & The {($\mathbb{C}$)} arrangement 11.B.3.b.2.iv.\\

 &  \\

\begin{tabular}{|ccc|ccc|cccc|}
\hline
$e_1$ & $e_1$ & $e_2$ & $e_1$			&	$e_2$ 	 & $e_3$ 		& $e_4$  	 & $e_5$		&	$e_5$ 	 & $e_7$\\
$e_2$ & $e_3$ & $e_3$ & $e_{10}$	&	$e_6$ 	 & $e_4$ 		& $e_7$ 	 & $e_8$		&	$e_6$ 	 & $e_9$\\
$e_4$ & $e_6$ & $e_8$ & $e_{11}$	&	$e_{10}$ & $e_{11}$ & $e_8$ 	 & $e_{10}$	&	$e_9$ 	 & $e_{11}$\\
$e_5$ & $e_7$ & $e_9$ & 	&	 & 	&  & 	&	 & \\
\hline
\end{tabular}

 &

\begin{tabular}{|ccc|ccc|cccc|}
\hline
$e_1$ & $e_1$ & $e_2$ & $e_1$			&	$e_2$ 	 & $e_3$ 		& $e_4$  	 & $e_5$		&	$e_5$ 	 & $e_7$\\
$e_2$ & $e_3$ & $e_3$ & $e_{10}$	&	$e_6$ 	 & $e_4$ 		& $e_7$ 	 & $e_8$		&	$e_6$ 	 & $e_9$\\
$e_4$ & $e_6$ & $e_8$ & $e_{11}$	&	$e_{10}$ & $e_{11}$ & $e_8$ 	 & $e_{11}$	&	$e_9$ 	 & $e_{10}$\\
$e_5$ & $e_7$ & $e_9$ & 	&	 & 	&  & 	&	 & \\
\hline
\end{tabular}
\\

 &  \\

The {($\mathcal{Z}$)} arrangement 11.B.3.b.2.v. & The  arrangement 11.B.3.b.2.vi.\\

 &  \\

\begin{tabular}{|ccc|ccc|cccc|}
\hline
$e_1$ & $e_1$ & $e_2$ & $e_1$			&	$e_2$ 	 & $e_3$ 		& $e_4$  	 & $e_5$		&	$e_5$ 	 & $e_6$\\
$e_2$ & $e_3$ & $e_3$ & $e_{10}$	&	$e_6$ 	 & $e_4$ 		& $e_7$ 	 & $e_8$		&	$e_7$ 	 & $e_9$\\
$e_4$ & $e_6$ & $e_8$ & $e_{11}$	&	$e_{10}$ & $e_{11}$ & $e_8$ 	 & $e_{10}$	&	$e_9$ 	 & $e_{11}$\\
$e_5$ & $e_7$ & $e_9$ & 	&	 & 	&  & 	&	 & \\
\hline
\end{tabular}

 & \\

 &  \\

The  arrangement 11.B.3.b.2.vii. & \\

 &  

\end{tabular}
}	
\caption{Arrangements with eleven triples whose distinguished three lines form a generic subarrangement where all three of the doubles are triples and whose next three lines form a generic subarrangement where two of the three doubles are triples.}
\label{tab:11generic3generic2}
\end{table}

\begin{table}[htbp]\renewcommand{\arraystretch}{2.25}
\begin{tabular}{|c||c|c|c|c|c|c|}
\hline
 \multicolumn{7}{|c|}{11.B.3.b.2. } \\
\hline
\hline
Arr. & $y=0,z,bz$ & $x=0,z,az$ & $y=Ax$ &	$y=Bx+z$ &	$y=C(x-z)$ & $y=D(x-z)+z$ \\
\hline
\hline
i. & $L_1,L_2,L_4$	& $L_3,L_7,L_8$	& $L_5$:  $1$ & $L_6$:  $-1$	&	{} &	{} \\
\hline
 & \multicolumn{6}{|c|}{with $L_9: y=\frac{b}{b-a}(x-az)$, $L_{10}: y=\frac{1-b}{a+b-1}(x-az)+z$ and satisfying $a(2b^2-2b-1)-(b-b^2)=0$}\\
\hline
\hline
ii. & $L_1,L_2,L_4$	& $L_3,L_7,L_8$	& $L_5$:  $1$ & $L_6$:  $-1$	&	{} &	{} \\
\hline
 & \multicolumn{6}{|c|}{with $L_9: y=\frac{b}{1-a-b}(x-az)$, $L_{10}: y=\frac{1-b}{a-b}(x-az)+z$ and satisfying $2a-1=0$}\\
\hline
\hline
iii. & $L_1,L_2,L_4$	& $L_3,L_7,L_8$	& $L_5$:  $\frac{1}{a}$ & $L_6$:  $-1$	&	{} &	$L_{10}$: $\frac{1-b}{b}$ \\
\hline
 & \multicolumn{6}{|c|}{with $L_9: y=\frac{b}{ab-a}(x-az)$ and satisfying $3b^2-3b+1=0^\mathbb{C}$}\\
\hline
\hline
iv. & $L_1,L_2,L_4$	& $L_3,L_7,L_8$	& $L_5$:  $\frac{1}{a}$ & $L_6$:  $-1$	&	{} &	$L_{10}$: $\frac{b-1}{ab-1}$ \\
\hline
 & \multicolumn{6}{|c|}{with $L_9: y=\frac{b}{1-a-b}(x-az)$ and satisfying $a^2-a+1=0^\mathbb{C}$}\\
\hline
\hline
v. & $L_1,L_2,L_4$	& $L_3,L_7,L_8$	& $L_5$:  $\frac{b}{a}$ & $L_6$:  $-1$	&	{} &	$L_{10}$: $\frac{1-b}{b}$ \\
\hline
 & \multicolumn{6}{|c|}{with $L_9: y=\frac{b}{a(1-b)}(x-az)$ and satisfying $b^2-3b+1=0^\mathcal{Z}$}\\
\hline
\hline
vi. & $L_1,L_2,L_4$	& $L_7,L_3,L_8$	& $L_6$:  $1$ & $L_{10}$:  $\frac{a-1}{c}$	&	$L_5$: $\frac{a}{c-1}$ &	{} \\
\hline
 & \multicolumn{6}{|c|}{with $e_{10}:(c,b)$ and $L_9: y=\frac{a(1-a)}{c(c-1)}(x-az)$ and }\\
 & \multicolumn{6}{|c|}{satisfying $a-b=0$ and $c^2+(a-2)c-(a-1)(a^2-a+1)=0^*$}\\
\hline
\hline
vii. & $L_1,L_2,L_4$	& $L_3,L_7,L_8$	& $L_5$:  $\frac{b}{a}$ & $L_6$:  $-1$	&	{} &	$L_9$: $\frac{1}{1-a}$ \\
\hline
 & \multicolumn{6}{|c|}{with $L_{10}: y=\frac{a+b-ab}{(a-1)(b-1)}x+\frac{a}{a-1}z$ and }\\
 & \multicolumn{6}{|c|}{satisfying $b^2-b(a^2-a+1)+a^2=0$, irreducible by hand} \\
\hline
\end{tabular}
\caption{Equations for (geometric) arrangements 11.B.3.b.2..}
\label{tab:11.B.3.b.2.EQNS1}
\end{table}

Lastly we consider when just one of the three double points of the generic subarrangement of $L_4,L_5,L_6$ is a triple.

\begin{lemma}[11.B.3.b.1.]
\label{lem:11generic3generic1}
For the generic subarrangement of $L_1,L_2,L_3$ for eleven triples where all three of the doubles are triples, with the generic subarrangement of $L_4,L_5,L_6$ where just one of these doubles is a triple, there are thirteen combinatorial configurations, twelve of which are geometric and have irreducible moduli spaces.  Thus there are no potential Zariski pairs.
  Configuration tables are given in Tables \ref{tab:11generic3generic1a} and \ref{tab:11generic3generic1b} and equations are given in Tables \ref{tab:11.B.3.b.1.EQNS1} and \ref{tab:11.B.3.b.1.EQNS2}.
\end{lemma}


\begin{proof}
We assume that only one of the three doubles formed by the generic subarrangement of $L_4,L_5,L_6$ is indeed a triple, say the triple $e_{10}$ is the intersection of the lines $L_5\cap L_6$.  The line $L_4$ must contain three triples:  since it already contains the triple $e_1$, it cannot contain any of the triples $e_2,\ldots,e_7$; it cannot contain more than one of the triples $e_8,e_9$ since these are colinear on the line $L_3$; it cannot contain the triple $e_{10}$ since we have assumed that the lines $L_4,L_5,L_6$ are generic.

Thus the line $L_4$ must contain the last triple $e_{11}$ not already accounted for.  Then assuming that $L_4\cap L_5$ and $L_4\cap L_6$ are not triples and up to symmetries ($e_4,e_5$), ($e_6,e_7$), and ($e_8,e_9$), this forces the triple $e_8$ to be on the line $L_4$, the triple $e_6$ to be on the line $L_5$, and the triple $e_4$ to be on the line $L_6$.

Because the triple $e_8$ is already colinear with each of the triples $e_9$ and $e_{11}$, and because there are only four remaining lines, the triples $e_9$ and $e_{11}$ must be colinear, say on the line $L_7$.

Suppose that the triple $e_5$ (which up to the symmetry of triples ($e_2,e_3$)($e_4,e_6$) is equivalent to the triple $e_7$) is also on this line $L_7$.  Then the line that also contains the triple $e_5$, say $L_8$, must contain the triple $e_8$, as well.  If the last triple on this line $L_8$ is the triple $e_6$, then the last two lines $L_9$ and $L_{10}$ each have $e_7$ as well as one of $e_9,e_{11}$ and one of $e_4,e_{10}$.  This gives two arrangements up to symmetry:  the arrangements 11.B.3.b.1.i. and 11.B.3.b.1.ii. in Table \ref{tab:11generic3generic1a}.  If the last triple on this line $L_8$ is the triple $e_7$, then since the triple $e_{10}$ cannot be again contained on the same line as either triples $e_4$ or $e_6$, it must be with the triple $e_7$.  This gives two arrangements up to symmetry:  the arrangements 11.B.3.b.1.iii. and 11.B.3.b.1.iv. in Table \ref{tab:11generic3generic1a}.

Finally suppose that the line $L_7$ does not contain the triples $e_5$ or $e_7$.  Then up to symmetry the final three lines $L_8,L_9,L_{10}$ must contain the triples $e_5$ and $e_7$ together, $e_5$, and $e_7$, respectively.  If the last triple on the line $L_7$ is the triple $e_4$ (which up to the symmetry above is equivalent to $e_6$), then the triple $e_6$ must be on $L_9$ and the triple $e_{10}$ must be on $L_{10}$.  This gives six arrangements, following the six permutations of the remaining triples $e_8,e_9,e_{11}$ on the three lines $L_8,L_9,L_{10}$:  the arrangements 11.B.3.b.1.v. through 11.B.3.b.1.x. in Tables \ref{tab:11generic3generic1a} and \ref{tab:11generic3generic1b}.

If the last triple on the line $L_7$ is the triple $e_{10}$, then the triple $e_6$ must be on the line $L_9$ with $e_5$ and the triple $e_4$ must be on the line $L_{10}$ with $e_7$.  Notice, however, that there still is symmetry exchanging these two last lines.  Thus this gives only three arrangements, following the choice of one of the remaining triples $e_8,e_9,e_{11}$ to be on the line $L_8$:  the arrangements 11.B.3.b.1.xi. through 11.B.3.b.1.xiii. in Table \ref{tab:11generic3generic1b}.

The equations for these geometric arrangements are given in Tables \ref{tab:11.B.3.b.1.EQNS1} and \ref{tab:11.B.3.b.1.EQNS2}.
\end{proof}

\begin{table}[htbp]
{ \begin{tabular}{cc}

\begin{tabular}{|ccc|ccc|cccc|}
\hline
$L_1$ &	$L_2$ &	$L_3$ & $L_4$ & $L_5$ &	$L_6$ &	$L_7$ & $L_8$ &	$L_9$ & $L_{10}$\\
\hline
$e_1$ & $e_1$ & $e_2$ & $e_1$			&	$e_2$ 	 & $e_3$ 		& $e_5$  	 & $e_5$		&	$e_4$ 	 & $e_7$\\
$e_2$ & $e_3$ & $e_3$ & $e_8$			&	$e_6$ 	 & $e_4$ 		& $e_9$ 	 & $e_6$		&	$e_7$ 	 & $e_{10}$\\
$e_4$ & $e_6$ & $e_8$ & $e_{11}$	&	$e_{10}$ & $e_{10}$ & $e_{11}$ & $e_8$		&	$e_9$ 	 & $e_{11}$\\
$e_5$ & $e_7$ & $e_9$ & 	&	 & 	&  & 	&	 & \\
\hline
\end{tabular}

 &

\begin{tabular}{|ccc|ccc|cccc|}
\hline
$L_1$ &	$L_2$ &	$L_3$ & $L_4$ & $L_5$ &	$L_6$ &	$L_7$ & $L_8$ &	$L_9$ & $L_{10}$\\
\hline
$e_1$ & $e_1$ & $e_2$ & $e_1$			&	$e_2$ 	 & $e_3$ 		& $e_5$  	 & $e_5$		&	$e_7$ 	 & $e_4$\\
$e_2$ & $e_3$ & $e_3$ & $e_8$			&	$e_6$ 	 & $e_4$ 		& $e_9$ 	 & $e_6$		&	$e_9$ 	 & $e_7$\\
$e_4$ & $e_6$ & $e_8$ & $e_{11}$	&	$e_{10}$ & $e_{10}$ & $e_{11}$ & $e_8$		&	$e_{10}$ & $e_{11}$\\
$e_5$ & $e_7$ & $e_9$ & 	&	 & 	&  & 	&	 & \\
\hline
\end{tabular}
\\

 &  \\

The  arrangement 11.B.3.b.1.i. & The arrangement 11.B.3.b.1.ii.\\

 &  \\

\begin{tabular}{|ccc|ccc|ccc|c|}
\hline
$e_1$ & $e_1$ & $e_2$ & $e_1$			&	$e_2$ 	 & $e_3$ 		& $e_5$  	 & $e_5$		&	$e_7$ 	 & $e_4$\\
$e_2$ & $e_3$ & $e_3$ & $e_8$			&	$e_6$ 	 & $e_4$ 		& $e_9$ 	 & $e_7$		&	$e_9$ 	 & $e_6$\\
$e_4$ & $e_6$ & $e_8$ & $e_{11}$	&	$e_{10}$ & $e_{10}$ & $e_{11}$ & $e_8$		&	$e_{10}$ & $e_{11}$\\
$e_5$ & $e_7$ & $e_9$ & 	&	 & 	&  & 	&	 & \\
\hline
\end{tabular}

 &

\begin{tabular}{|ccc|ccc|ccc|c|}
\hline
$e_1$ & $e_1$ & $e_2$ & $e_1$			&	$e_2$ 	 & $e_3$ 		& $e_5$  	 & $e_5$		&	$e_7$ 	 & $e_4$\\
$e_2$ & $e_3$ & $e_3$ & $e_8$			&	$e_6$ 	 & $e_4$ 		& $e_9$ 	 & $e_7$		&	$e_{10}$ & $e_6$\\
$e_4$ & $e_6$ & $e_8$ & $e_{11}$	&	$e_{10}$ & $e_{10}$ & $e_{11}$ & $e_8$		&	$e_{11}$ & $e_9$\\
$e_5$ & $e_7$ & $e_9$ & 	&	 & 	&  & 	&	 & \\
\hline
\end{tabular}
\\

 &  \\
The {($*$)} arrangement 11.B.3.b.1.iii. & The  arrangement 11.B.3.b.1.iv.\\
 &  \\

\begin{tabular}{|ccc|ccc|c|ccc|}
\hline
$e_1$ & $e_1$ & $e_2$ & $e_1$			&	$e_2$ 	 & $e_3$ 		& $e_4$  	 & $e_5$		&	$e_5$ 	 & $e_7$\\
$e_2$ & $e_3$ & $e_3$ & $e_8$			&	$e_6$ 	 & $e_4$ 		& $e_9$ 	 & $e_7$		&	$e_6$ 	 & $e_{10}$\\
$e_4$ & $e_6$ & $e_8$ & $e_{11}$	&	$e_{10}$ & $e_{10}$ & $e_{11}$ & $e_{8}$	&	$e_{9}$  & $e_{11}$\\
$e_5$ & $e_7$ & $e_9$ & 	&	 & 	&  & 	&	 & \\
\hline
\end{tabular}

 &

\begin{tabular}{|ccc|ccc|c|ccc|}
\hline
$e_1$ & $e_1$ & $e_2$ & $e_1$			&	$e_2$ 	 & $e_3$ 		& $e_4$  	 & $e_5$		&	$e_5$ 	 & $e_7$\\
$e_2$ & $e_3$ & $e_3$ & $e_8$			&	$e_6$ 	 & $e_4$ 		& $e_9$ 	 & $e_7$		&	$e_6$ 	 & $e_{10}$\\
$e_4$ & $e_6$ & $e_8$ & $e_{11}$	&	$e_{10}$ & $e_{10}$ & $e_{11}$ & $e_{8}$	&	$e_{11}$  & $e_{9}$\\
$e_5$ & $e_7$ & $e_9$ & 	&	 & 	&  & 	&	 & \\
\hline
\end{tabular}
\\

 &  \\
The  arrangement 11.B.3.b.1.v. & The arrangement 11.B.3.b.1.vi.\\
 &  

\end{tabular}
}	
\caption{Arrangements with eleven triples whose distinguished three lines form a generic subarrangement where all three of the doubles are triples and whose next three lines form a generic subarrangement where 
 one of the three doubles is a triple.}
\label{tab:11generic3generic1a}
\end{table}

\begin{table}[htbp]
{ \begin{tabular}{cc}

\begin{tabular}{|ccc|ccc|c|ccc|}
\hline
$L_1$ &	$L_2$ &	$L_3$ & $L_4$ & $L_5$ &	$L_6$ &	$L_7$ & $L_8$ &	$L_9$ & $L_{10}$\\
\hline
$e_1$ & $e_1$ & $e_2$ & $e_1$			&	$e_2$ 	 & $e_3$ 		& $e_4$  	 & $e_5$		&	$e_5$ 	 & $e_7$\\
$e_2$ & $e_3$ & $e_3$ & $e_8$			&	$e_6$ 	 & $e_4$ 		& $e_9$ 	 & $e_7$		&	$e_6$ 	 & $e_{10}$\\
$e_4$ & $e_6$ & $e_8$ & $e_{11}$	&	$e_{10}$ & $e_{10}$ & $e_{11}$ & $e_{9}$	&	$e_{8}$  & $e_{11}$\\
$e_5$ & $e_7$ & $e_9$ & 	&	 & 	&  & 	&	 & \\
\hline
\end{tabular}

 &

\begin{tabular}{|ccc|ccc|c|ccc|}
\hline
$L_1$ &	$L_2$ &	$L_3$ & $L_4$ & $L_5$ &	$L_6$ &	$L_7$ & $L_8$ &	$L_9$ & $L_{10}$\\
\hline
$e_1$ & $e_1$ & $e_2$ & $e_1$			&	$e_2$ 	 & $e_3$ 		& $e_4$  	 & $e_5$		&	$e_5$ 	 & $e_7$\\
$e_2$ & $e_3$ & $e_3$ & $e_8$			&	$e_6$ 	 & $e_4$ 		& $e_9$ 	 & $e_7$		&	$e_6$ 	 & $e_{10}$\\
$e_4$ & $e_6$ & $e_8$ & $e_{11}$	&	$e_{10}$ & $e_{10}$ & $e_{11}$ & $e_{9}$	&	$e_{11}$  & $e_{8}$\\
$e_5$ & $e_7$ & $e_9$ & 	&	 & 	&  & 	&	 & \\
\hline
\end{tabular}
\\

 &  \\
The {non-geometric} arrangement 11.B.3.b.1.vii. & The  arrangement 11.B.3.b.1.viii.\\
 &  \\

\begin{tabular}{|ccc|ccc|c|ccc|}
\hline
$e_1$ & $e_1$ & $e_2$ & $e_1$			&	$e_2$ 	 & $e_3$ 		& $e_4$  	 & $e_5$		&	$e_5$ 	 & $e_7$\\
$e_2$ & $e_3$ & $e_3$ & $e_8$			&	$e_6$ 	 & $e_4$ 		& $e_9$ 	 & $e_7$		&	$e_6$ 	 & $e_{10}$\\
$e_4$ & $e_6$ & $e_8$ & $e_{11}$	&	$e_{10}$ & $e_{10}$ & $e_{11}$ & $e_{11}$	&	$e_{8}$  & $e_{9}$\\
$e_5$ & $e_7$ & $e_9$ & 	&	 & 	&  & 	&	 & \\
\hline
\end{tabular}

 &

\begin{tabular}{|ccc|ccc|c|ccc|}
\hline
$e_1$ & $e_1$ & $e_2$ & $e_1$			&	$e_2$ 	 & $e_3$ 		& $e_4$  	 & $e_5$		&	$e_5$ 	 & $e_7$\\
$e_2$ & $e_3$ & $e_3$ & $e_8$			&	$e_6$ 	 & $e_4$ 		& $e_9$ 	 & $e_7$		&	$e_6$ 	 & $e_{10}$\\
$e_4$ & $e_6$ & $e_8$ & $e_{11}$	&	$e_{10}$ & $e_{10}$ & $e_{11}$ & $e_{11}$	&	$e_{9}$  & $e_{8}$\\
$e_5$ & $e_7$ & $e_9$ & 	&	 & 	&  & 	&	 & \\
\hline
\end{tabular}
\\

 &  \\
The  arrangement 11.B.3.b.1.ix. & The {($*$)} arrangement 11.B.3.b.1.x. \\
 &  \\

\begin{tabular}{|ccc|ccc|cc|cc|}
\hline
$e_1$ & $e_1$ & $e_2$ & $e_1$			&	$e_2$ 	 & $e_3$ 		& $e_9$  	 & $e_5$		&	$e_5$ 	 & $e_7$\\
$e_2$ & $e_3$ & $e_3$ & $e_8$			&	$e_6$ 	 & $e_4$ 		& $e_{10}$ & $e_7$		&	$e_6$ 	 & $e_4$\\
$e_4$ & $e_6$ & $e_8$ & $e_{11}$	&	$e_{10}$ & $e_{10}$ & $e_{11}$ & $e_{8}$	&	$e_{9}$ & $e_{11}$\\
$e_5$ & $e_7$ & $e_9$ & 	&	 & 	&  & 	&	 & \\
\hline
\end{tabular}

 &

\begin{tabular}{|ccc|ccc|cc|cc|}
\hline
$e_1$ & $e_1$ & $e_2$ & $e_1$			&	$e_2$ 	 & $e_3$ 		& $e_9$  	 & $e_5$		&	$e_5$ 	 & $e_7$\\
$e_2$ & $e_3$ & $e_3$ & $e_8$			&	$e_6$ 	 & $e_4$ 		& $e_{10}$ & $e_7$		&	$e_6$ 	 & $e_4$\\
$e_4$ & $e_6$ & $e_8$ & $e_{11}$	&	$e_{10}$ & $e_{10}$ & $e_{11}$ & $e_{9}$	&	$e_{8}$ & $e_{11}$\\
$e_5$ & $e_7$ & $e_9$ & 	&	 & 	&  & 	&	 & \\
\hline
\end{tabular}
\\

 &  \\
The arrangement 11.B.3.b.1.xi. & The  arrangement 11.B.3.b.1.xii.\\
 &  \\

\begin{tabular}{|ccc|ccc|cc|cc|}
\hline
$e_1$ & $e_1$ & $e_2$ & $e_1$			&	$e_2$ 	 & $e_3$ 		& $e_9$  	 & $e_5$		&	$e_5$ 	 & $e_7$\\
$e_2$ & $e_3$ & $e_3$ & $e_8$			&	$e_6$ 	 & $e_4$ 		& $e_{10}$ & $e_7$		&	$e_6$ 	 & $e_4$\\
$e_4$ & $e_6$ & $e_8$ & $e_{11}$	&	$e_{10}$ & $e_{10}$ & $e_{11}$ & $e_{11}$	&	$e_{8}$ & $e_{9}$\\
$e_5$ & $e_7$ & $e_9$ & 	&	 & 	&  & 	&	 & \\
\hline
\end{tabular}

 & \\
 &  \\
The  arrangement 11.B.3.b.1.xiii. & \\
 &

\end{tabular}
}	
\caption{Arrangements with eleven triples whose distinguished three lines form a generic subarrangement where all three of the doubles are triples and whose next three lines form a generic subarrangement where 
 one of the three doubles is a triple (continued).}
\label{tab:11generic3generic1b}
\end{table}

\begin{table}[htbp]\renewcommand{\arraystretch}{2.25}
\begin{tabular}{|c||c|c|c|c|c|c|}
\hline
 \multicolumn{7}{|c|}{11.B.3.b.1.}\\
\hline
\hline
Arr. & $y=0,z,bz$ & $x=0,z,az$ & $y=Ax$ &	$y=Bx+z$ &	$y=C(x-z)$ & $y=D(x-z)+z$ \\
\hline
\hline
i. & $L_2,L_1,L_4$	& $L_9,L_3,L_7$	& $L_{10}$:  $\frac{b}{a}$ & $L_6$:  $-1$	&	{} &	$L_5$:  $\frac{b-1}{b(1-a)}$ \\
\hline
 & \multicolumn{6}{|c|}{with $L_8: y=\frac{b-1}{1-a}(x-z)+bz$ and satisfying $b-(1+a-a^2)=0$}\\
\hline
\hline
ii. & $L_2,L_1,L_4$	& $L_9,L_3,L_7$	& $L_{10}$:  $\frac{b}{a}$ & {} &	$L_6$:  $\frac{b}{a-b}$ &	$L_5$:  $\frac{a}{a-b)}$ \\
\hline
 & \multicolumn{6}{|c|}{with $L_8: y=\frac{b-1}{1-a}(x-az)+z$ and satisfying $a^2b-a-b^2+b=0^*$ }\\
\hline
\hline
iii. & $L_1,L_3,L_5$	& $L_8,L_2,L_9$	& $L_7$:  $\frac{1}{a}$ & $L_4$: $-1$ &	{} &	$L_6$:  $\frac{b-1}{a-1)}$ \\
\hline
 & \multicolumn{6}{|c|}{with $L_{10}: y=\frac{b(b-1)}{a-1}(x-z)+bz$ and satisfying $a-a^2b+b^2-1=0^*$}\\
\hline
\hline
iv. & $L_2,L_1,L_4$	& $L_{10},L_3,L_7$	& $L_5$:  $1$ & $L_6$: $-1$ &	{} &	{} \\
\hline
 & \multicolumn{6}{|c|}{with $L_8: y=\frac{b-1}{1-a}(x-az)+z$ and $L_9:  y=\frac{b(b-1)}{1-a}(x-az)+bz$}\\
 & \multicolumn{6}{|c|}{and satisfying $a(2b^2-1)-(b^2+b-1)=0$} \\
\hline
\hline
v. & $L_2,L_1,L_4$	& $L_9,L_3,L_7$	& $L_5$:  $1$ & $L_8$: $b-1$ &	$L_6$: $\frac{1}{a-1}$ &	{} \\
\hline
 & \multicolumn{6}{|c|}{with $L_{10}: y=\frac{b(1-b)}{a-ab-1}(x-\frac{1}{1-b}z)$ and satisfying $a(2b-1)-(b^2+b-1)=0$} \\
\hline
\hline
vi. & $L_4,L_1,L_2$	& $L_7,L_3,L_{10}$	& $L_9$:  $\frac{b}{a+b-1}$ & $L_6$: $b-1$ &	$L_8$: $\frac{b}{a-1}$ &	$L_5$: $\frac{a(b-1)}{a-1}$ \\
\hline
 & \multicolumn{6}{|c|}{satisfying $b(a)+(a^2-3a+1)=0$} \\
\hline
\hline
vii. & $L_2,L_1,L_4$	& $L_6,L_5,L_{10}$	& $L_3$:  $1$ & $L_7$:  $\frac{b-1}{a}$ &	$L_9$: $\frac{b}{b-1}$ &	{} \\
\hline
 & \multicolumn{6}{|c|}{with $L_8: y=\frac{1}{b-a}(x-az)$ and satisfying $b-1=0$, a contradiction } \\
\hline
\hline
viii. & $L_2,L_1,L_4$	& $L_6,L_5,L_{10}$	& $L_3$:  $1$ & {} &	$L_9$: $\frac{c}{a(c-1)}$ &	{} \\
\hline
 & \multicolumn{6}{|c|}{with $e_9:(c,c)$, and $L_7: y=\frac{c-1}{c}x+1$, $L_8: y=\frac{c}{c-a}(x-az)$} \\
 & \multicolumn{6}{|c|}{and satisfying $a-b=0$ and $a^2c^2-2a^2c+a^2-ac^2+2c^2-c=0^*$ } \\
\hline
\end{tabular}
\caption{Equations for arrangements 11.B.3.b.1..}
\label{tab:11.B.3.b.1.EQNS1}
\end{table}

\begin{table}[htbp]\renewcommand{\arraystretch}{2.25}
\begin{tabular}{|c||c|c|c|c|c|c|}
\hline
 \multicolumn{7}{|c|}{11.B.3.b.1. continued }\\
\hline
\hline
Arr. & $y=0,z,bz$ & $x=0,z,az$ & $y=Ax$ &	$y=Bx+z$ &	$y=C(x-z)$ & $y=D(x-z)+z$ \\
\hline
\hline
ix. & $L_1,L_4,L_2$	& $L_3,L_7,L_{10}$	& $L_5$:  $\frac{b(1-a)}{a}$ & $L_9$: $\frac{b-1}{a-b}$ &	$L_6$: $-b$ &	$L_8$: $\frac{b-1}{a-1}$ \\
\hline
 & \multicolumn{6}{|c|}{satisfying $b(a-1)-(a^2)=0$ } \\
\hline
\hline
x. & $L_2,L_1,L_4$	& $L_6,L_5,L_{10}$	& $L_3$:  $1$ & $L_7$: $\frac{c-1}{c}$ &	$L_9$: $\frac{c}{c-1}$ &	{} \\
\hline
 & \multicolumn{6}{|c|}{with $e_9:(c,c)$, and $L_8: y=\frac{c}{2c-1-ac}(x-az)$} \\
 & \multicolumn{6}{|c|}{and satisfying $a-b=0$ and $c^2+a^2c+2ac^2-a^2c^2-4ac+a=0^*$ } \\
\hline
\hline
xi. & $L_2,L_1,L_4$	& $L_9,L_3,L_7$	& $L_5$:  $1$ & $L_8$: $b-1$ &	$L_6$: $\frac{a}{a-1}$ &	{} \\
\hline
 & \multicolumn{6}{|c|}{with $L_{10}: y=\frac{b(1-b)}{a-ab-1}(x-az)+bz$ and satisfying $b(1-2a)+(a^2-a)=0$} \\
\hline
\hline
xii. & $L_2,L_1,L_4$	& $L_8,L_3,L_7$	& $L_{10}$:  $\frac{b}{a}$ & $L_9$: $b-1$ &	$L_6$: $\frac{b}{a-b}$ &	$L_5$: $\frac{b-1}{b}$ \\
\hline
 & \multicolumn{6}{|c|}{satisfying $b(1-2a)+(a^2-a)=0$} \\
\hline
\hline
xiii. & $L_2,L_1,L_4$	& $L_{10},L_3,L_7$	& $L_8$:  $\frac{b}{a}$ & $L_6$: $-1$ &	{} &	$L_5$: $\frac{a}{1-a}$ \\
\hline
 & \multicolumn{6}{|c|}{with $L_9: y=\frac{b(1-b)}{a-b}(x-z)+bz$ and satisfying $b(1-2a)+(a^2+a-1)=0$} \\
\hline
\end{tabular}
\caption{Equations for arrangements 11.B.3.b.1..}
\label{tab:11.B.3.b.1.EQNS2}
\end{table}


This completes the proof for ten lines with eleven triples.
\end{proof}

\bibliographystyle{amsalpha}

\end{document}